\definecolor{darkgreen}{RGB}{0,77,0}
\pgfplotsset{compat=1.12}
\definecolor{red}{rgb}{1,0,0}
\definecolor{blu}{rgb}{0,0,0.8}
\definecolor{gre}{rgb}{0,0.7,0}
\newtheorem{thm}{Theorem}%
\newtheorem{prop}{Proposition}%
\theoremstyle{definition}
\theoremstyle{remark}
\theoremstyle{plain}
\newcommand{\N}{\mathbb N}
\newcommand{\R}{\mathbb R}
\newcommand{\Z}{\mathbb Z}
\newcommand{\Q}{\mathbb Q}
\newcommand{\rar}{\rightarrow}
\newcommand{\mc}{\mathcal}
\def\NN{{\mathbb N}}
\def\QQ{{\mathbb Q}}
\def\RR{{\mathbb R}}
\def\TT{{\mathbb T}}
\def\ZZ{{\mathbb Z}}
\def\SS{{\mathbb S}}
\def\Z{{\mathbb Z}}
\def\scrA{{\mathcal A}}
\def\scrC{{\mathcal C}}
\def\scrD{{\mathcal D}}
\def\scrF{{\mathcal F}}
\def\scrG{{\mathcal G}}
\def\scrH{{\mathcal H}}
\def\scrL{{\mathcal L}}
\def\scrM{{\mathcal M}}
\def\scrQ{{\mathcal Q}}
\def\scrS{{\mathcal S}}
\def\scrU{{\mathcal U}}
\def\e{\mathrm{e}}
\def\cl{{\operatorname{cl}}}
\def\SL{\operatorname{SL}}
\def\SO{\operatorname{SO}}
\def\GamG{\Gamma\backslash G}
\numberwithin{equation}{section}
\begin{document}

\title{A five distance theorem for Kronecker sequences}
\author{Alan Haynes, Jens Marklof}

\thanks{AH: Research supported by NSF grant DMS 2001248.\\
\phantom{A..}JM: Research supported by EPSRC grant EP/S024948/1.\\
\phantom{A..}MSC 2020: 11J71, 37A44.}
\date{14 September 2020/30 June 2021}

\keywords{Steinhaus problem, three gap theorem, homogeneous dynamics}

\begin{abstract}
The three distance theorem (also known as the three gap theorem or Steinhaus problem) states that, for any given real number $\alpha$ and integer $N$, there are at most three values for the distances between consecutive elements of the Kronecker sequence $\alpha, 2\alpha,\ldots, N\alpha$ mod 1. In this paper we consider a natural generalisation of the three distance theorem to the higher dimensional Kronecker sequence $\vec\alpha, 2\vec\alpha,\ldots, N\vec\alpha$ modulo an integer lattice. We prove that in two dimensions there are at most five values that can arise as a distance between nearest neighbors, for all choices of $\vec\alpha$ and $N$. Furthermore, for almost every $\vec\alpha$, five distinct distances indeed appear for infinitely many $N$ and hence five is the best possible general upper bound. In higher dimensions we have similar explicit, but less precise, upper bounds. For instance in three dimensions our bound is 13, though we conjecture the truth to be 9. We furthermore study the number of  possible distances from a point to its nearest neighbor in a restricted cone of directions. This may be viewed as a generalisation of the gap length in one dimension. For large cone angles we use geometric arguments to produce explicit bounds directly analogous to the three distance theorem. For small cone angles we use ergodic theory of homogeneous flows in the space of unimodular lattices to show that the number of distinct lengths is (a) unbounded for almost all $\vec\alpha$ and (b) bounded for $\vec\alpha$ that satisfy certain Diophantine conditions.
\end{abstract}

\maketitle




\section{Introduction}\label{sec:Results}

Consider a finite set $S_N$ comprising $N$ distinct points $\xi_1,\ldots,\xi_N$ on the unit torus $\TT=\R/\Z$. The points in $S_N$ partition $\TT$ into $N$ intervals, representing the {\em gaps} of $S_N$. We denote by $\delta_{n,N}$ the size of the $n$th gap, i.e., the distance between $\xi_n$ and its nearest neighbor {\em to the right}. We denote by $g_N=|\{\delta_{n,N} \mid 1\leq n\leq N \}|$ the number of distinct gap sizes. For a generic choice of $S_N$ one has $g_N=N$, since all gap lengths are generically distinct. A striking observation, known as the three distance (or three gap) theorem, is that for the Kronecker sequence $\xi_n=n\alpha+\Z$, one has $g_N\leq 3$, for any $\alpha\in\RR$ and $N\in\NN$; see \cite{Sos1957,Sos1958,Sura1958,Swie1959} for the original proofs and \cite{FrieSos1992,HaynKoivSaduWalt2016,Lang1991,MarkStro2017,Rave1988,Slat1950,Slat1967} for alternative approaches. Natural extensions to return maps for billiards in rectangles and interval exchange transformations are discussed in \cite{Don2012,Flor2008,Flor2009,FlorFlor2009} and \cite{Taha2017}, respectively.
There are various generalisations of the three gap theorem to higher dimensions, several of which require Diophantine conditions on the choice of parameter. In the present paper we discuss natural extensions of the three distance theorem to higher dimensional Kronecker sequences, which represent translations of a multidimensional torus by a vector $\vec\alpha$. We will here consider nearest neighbor distances as well as distances to neighbors in restricted directions. The former may be viewed as a special case of the setting of Biringer and Schmidt \cite{BiriSchm2008}, who considered the number of distinct nearest neighbor distances for an orbit generated by an isometry of a general Riemannian manifold. If distances are measured by the maximum norm rather than a Riemannian metric, Chevallier \cite[Corollaire 1.2]{Chev1996} showed that there are at most five distinct distances for Kronecker sequences on two-dimensional tori. Related studies in this context include the papers by Chevallier \cite{Chev1997,Chev2000,Chev2014} and by Vijay \cite{Vija2008}. For other higher dimensional variants of the three distance theorem, see \cite{BertKim2018,BlehHommJiRoedShen2012,Bosh1991,Bosh1992,ChuGra1976,CobeGrozVajaZaha2002,Dyson,FraeHolz1995,GeelSimp1993,HaynKoivSaduWalt2016,HaynMar2020, HaynRoed2020,Liang1979}.

Our setting is as follows. Let $\scrL$ be a unimodular lattice in $\RR^d$ (one example to keep in mind is $\Z^d$) and consider a point set $S_N=\{\xi_1,\ldots,\xi_N\}$ on the $d$-dimensional torus $\TT^d=\R^d/\scrL$. The point set $S_N$ that we are interested in is the $d$-dimensional Kronecker sequence,
\begin{equation}
S_N=S_N(\vec\alpha,\scrL)=\{\xi_n=n\vec\alpha +\scrL \mid 1\le n\le N\}\subset\TT^d,
\end{equation}
for given $\vec\alpha\in\RR^d$. Note that the $\xi_n$ are not necessarily distinct if $\vec\alpha\in\QQ\scrL$; in this case we remove all multiple occurrences, and as a result the number of elements in $S_N$ remains bounded as $N\to\infty$.

Let $\delta_{n,N}$ be the distance of $\xi_n$ to its nearest neighbor with respect to the standard flat Riemannian metric on $\TT^d$. As above, $g_N=g_N(\vec\alpha,\scrL)$ denotes the number of distinct nearest neighbor distances $\delta_{n,N}$. Biringer and Schmidt \cite{BiriSchm2008} proved that, for all choices of $\scrL, \vec{\alpha},$ and $N$,
\begin{equation}
	g_N(\vec\alpha,\scrL)\le 3^d+1.
\end{equation}
Our first theorem improves this bound, and also gives the best possible result in dimension $d=2$.
\begin{thm}\label{thm.HighDBoundedGaps}
For every unimodular lattice $\scrL$, $\vec{\alpha}\in\R^d$ and $N\in\N$ we have that
\begin{equation}
g_N(\vec\alpha,\scrL) \le 
\begin{cases}
5&\text{if}~d=2,\\ 
\sigma_d+1&\text{if}~d\ge 3,
\end{cases}
\end{equation}
where $\sigma_d$ is the kissing number for $\R^d$.
\end{thm}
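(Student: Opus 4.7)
I will reduce the theorem to a packing problem and then invoke the kissing-number bound.

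\emph{Reformulation.} For any $1\le n\le N$, the distance from $\xi_n=n\vec\alpha+\scrL$ to $\xi_m=m\vec\alpha+\scrL$ in the flat metric on $\TT^d$ is $\rho(n-m)$, where $\rho(k):=\min_{\lambda\in\scrL}\|k\vec\alpha-\lambda\|$. Hence $\delta_{n,N}=\min\{\rho(k):k\in\{1-n,\ldots,N-n\}\setminus\{0\}\}$, and since $\rho(-k)=\rho(k)$,
\[
\delta_{n,N}=r_{M_n},\quad M_n:=\max(n-1,N-n),\quad r_k:=\min_{1\le j\le k}\rho(j).
\]
A direct check shows that $M_n$ takes every integer value in $[M,N-1]$ with $M:=\lceil(N-1)/2\rceil$ as $n$ varies over $\{1,\ldots,N\}$. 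Since $r_k$ is non-increasing,
\[
g_N(\vec\alpha,\scrL)=|\{r_k:k\in[M,N-1]\}|=r+1,
\]
where $r$ counts the indices $M<k_1<k_2<\cdots<k_r\le N-1$ at which $r_k$ strictly drops (the ``best approximations'').

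\emph{Packing reduction.} For each $i$ pick $\lambda_i\in\scrL$ realising $\rho(k_i)=\|k_i\vec\alpha-\lambda_i\|$, set $v_i:=k_i\vec\alpha-\lambda_i$, and let $R:=\rho(k_1)$; then $\|v_1\|=R$, $\|v_i\|<R$ for $i\ge 2$, and each $v_i\ne 0$. For $1\le i<j\le r$ one has $0<k_j-k_i\le(N-1)-k_1<k_1$, the last inequality following from $k_1>M\ge(N-1)/2$. The best-approximation property of $k_1$ then gives $\rho(k_j-k_i)\ge R$, and because $v_j-v_i$ is a Euclidean representative of $(k_j-k_i)\vec\alpha\bmod\scrL$, we obtain $\|v_j-v_i\|\ge\rho(k_j-k_i)\ge R$. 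Thus $v_1,\ldots,v_r$ are $r$ nonzero vectors in $\overline{B(0,R)}\subset\RR^d$ with pairwise Euclidean distance at least $R$.

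\emph{General bound via kissing.} The cosine rule in the triangle $\{0,v_i,v_j\}$ gives
\[
\cos\angle(v_i,v_j)\le\frac{\|v_i\|^2+\|v_j\|^2-R^2}{2\|v_i\|\|v_j\|}\le\tfrac12,
\]
so the $r$ distinct unit directions $\hat v_i:=v_i/\|v_i\|$ have pairwise angles $\ge60^\circ$. By the definition of the kissing number this forces $r\le\sigma_d$, whence $g_N\le\sigma_d+1$ and the claim is proved for $d\ge3$.

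\emph{Improvement in dimension two.} The kissing bound only yields $g_N\le 7$ when $d=2$. Even using the \emph{strict} cosine inequality (valid for every pair, since only $v_1$ has $\|v_1\|=R$) one gets at best $r\le5$, hence $g_N\le 6$: a near-regular pentagon in $\RR^2$ remains compatible with pure packing. Pushing to $r\le 4$ and $g_N\le5$ is the main obstacle. The plan is to exploit the additional arithmetic rigidity that a pure packing argument discards: every $v_i-v_j$ is itself a coset representative of $(k_i-k_j)\vec\alpha$ in $\scrL$, the norms $\|v_i\|$ are \emph{strictly} decreasing, and the last best approximation $k_0\le M$ supplies an auxiliary vector $v_0:=k_0\vec\alpha-\lambda_0$ with $\|v_0\|>R$ together with further distance constraints linking it to $v_1,\ldots,v_r$. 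A planar case analysis of the configuration $\{v_0,v_1,\ldots,v_r\}\subset\RR^2$ — likely via its convex hull or its Voronoi/Delaunay structure with respect to $\scrL$ — is expected to exclude the pentagonal arrangement and force $r\le 4$. This concrete two-dimensional geometric step is where the principal technical work lies.
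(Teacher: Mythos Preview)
Your argument for $d\ge 3$ is correct and is essentially the paper's proof with the $\SL(d+1,\RR)$ scaffolding removed: your best-approximation vectors $v_i$ play exactly the role of the paper's $\vec v_i$ (with the index convention reversed), the pairwise-angle bound is the same cosine computation, and the kissing-number conclusion is identical. One minor slip: the nearest neighbour of $\xi_n$ on $\TT^d$ may be $\xi_n$ itself via a nonzero $\ell\in\scrL$, so strictly $\delta_{n,N}=\min(\ell_0,r_{M_n})$ with $\ell_0$ the length of the shortest nonzero vector of $\scrL$. This is harmless, since capping by $\ell_0$ cannot enlarge the set of distinct values.

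For $d=2$, however, you have only a plan, and it points in the wrong direction. Neither an auxiliary $v_0$ with $k_0\le M$ nor a Voronoi/Delaunay analysis is needed; what you are missing is a \emph{three-term} constraint that your own setup already provides. For any $1\le a<b<c\le r$ one has $0<k_a+k_b-k_c<k_a$ (because each $k_i>(N-1)/2$), whence
\[
\|v_a+v_b-v_c\|\;\ge\;\rho(k_a+k_b-k_c)\;\ge\; r_{k_a-1}\;>\;\|v_a\|.
\]
A short planar argument converts this into: the shortest of any triple $v_a,v_b,v_c$ cannot lie in the positive cone spanned by the other two. The contradiction with $r\ge 5$ is then purely combinatorial. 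Among five planar vectors with all pairwise angles strictly greater than $\pi/3$, each vector lies in the positive cone of its two angular neighbours (the three remaining consecutive gaps sum to more than $\pi$, so the two adjacent gaps sum to less than $\pi$); applied to the shortest vector $v_r$, whose angular neighbours are both longer, this violates the cone exclusion. Hence $r\le 4$ and $g_N\le 5$. This is exactly the paper's route in Section~\ref{sec.2DBddPfPart2}, and it requires nothing beyond the arithmetic data you have already extracted.
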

Recall that the \textit{kissing number} $\sigma_d$ for $\R^d$ is the maximum number of non-overlapping spheres of radius one in $\R^d$ which can be arranged so that they all touch the unit sphere in exactly one point. The study of kissing numbers has a long and interesting history, and is connected to many areas of mathematics (see \cite{BoyvDoduMusi2012} and \cite{Musi2008} for surveys of results). It is interesting to note that the three distance theorem in dimension $d=1$ is compatible with the bound $g_N\leq \sigma_1+1=3$, but that already in dimension $d=2$ this becomes suboptimal, since here $\sigma_2+1=7>5$. A table of known bounds for kissing numbers in dimensions $d\le 24$ is provided in Figure \ref{fig.KissNums}. Our corresponding bounds for $g_N$ in dimensions $d=3,\ldots,10$ are therefore 13, 25, 46, 79, 135, 241, 365, 555. We do not claim that these bounds are optimal for any dimension $d\geq 3$. In particular, we conjecture that $g_N(\vec\alpha,\scrL) \le 9$ for $d=3$. This conjecture is based on numerical experiments by Dettmann \cite{Dett2021} that produced no more than 9 distinct gaps. Figures \ref{fig.5Gaps1} and \ref{fig.3Dgaps1} show examples where 5 (for $d=2$) and 7 (for $d=3$) distinct distances are obtained.

For general $d$, it follows from Theorem \ref{thm.HighDBoundedGaps} together with an estimate for $\sigma_d$ due to Kabatiansky and Levenshtein (see \cite[Theorem 4, Corollary 1]{KabaLeve1978} or \cite[Chapter 9]{ConwSloa1999}) that
\begin{equation}
g_N(\vec\alpha,\scrL)\le 2^{0.401d(1+o(d))}\quad\text{as}\quad d\rar\infty.
\end{equation}
The rate of convergence of the $o(d)$ term in this estimate can be made more precise and explicit (non-asymptotic) upper bounds can also be obtained by applying \cite[Equation (52)]{KabaLeve1978}.

\begin{figure}\label{fig.KissNums}
\centering
\begin{tabular}{|c|c|}
	\hline
	\hspace*{6bp}$d$\hspace*{6bp}& Upper bound   \\
	& for $\sigma_d$\\
	\hline
	1 & \textbf{2} \\
	\hline
	2 & \textbf{6} \\
	\hline
	3 & \textbf{12} \\
	\hline
	4 & \textbf{24} \\
	\hline
	5 & 45 \\
	\hline
	6 & 78 \\
	\hline
	7 & 134 \\
	\hline
	8 & \textbf{240} \\
	\hline
	9 & 364 \\
	\hline
	10 & 554 \\
	\hline
	11 & 870 \\
	\hline
	12 & 1357 \\
\hline
\end{tabular}
\begin{tabular}{|c|c|}
	\hline
	\hspace*{6bp}$d$\hspace*{6bp}& Upper bound   \\
	& for $\sigma_d$\\	
	\hline
	13 & 2069 \\
	\hline
	14 & 3183 \\
	\hline
	15 & 4866 \\
	\hline
	16 & 7355 \\
	\hline
	17 & 11072 \\
	\hline
	18 & 16572 \\
	\hline
	19 & 24812 \\
	\hline
	20 & 36764 \\
	\hline
	21 & 54584 \\
	\hline
	22 & 82340 \\
	\hline
	23 & 124416 \\
	\hline
	24 & \textbf{196560} \\
	\hline
\end{tabular}
	\caption{A table of known bounds for kissing numbers. All bounds are taken from \cite{BoyvDoduMusi2012}, and bounds listed in bold face are known to be best possible.}
\end{figure}


Values of $\vec\alpha$ for which $g_N(\vec\alpha,\scrL)= 5$ are surprisingly rare. Our first computer search, which took 1000 randomly and uniformly selected numbers $\vec\alpha\in [0,1)^2$ and checked $g_N(\vec\alpha,\ZZ^2)$ for all $N\le 10^4$, found only five values of $\vec\alpha$ for which there was an $N$ with $g_N(\vec\alpha,\ZZ^2)=5$. 

Nevertheless, as we will now see, every numerical example gives rise to a lower bound for an infinite sequence of $N$ and for almost every $\vec\alpha$. In what follows, we say a sequence $N_1< N_2 < N_3 <\ldots$ of integers is {\em sub-exponential} if
\begin{equation}\lim_{i\to\infty} \frac{N_{i+1}}{N_i}=1 .\end{equation}

\begin{thm}\label{prop:BestPoss}
Let $\scrL$ and $\scrL_0$ be unimodular lattices. There is a $P\subset\RR^d$ of full Lebesgue measure, such that for every $\vec\alpha\in P$, $\vec\alpha_0\in\RR^d$, and for every sub-exponential sequence $(N_i)_i$, we have
	\begin{equation}
	\limsup_{i\rar\infty} g_{N_i}(\vec\alpha,\scrL) \geq \sup_{N\in\NN} g_{N}(\vec\alpha_0,\scrL_0) .
	\end{equation}
\end{thm}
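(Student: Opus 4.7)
The plan is to translate the question into a density statement for a diagonal-flow orbit on the space of unimodular lattices in $\RR^{d+1}$. To each triple $(\vec\alpha,\scrL,N)$ assign the unimodular lattice $\Lambda(\vec\alpha,\scrL,N)\subset\RR^{d+1}$ generated by the row vectors $(N^{1/d}\vec\ell_i,0)$, where $(\vec\ell_i)$ is a basis of $\scrL$, and $(N^{1/d}\vec\alpha,1/N)$. The slice $\{(\vec v,t)\in\Lambda:0<t\leq 1\}$ projected onto the first $d$ coordinates, reduced modulo $N^{1/d}\scrL$, is precisely the rescaled Kronecker set $N^{1/d}S_N(\vec\alpha,\scrL)$, whose nearest-neighbour gap count equals $g_N(\vec\alpha,\scrL)$. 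Setting $t=\log N$ gives the equivariance $\Lambda(\vec\alpha,\scrL,N)=\Lambda(\vec\alpha,\scrL,1)\cdot g_t$ with $g_t=\diag(e^{t/d},\ldots,e^{t/d},e^{-t})\in\SL(d+1,\RR)$, so varying $N$ over $\NN$ samples a single trajectory of the right action of $g_t$ on $X_{d+1}:=\SL(d+1,\ZZ)\backslash\SL(d+1,\RR)$.

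Next, define $\tilde g:X_{d+1}\to\NN$ to be the number of distinct nearest-neighbour distances in the sliced configuration, so that $\tilde g(\Lambda(\vec\alpha,\scrL,N))=g_N(\vec\alpha,\scrL)$. The function $\tilde g$ is lower semicontinuous: if $k$ distinct gap lengths appear at $\Lambda_0$, they are separated by some $\varepsilon>0$, and since Euclidean distances between lattice-produced points depend continuously on the lattice, the same $k$ values remain pairwise distinct on an open neighbourhood of $\Lambda_0$.

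For the dynamical input, the parametrisation $\vec\alpha\mapsto\Lambda(\vec\alpha,\scrL,1)$ is the orbit map of $\Lambda(\vecnull,\scrL,1)$ under the unstable horospherical subgroup $U=\{u_{\vec\alpha}=\bigl(\begin{smallmatrix}I&0\\ \vec\alpha&1\end{smallmatrix}\bigr):\vec\alpha\in\RR^d\}$ of $g_t$, and identifies Lebesgue measure on $\RR^d$ with the Haar measure on $U$. Moore's theorem gives mixing of $g_t$ on $X_{d+1}$, and the classical equidistribution of expanding unstable horospherical orbits then produces a full-measure set $P\subset\RR^d$ such that for each $\vec\alpha\in P$ the continuous orbit $\{\Lambda(\vec\alpha,\scrL,1)\cdot g_t:t\geq 0\}$ equidistributes in $X_{d+1}$, and in particular is dense. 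Given any sub-exponential sequence $(N_i)$, the log-times $t_i=\log N_i$ diverge with $t_{i+1}-t_i\to 0$; by a short continuity argument (for an open target $W\subset X_{d+1}$, a compactly contained $W'\subset\subset W$ guarantees a uniform dwell-time $\delta>0$ of the orbit in $W$ after each visit to $W'$, which eventually exceeds $t_{i+1}-t_i$), the sampled orbit $\{\Lambda(\vec\alpha,\scrL,N_i):i\in\NN\}$ also visits every non-empty open subset of $X_{d+1}$ infinitely often.

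To conclude, fix $\vec\alpha_0\in\RR^d$ and a unimodular lattice $\scrL_0$. By Theorem~\ref{thm.HighDBoundedGaps} the integer $k:=\sup_{N\in\NN}g_N(\vec\alpha_0,\scrL_0)$ is finite and hence attained at some $N_0$. Lower semicontinuity furnishes an open neighbourhood $W$ of $\Lambda(\vec\alpha_0,\scrL_0,N_0)\in X_{d+1}$ on which $\tilde g\geq k$; infinitely many of the $\Lambda(\vec\alpha,\scrL,N_i)$ lie in $W$, forcing $g_{N_i}(\vec\alpha,\scrL)\geq k$ infinitely often and hence $\limsup_i g_{N_i}(\vec\alpha,\scrL)\geq k$, as required. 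The technical heart of the argument is the third paragraph: transferring ergodicity of $g_t$ on $X_{d+1}$ (which only gives a conclusion for Haar-a.e.\ starting lattice) to Lebesgue-a.e.\ $\vec\alpha$ along a single $d$-dimensional slice. This relies on the interplay between mixing of $g_t$ and the absolute continuity of the $U$-orbit parametrisation, and is what forces the full-measure (rather than every-$\vec\alpha$) nature of the conclusion.
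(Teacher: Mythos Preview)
Your overall strategy coincides with the paper's: encode $g_N(\vec\alpha,\scrL)$ via a unimodular lattice in $\RR^{d+1}$, invoke density of the $g_t$-orbit in $\SL(d+1,\ZZ)\backslash\SL(d+1,\RR)$ for Lebesgue-a.e.\ $\vec\alpha$ (together with the sub-exponential sampling argument), and transfer the gap count from the target lattice to nearby orbit points by a continuity/semicontinuity argument. The density paragraph is fine and matches the paper's reference to \cite{HaynMar2020}.

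The gap is in your second paragraph. Your function $\tilde g$ counts distinct nearest-neighbour distances in the projection of the slab $\Lambda\cap(\RR^d\times(0,1])$, and you assert lower semicontinuity on the grounds that ``Euclidean distances between lattice-produced points depend continuously on the lattice''. But the slab has boundary, and the target lattice $\Lambda(\vec\alpha_0,\scrL_0,N_0)$ has lattice points \emph{exactly} on that boundary (at $t=0$ and $t=1$). Under perturbation, points can enter or leave the slab, so the sliced configuration---and hence the nearest-neighbour distances---does not vary continuously; a point whose nearest neighbour sits at $t=1$ can have its distance jump when that neighbour exits, potentially collapsing two previously distinct values. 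Your one-line justification does not address this, and in fact $\tilde g$ is not lower semicontinuous at such boundary-touching lattices without further argument. The paper handles exactly this issue by working not with a single slab function but with the two-variable function $F_\scrD(M,t)$ on $\GamG\times(0,1)$: Proposition~\ref{prop:fiver} shows that $t\mapsto F_\scrD(A_{N_+}(\vec\alpha_0),t)$ is piecewise constant on intervals $I_n$, and that $F_\scrD$ is jointly continuous at $(A_{N_+}(\vec\alpha_0),t_n)$ for $t_n\in I_n$ chosen to avoid the discrete bad set $N_+^{-1}\ZZ$. One then uses that for large $\tilde N$ the sample points $m/\tilde N_+$ land in each $(t_n-\eta,t_n+\eta)$, forcing $\scrG_{\scrD,\tilde N}(M)\ge g_N(\vec\alpha_0,\scrL_0)$ on a neighbourhood of the target. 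This is precisely the careful version of the semicontinuity step that your sketch is missing; the use of $N_+=N+\tfrac12$ rather than $N$ is also part of this, as it pushes the relevant lattice points off the slab boundary.
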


\begin{figure}
	\centering
	\def\svgwidth{0.49\columnwidth}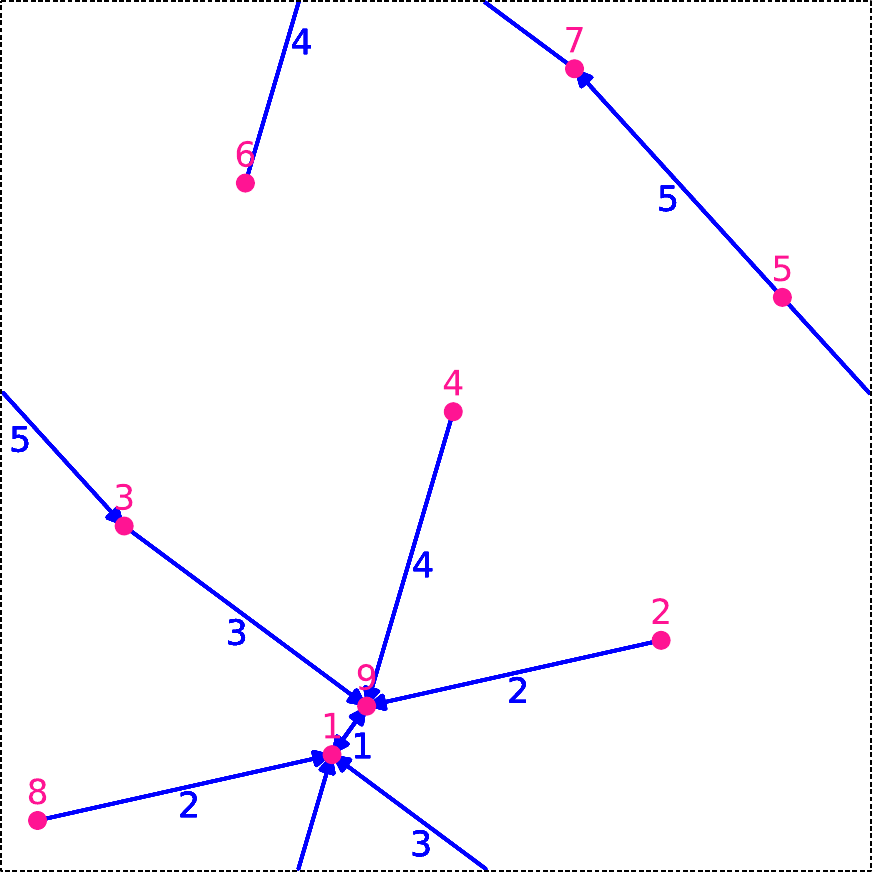
	\def\svgwidth{0.49\columnwidth}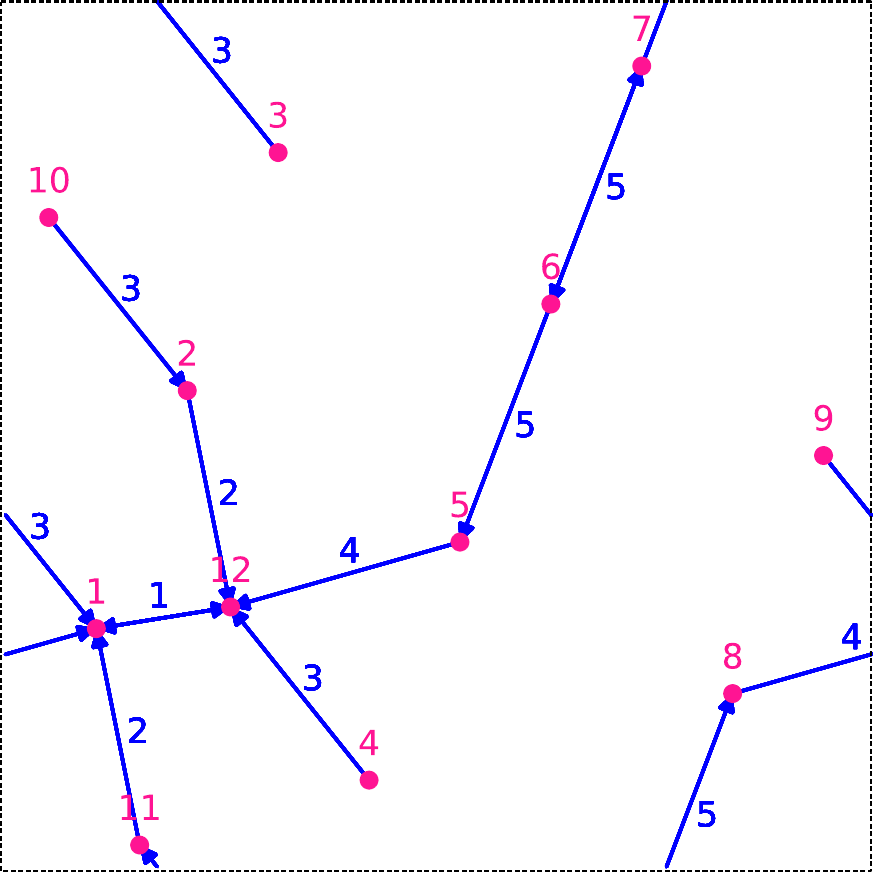\\
	{\footnotesize $\vec\alpha=(0.38,0.132)$, $N=9$ \hspace*{110bp} $\vec\alpha=(0.105,0.275)$, $N=12$}
	\caption{The nearest neighbor graph for the Kronecker sequence $n\vec\alpha+\ZZ^2$ ($n=1,\ldots,N$) in the torus $\RR^2/\ZZ^2$ (the unit square with opposite sides identified), for $N=9$ (left) and $N=12$ (right) and different choices of $\vec\alpha$. The vertex representing $n\vec\alpha+\ZZ^2$ is labeled by $n$ and colored in pink. The blue directed edges point from a vertex to its nearest neighbour(s). The blue edge labels correspond to the indices of each of the five distinct distances. Note that in each example there is a vertex with two nearest neighbours: vertex $n=5$ on the left and $n=6$ on the right.}\label{fig.5Gaps1}
\end{figure}

In dimension $d=2$, the choice 
	$\vec\alpha_0=\left(\tfrac{19}{50},\tfrac{33}{250}\right)$, $N=9$ and $\scrL_0=\ZZ^2$
	(this is the example in Figure \ref{fig.5Gaps1}, left) produces precisely five distinct distances given by
	\begin{align*}
		&\delta_{1,N}=\tfrac{\sqrt{74}}{125}  \approx 0.0688, \quad 
		\delta_{2,N}=\tfrac{\sqrt{\frac{3793}{2}}}{125} \approx  0.3484 ,\quad 
		\delta_{3,N}=\tfrac{\sqrt{1901}}{125}  \approx 0.3488, \\
		&\hspace*{55bp}\delta_{4,N}=\tfrac{\sqrt{\frac{157}{2}}}{25}  \approx  0.3544,\quad 
		\delta_{5,N}=\tfrac{3 \sqrt{221}}{125} \approx  0.3568.
	\end{align*}
Moreover, for $d= 3$, $\vec\alpha_0=(\tfrac{46}{125}, \tfrac{107}{500}, \tfrac{43}{500})$, $N=15$ and $\scrL_0=\ZZ^3$ (cf.~Figure \ref{fig.3Dgaps1}), we have seven distinct distances,
\begin{align*}
&\delta_{1,N}=\tfrac{17\sqrt{\frac{7}{2}}}{125}\approx 0.2544,\quad
\delta_{2,N}=\tfrac{\sqrt{\frac{13513}{2}}}{250}\approx 0.3288,\quad 
\delta_{4,N}=\tfrac{21\sqrt{\frac{37}{2}}}{250}\approx 0.3613, \\
&\delta_{5,N}=\tfrac{\sqrt{\frac{177}{2}}}{25}\approx 0.3763,\quad
\delta_{6,N}=\tfrac{\sqrt{\frac{19237}{2}}}{250}\approx 0.3923,\quad 
\delta_{7,N}=\tfrac{\sqrt{2866}}{125}\approx 0.4283,\\
&\hspace*{127bp}\delta_{8,N}=\tfrac{\sqrt{\frac{23577}{2}}}{250}\approx 0.4343.
\end{align*}
Applying these data to Theorem \ref{prop:BestPoss}, combined with the upper bound of 5 in Theorem \ref{thm.HighDBoundedGaps}, we immediately obtain the following result.

\begin{thm}\label{thm.2DNearNeigBestPoss}
Let $d=2$ or $3$. For any unimodular lattice $\scrL$ in $\R^d$, there is a set $P\subset\RR^d$ of full Lebesgue measure, such that for every $\vec\alpha\in P$, and for every sub-exponential sequence $(N_i)_i$, we have that
	\begin{equation}
	\limsup_{i\rar\infty} g_{N_i}(\vec\alpha,\scrL)
	\begin{cases}
	= 5 & \text{if $d=2$,}\\
	\geq 7 & \text{if $d= 3$}.
	\end{cases}
	\end{equation}
\end{thm}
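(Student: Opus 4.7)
The plan is to deduce Theorem \ref{thm.2DNearNeigBestPoss} directly from Theorem \ref{prop:BestPoss}, instantiated with the explicit numerical configurations displayed immediately before the statement as the reference pair $(\vec\alpha_0,\scrL_0)$, and combined with the upper bound from Theorem \ref{thm.HighDBoundedGaps} in dimension two. The whole argument is a sandwich: Theorem \ref{prop:BestPoss} transfers a lower bound achieved at one rational reference point to an almost-sure lower bound along every sub-exponential sequence, while Theorem \ref{thm.HighDBoundedGaps} caps this lower bound from above when $d=2$.

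For $d=2$, I would take $\vec\alpha_0=(19/50,\,33/250)$, $\scrL_0=\ZZ^2$ and $N=9$. The five expressions $\delta_{1,9},\ldots,\delta_{5,9}$ listed above are pairwise distinct, so $\sup_{N\in\NN} g_N(\vec\alpha_0,\scrL_0)\ge g_9(\vec\alpha_0,\scrL_0)\ge 5$. Theorem \ref{prop:BestPoss} then supplies a set $P\subset\RR^2$ of full Lebesgue measure such that, for every $\vec\alpha\in P$ and every sub-exponential sequence $(N_i)_i$, $\limsup_{i\to\infty} g_{N_i}(\vec\alpha,\scrL)\ge 5$. Theorem \ref{thm.HighDBoundedGaps} provides the matching upper bound $g_{N_i}(\vec\alpha,\scrL)\le 5$, which forces equality. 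For $d=3$ I would run the same argument with $\vec\alpha_0=(46/125,\,107/500,\,43/500)$, $\scrL_0=\ZZ^3$ and $N=15$: the seven distances displayed above are pairwise distinct, so $\sup_{N\in\NN} g_N(\vec\alpha_0,\scrL_0)\ge 7$, and Theorem \ref{prop:BestPoss} yields $\limsup_{i\to\infty} g_{N_i}(\vec\alpha,\scrL)\ge 7$ for almost every $\vec\alpha\in\RR^3$. No upper bound is required in this case.

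The only nontrivial task is the verification of the two reference configurations, which is a finite computation and the main (modest) obstacle. For each $n\in\{1,\ldots,N\}$ one evaluates
\begin{equation*}
\delta_{n,N}=\min\bigl\{\|(n-n')\vec\alpha_0+\vecell\|:1\le n'\le N,\ \vecell\in\scrL_0,\ (n',\vecell)\ne(n,\vecnull)\bigr\}
\end{equation*}
by enumerating $\vecell$ over a sufficiently large finite box in $\scrL_0$ (standard Minkowski-type bounds show the minimum is attained well inside the box), and then counts the number of distinct values. Since $\vec\alpha_0$ has rational coordinates in both reference examples, the squared distances are rationals and exact arithmetic applies. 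This is essential for $d=2$ because $\delta_{2,9}\approx 0.3484$ and $\delta_{3,9}\approx 0.3488$ agree to three decimal places, and similarly for $d=3$ several of the seven listed distances are numerically close; exact rational comparison removes any ambiguity. Once these checks are complete, the theorem follows at once.
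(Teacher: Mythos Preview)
Your proposal is correct and follows exactly the paper's approach: the authors state just before the theorem that it is obtained by ``applying these data to Theorem \ref{prop:BestPoss}, combined with the upper bound of 5 in Theorem \ref{thm.HighDBoundedGaps}.'' Your additional remarks on verifying the reference configurations via exact rational arithmetic are apt and make explicit what the paper leaves implicit.
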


Thus the upper bound of 5 in Theorem \ref{thm.HighDBoundedGaps} is indeed optimal for every $\scrL$, almost every $\vec\alpha$ and infinitely many $N$.

\begin{figure}
	\centering
	\def\svgwidth{0.6\columnwidth}
	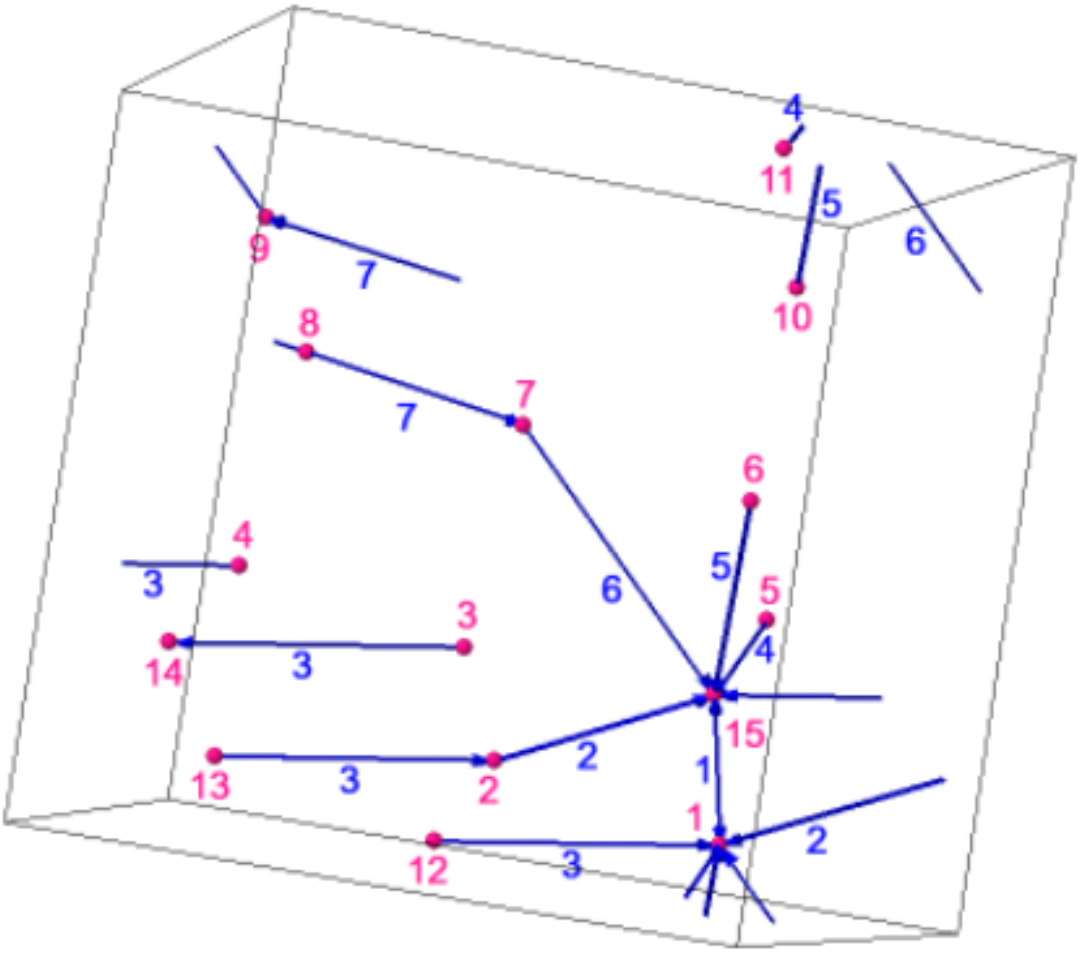
	\caption{The nearest neighbor graph for the Kronecker sequence $n\vec\alpha+\ZZ^3$ ($n=1,\ldots,15$) in the torus $\R^3/\Z^3$ (the unit cube with opposite faces identified), for $\vec\alpha=(\frac{46}{125}, \frac{107}{500}, \frac{43}{500})$. The vertex representing $n\vec\alpha+\ZZ^3$ is labeled by $n$ and colored in pink. The blue directed edges point from a vertex to its nearest neighbour(s). The blue edge labels correspond to the indices of each of the seven distinct distances. \label{fig.3Dgaps1}}
\end{figure}

In dimension $d=1$ nearest neighbour distances do not necessarily coincide with the set of gap lengths, as gaps are the distance to the nearest neighbor in a fixed direction. To generalise this interpretation of a gap to higher dimensions, fix a subset $\scrD$ of the unit sphere $\SS_1^{d-1}$, and denote by $\delta_{n,N}(\scrD)$ the distance from $\xi_n$ to its nearest neighbor {\em in the direction of $\scrD$}. More precisely, denote by $\vec\xi_n\in\RR^d$ a fixed representative of the coset $\xi_n\bmod \scrL$ so that $\xi_n=\vec\xi_n+\scrL$ and define
\begin{equation}\label{NN11}
\delta_{n,N}(\scrD) = \min\{| \vec\xi_m-\vec\xi_n + \vec\ell| \mid \vec\xi_m-\vec\xi_n +\vec\ell \in \RR_{>0}\scrD,\; 1\leq m\leq N,\; \vec\ell\in\scrL \} ,
\end{equation}
where $|\,\cdot\,|$ is the standard Euclidean norm in $\RR^d$. For $\scrD=\SS_1^{d-1}$ we recover the nearest neighbor distance
\begin{equation}
\delta_{n,N}(\SS_1^{d-1}) = \delta_{n,N}=\min\{| \vec\xi_m-\vec\xi_n + \vec\ell|>0 \mid 1\leq m\leq N,\; \vec\ell\in\scrL \}. 
\end{equation}
In particular, note that the nearest neighbor of $\xi_n$ might be $\xi_n$ itself; in this case $\delta_{n,N}=|\vec\ell|$ for suitable non-zero $\ell\in\scrL$. 

As an illustration consider dimension $d=1$. The circle of radius one is $\SS_1^0=\{-1,1\}$ and the choice $\scrD=\{1\}$ produces  the distance to the nearest neighbor to the right (which leads to the classical three distance theorem). On the other hand $\scrD=\{-1,1\}$ yields the distance to the nearest neighbor (in both directions). Since we are in dimension one, the set of distances to nearest neighbors is contained in the set of distances to nearest neighbors to the right, but it is not necessarily equal to it. However, for the case of the Kronecker sequence, the three distance theorem also holds also for nearest neighbors. In other words, there are examples of $\alpha$ and $N$ for which the number of distinct nearest neighbor distances is equal to three (e.g. take $\alpha=e$ and $N=5$).

The central object of our study is the number $g_N(\scrD)$ of distinct nearest neighbor distances in direction $\scrD\subseteq\SS_1^{d-1}$, 
\begin{equation}
g_N(\scrD)  = |\{ \delta_{n,N}(\scrD) \mid 1\le n \le N \}|,
\end{equation}
for $d\ge 2$. We will also write $g_N(\scrD)  = g_N(\scrD,\vec\alpha,\scrL)$ to highlight the dependence on vector and lattice.

We first present detailed results for dimension $d=2$. In this case $\SS_1^{d-1}$ is the unit circle in $\RR^2$.
The following theorem deals with the case when $\scrD\subseteq\SS_1^1$ is an interval of arclength $\tau>\pi$. (The case $\tau=2\pi$ has  already been covered in Theorem \ref{thm.HighDBoundedGaps} and we include it here for completeness.)

\begin{thm}\label{thm.2DBoundedGaps}
Let $d=2$, and assume $\scrD\subseteq\SS_1^1$ is a half-open interval of arclength $\tau>\pi$.
Then for any unimodular lattice $\scrL$, $\vec{\alpha}\in\R^2$ and $N\in\N$ we have that
\begin{equation}\label{eqn.gNUpperBd}
g_N(\scrD,\vec\alpha,\scrL)\le\begin{cases}
5&\text{if}\quad \tau=2\pi ,\\
9&\text{if}\quad5\pi/3<\tau<2\pi,\\
8&\text{if}\quad\tau=5\pi/3,\\
9&\text{if}\quad4\pi/3<\tau<5\pi/3,\\
8&\text{if}\quad\tau=4\pi/3,\\
12+2\left\lfloor\frac{\sin (\tau/2+\pi/6)}{\sin(\tau-\pi)}\right\rfloor&\text{if}\quad\pi<\tau<4\pi/3.
\end{cases}
\end{equation}
\end{thm}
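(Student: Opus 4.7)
The plan is to reduce the cone-restricted problem to a one-dimensional sliding-window minimum and then to bound the number of distinct minima by a geometric packing argument in $\RR_{>0}\scrD$. Fix lifts $\vec\xi_n \in \RR^2$ of $\xi_n$ and, for each $k \in \Z$, set
\[
r(k) := \inf\{|k\vec\alpha + \vec\ell| : \vec\ell \in \scrL,\ k\vec\alpha + \vec\ell \in \RR_{>0}\scrD\},
\]
with $r(k) = +\infty$ when this set is empty. Substituting $k = m - n$ in \eqref{NN11} yields $\delta_{n,N}(\scrD) = \min_{k \in W_n} r(k)$, where $W_n := \{1-n, \ldots, N-n\}$ is a length-$N$ window that always contains $0$; as $n$ increases by one, $W_n$ drops its largest positive index and acquires a new most-negative one. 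Hence $g_N(\scrD, \vec\alpha, \scrL)$ equals the number of distinct values of the sliding-window minimum $n \mapsto \min_{k \in W_n} r(k)$ on $\{1, \ldots, N\}$.

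Order the indices with $r(k) < \infty$ so that $r(k_1) \le r(k_2) \le \cdots$, and call $\vec v_i = k_i \vec\alpha + \vec\ell_i \in \RR_{>0}\scrD$ a \emph{contact vector} when $r(k_i)$ actually occurs as a window minimum, i.e.\ when some $W_n$ contains $k_i$ but none of $k_1, \ldots, k_{i-1}$. A combinatorial analysis of the window structure forces the positive contact indices to be anti-monotone (smaller positive $k$ for larger $r$) and similarly for the negatives. The geometric heart of the argument is a packing lemma: because $\tau > \pi$, we have $\RR_{>0}\scrD \cup (-\RR_{>0}\scrD) = \RR^2 \setminus \{0\}$, so for any two contact vectors $\vec v_i, \vec v_j$ at least one of $\pm(\vec v_i - \vec v_j)$ lies in $\RR_{>0}\scrD$. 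This difference has index $\pm(k_i - k_j) \in \Z$ and is itself a candidate contact vector; the law of cosines bounds its length in terms of $|\vec v_i|, |\vec v_j|$ and the angle between them. Combined with the minimality defining a contact vector, this yields a trigonometric lower bound on the admissible angular separation between contact vectors, whose precise form depends on $\tau$ via the arclength $2(\tau-\pi)$ of the two-sided sub-arc $\scrD \cap (-\scrD)$.

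To assemble the final bound I would partition $\scrD$ into the two-sided sub-arc of arclength $2(\tau-\pi)$ and its one-sided complement of arclength $2(2\pi-\tau)$. Contact vectors in the two-sided region satisfy $\pm\vec v \in \RR_{>0}\scrD$, so the nearest-neighbor argument of Theorem~\ref{thm.HighDBoundedGaps} applies and contributes at most $5$ distinct distances. Contact vectors in the one-sided complement are governed by the packing lemma, which produces the term $2\lfloor\sin(\tau/2+\pi/6)/\sin(\tau-\pi)\rfloor$ together with a bounded overhead giving the generic ``$12 + \text{floor}$'' bound for $\pi < \tau < 4\pi/3$. At the exceptional angles $\tau = 4\pi/3$ and $\tau = 5\pi/3$, equilateral or hexagonal configurations of contact vectors become impossible --- three comparable-length contact vectors pairwise at angle $2\pi/3$ would force one of them to be realised as a short difference of the other two --- and this saves one distance, yielding $8$ in place of the otherwise generic $9$.

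The main obstacle is the precise trigonometric bookkeeping needed to recover the exact constants $5, 8, 9, 12$ in \eqref{eqn.gNUpperBd}, and especially to justify the drops at the special values of $\tau$. The packing lemma itself is clean geometry, but matching its output to the piecewise bound demands identifying and ruling out the extremal contact configurations in $\RR_{>0}\scrD$ one by one; I expect this exhaustive case analysis --- rather than the underlying geometric principle --- to be the longest part of the proof.
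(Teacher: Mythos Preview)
Your overall framework --- sliding-window minima, contact vectors, using differences $\pm(\vec v_i-\vec v_j)\in\RR_{>0}\scrD$ when $\tau>\pi$, and a symmetric/asymmetric decomposition of $\scrD$ --- matches the paper's approach. The paper carries this out by passing to a lattice in $\SL(3,\RR)$ and parametrising your windows by $t\in(0,1)$; your ``contact indices'' $k_i$ become first coordinates $u_i\in(-1,1)$ and the anti-monotonicity you describe is the content of their Proposition~\ref{prop.ADDI}.

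However, two specific claims in your plan are incorrect and would derail the bookkeeping. First, you assert that contact vectors in the two-sided sub-arc $\scrS=\scrD\cap(-\scrD)$ contribute at most $5$ by the nearest-neighbour argument of Theorem~\ref{thm.HighDBoundedGaps}. That argument (the paper's Section~\ref{sec.2DBddPfPart2}) relies on inequalities like $|\vec v_i-\vec v_j|>|\vec v_j|$ and $|\vec v_i-\vec v_j-\vec v_k|\ge|\vec v_k|$ for \emph{all} $i<j<k<K$, which in turn require $u_j\ge 1/2$ for every such $j$. When $\tau<2\pi$ there are contact vectors pointing into the asymmetric complement, and the symmetric-direction vectors alone do not satisfy these hypotheses. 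In fact the paper bounds the symmetric contribution by $1+2\lceil\phi/(\pi/3)\rceil$ with $\phi=\tau-\pi$ (their Propositions~\ref{prop.SymSmallT} and~\ref{prop.SymConeUB}), which is $7$ or $8$ when $\tau$ is close to $2\pi$, not $5$. Second, your explanation for the drops at $\tau=4\pi/3$ and $5\pi/3$ is wrong: they do not come from ruling out equilateral configurations but from the ceiling $\lceil\phi/(\pi/3)\rceil$ failing to jump when $\phi$ hits a multiple of $\pi/3$.

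The ingredient you are missing is the split of contact indices by whether $|k_i|$ is below or above $N/2$ (equivalently $|u_i|<1/2$ versus $|u_i|\ge 1/2$). The paper partitions the count as $K=S_1+S_2+A_1+A_2$ along both the symmetric/asymmetric and the small-$|u|$/large-$|u|$ axes, and the interaction between these two splits --- chiefly that any vector with $|u_i|<1/2$ in $\scrS$ forces $i=K$, and that two asymmetric vectors with $u$-values of opposite sign in $(-1/2,1/2)$ also force one of them to be $K$ --- is what drives the exhaustive case analysis producing the exact constants. Without this second axis your packing lemma alone will not recover $5,8,9,12$.
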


Related results in various settings have been obtained independently by Chevallier \cite{Chev1997,Chev2000,Chev2014} and Vijay \cite{Vija2008}, but the precise description of the nearest neighbor problem which we give here does not appear to have been considered. However, the problem studied by Vijay in his paper ``Eleven Euclidean distances are enough'' \cite{Vija2008} is roughly comparable to the special case of $\tau=3\pi/2$ in Theorem \ref{thm.2DBoundedGaps} above.

By way of contrast with the upper bounds above, our next result demonstrates that the restriction in Theorems \ref{thm.HighDBoundedGaps} and \ref{thm.2DBoundedGaps} to a ``large'' set of directions $\scrD$ is essential. Note that the upper bound for $g_N$ in the final case of Theorem \ref{thm.2DBoundedGaps} tends to infinity as $\tau\rar\pi^+$. In fact, in dimension $d=2$ intervals $\scrD\subset\SS_1^1$ of lengths $\tau<\pi$ produce unbounded numbers of distinct distances, for almost every $\vec\alpha$. This is part of the content of the following theorem, which also deals with analogous regions $\scrD$ in higher dimensions.

\begin{thm}\label{thm:one}
Let $d\geq 2$ and $\scrL$ a unimodular lattice.
There exists a set $P\subset\RR^d$ of full Lebesgue measure, such that for every $\scrD\subset\SS_1^{d-1}$ with non-empty interior and closure contained in an open hemisphere, for every $\vec\alpha\in P$, and for every sub-exponential sequence $(N_i)_i$, we have
\begin{equation} \label{diverge-bdd}
\sup_i g_{N_i}(\scrD,\vec\alpha,\scrL)=\infty ,\qquad \liminf_i g_{N_i}(\scrD,\vec\alpha,\scrL)<\infty .
\end{equation}
\end{thm}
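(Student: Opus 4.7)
The plan is to translate the statement into a question about the diagonal flow on the space $X_{d+1}=\SL(d+1,\RR)/\SL(d+1,\ZZ)$ of unimodular lattices in $\RR^{d+1}$. Associate to $\vec\alpha$ the $(d+1)$-dimensional unimodular lattice
\[
\scrL_{\vec\alpha}=\{(\vec\ell+k\vec\alpha,\,k):\vec\ell\in\scrL,\ k\in\ZZ\}\in X_{d+1},
\]
and let $\Phi_t=\diag(t^{1/d}I_d,\,t^{-1})\in\SL(d+1,\RR)$. Substituting $k=m-n$ in \eqref{NN11} rewrites $\delta_{n,N}(\scrD)=\min\{|w|:(w,k)\in\scrL_{\vec\alpha},\ w\in\RR_{>0}\scrD,\ 1-n\leq k\leq N-n\}$, and after the rescaling $\Lambda_N=\Phi_N\scrL_{\vec\alpha}$, $c_n=\tfrac12-\tfrac{n}{N}+\tfrac{1}{2N}\in[-\tfrac12,\tfrac12]$, this becomes $\delta_{n,N}(\scrD)=N^{-1/d}W(\Lambda_N,\scrD,c_n)$, where
\[
W(\Lambda,\scrD,c):=\min\{|w|:(w,s)\in\Lambda,\ w\in\RR_{>0}\scrD,\ |s-c|\leq\tfrac12\}.
\]
The map $c\mapsto W(\Lambda,\scrD,c)$ is piecewise constant with only finitely many jumps on $[-\tfrac12,\tfrac12]$, so the quantity $G(\Lambda,\scrD):=|\{W(\Lambda,\scrD,c):c\in[-\tfrac12,\tfrac12]\}|$ is a finite Borel function on $X_{d+1}$, and $g_N(\scrD,\vec\alpha,\scrL)$ is precisely the number of distinct values of $W(\Lambda_N,\scrD,\cdot)$ sampled on the uniform grid $(c_n)_{n=1}^N$ of spacing $1/N$.

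The dynamical input is the equidistribution of expanding horospheres. Since $\vec\alpha\mapsto\scrL_{\vec\alpha}$ parametrises the unstable horospherical leaf through $\scrL_{\vec 0}$ for the action of $\Phi_t$, the theorem of Dani (later refined by Kleinbock and Margulis) yields a full-measure set $P\subset\RR^d$, depending only on $\scrL$, such that $(\Phi_t\scrL_{\vec\alpha})_{t\geq 1}$ equidistributes in $X_{d+1}$ with respect to Haar probability measure for every $\vec\alpha\in P$. In logarithmic time the sub-exponentiality $N_{i+1}/N_i\to 1$ becomes $\log N_{i+1}-\log N_i\to 0$; together with equidistribution this forces the discrete sample $\{\Lambda_{N_i}\}_i$ to be dense in $X_{d+1}$ and to visit every non-empty open $U\subseteq X_{d+1}$ at infinitely many indices $i$.

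Two geometric facts about $G(\cdot,\scrD)$ then close the argument. First, on any compact $K\subset X_{d+1}$ with non-empty interior, Mahler compactness bounds the systole from below and the covering radius from above uniformly, so the window minimiser has bounded norm and $G(\Lambda,\scrD)\leq C(K,\scrD)<\infty$ for all $\Lambda\in K$; applying the density statement with $U=\operatorname{int}(K)$ yields $\liminf_i G(\Lambda_{N_i},\scrD)\leq C(K,\scrD)$. Second, for each $M\in\NN$ one constructs a non-empty open $U_M\subset X_{d+1}$ on which $G(\cdot,\scrD)\geq M$: take any lattice containing $M$ points $(w_j,s_j)$ with $w_j\in\RR_{>0}\scrD$, pairwise distinct norms $|w_j|$, and $s_j\in[-\tfrac12,\tfrac12]$, arranged so that each $(w_j,s_j)$ is the strict window minimiser on an open interval of centres $c$. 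The existence of such a configuration uses that $\scrD$ has non-empty interior; its persistence under small perturbations of $\Lambda$ uses crucially the hemisphere assumption, which prevents an antipodal vector $-w_j$ from entering the same window and dominating the minimum. Applying the density statement with $U=U_M$ for every $M$ then gives $\sup_i G(\Lambda_{N_i},\scrD)=\infty$, and the set $P$ is independent of $\scrD$ because equidistribution is a property of $\vec\alpha$ alone.

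The principal obstacle is upgrading the bounds on $G$ to bounds on $g_N$. The inequality $g_N\leq G$ is immediate and delivers the $\liminf$ direction without further work. For the $\sup$ direction, however, one must refine $U_M$ to a subset on which the $M$ plateaus of $W(\Lambda,\scrD,\cdot)$ have $c$-widths bounded below by a fixed $\eta_M>0$; then $g_{N_i}\geq M$ as soon as $1/N_i<\eta_M$, which holds for all sufficiently large $i$ in the sub-exponential sequence. Verifying that this refined open set still carries positive Haar measure, via a perturbative stability estimate for the plateau structure of $W$ on $X_{d+1}$, is the technical heart of the argument.
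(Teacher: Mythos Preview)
Your plan is essentially the paper's proof: translate to the lattice function $\scrG_\scrD$ on $\Gamma\backslash G$, use density of $\{\Gamma A_{N_i+1/2}(\vec\alpha)\}$ for a full-measure set of $\vec\alpha$ (via equidistribution of expanding horospheres, cf.\ \cite{HaynMar2020}), apply the uniform upper bound on compacta (Proposition~\ref{prop:three}) for the $\liminf$, and construct open sets with large $\scrG_{\scrD,N}$ (Proposition~\ref{lowgapprop}, built from the explicit one-parameter lattice family $M_\epsilon$ of Proposition~\ref{prop:unbddNEW} together with continuity of $F_\scrD$) for the $\sup$. One small correction to your attribution of hypotheses: the hemisphere condition is needed already for the \emph{existence} of a lattice with arbitrarily many distinct window minima, not merely for persistence under perturbation---when $\scrD$ contains antipodal pairs the symmetric-cone arguments of Sections~\ref{sec.2Dprep}--\ref{sec.HighDBddPf} bound $\scrG_\scrD$ uniformly over $\Gamma\backslash G$, so the sets $U_M$ you posit would be empty for large $M$.
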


Our final observation is that the finite distance phenomenon is recovered in any dimension, for general test sets $\scrD$, if we impose Diophantine conditions on $\vec\alpha$. We say that $\vec\alpha\in\RR^d$ is {\em badly approximable by $\QQ\scrL$} if there is a constant $c>0$ such that $|n \vec\alpha - \vec\ell|_\infty > c n^{-1/d}$ for all $\vec\ell\in\scrL$, $n\in\NN$. Here $|\;\cdot\;|_\infty$ denotes the maximum norm.

\begin{thm}\label{thm:two}
Let $d\geq 2$, $\scrL$ a unimodular lattice and $\vec\alpha\in\RR^d$ badly approximable by $\QQ\scrL$. 
For $\scrD\subseteq\SS_1^{d-1}$  with non-empty interior, we have that
\begin{equation}\sup_{N\in\NN} g_{N}(\scrD,\vec\alpha,\scrL)<\infty.\end{equation}
\end{thm}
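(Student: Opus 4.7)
The plan is to reformulate $\delta_{n,N}(\scrD)$ as a lattice-geometric quantity in $\RR^{d+1}$ and then exploit the Dani correspondence, which converts the Diophantine hypothesis on $\vec\alpha$ into a boundedness statement in the space of unimodular lattices.

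To the pair $(\vec\alpha,\scrL)$ I associate the unimodular lattice
\begin{equation*}
\Lambda = \Lambda(\vec\alpha,\scrL) = \bigl\{(\vec\ell + k\vec\alpha,\,k) : \vec\ell \in \scrL,\ k \in \ZZ\bigr\} \subset \RR^{d+1},
\end{equation*}
and write $\pi\colon\RR^{d+1}\to\RR^d$ and $\pi^\perp\colon\RR^{d+1}\to\RR$ for the projections onto the first $d$ and the last coordinate respectively. Directly from~\eqref{NN11},
\begin{equation*}
\delta_{n,N}(\scrD) = \min\bigl\{|\pi(v)| : v\in\Lambda,\ \pi(v)\in\RR_{>0}\scrD,\ \pi^\perp(v)\in[1-n,N-n]\bigr\}.
\end{equation*}
Next I would apply the diagonal flow $g_t=\diag(e^t I_d,e^{-dt})\in\SL(d+1,\RR)$ with $t_N=(\log N)/d$, and set $\Lambda'_N=g_{t_N}\Lambda$. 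After rescaling, the strip becomes an interval $I_n=[(1-n)/N,(N-n)/N]\subset[-1,1]$ of length $(N-1)/N$, and
\begin{equation*}
N^{1/d}\,\delta_{n,N}(\scrD) = \min\bigl\{|\pi(w)| : w\in\Lambda'_N,\ \pi(w)\in\RR_{>0}\scrD,\ \pi^\perp(w)\in I_n\bigr\}.
\end{equation*}
By the Dani correspondence, $\vec\alpha$ is badly approximable by $\QQ\scrL$ precisely when the forward orbit $\{g_t\Lambda:t\ge 0\}$ is relatively compact in $\SL(d+1,\RR)/\SL(d+1,\ZZ)$. I fix a compact set $K$ containing its closure and let $\epsilon_K>0$ denote the uniform Mahler lower bound on the length of any nonzero vector of any lattice in $K$.

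The heart of the proof is the uniform upper bound $N^{1/d}\delta_{n,N}(\scrD)\le C$ valid for all $n$ and all sufficiently large $N$, with $C=C(K,\scrD)$. Granting this, every minimiser $w\in\Lambda'_N$ satisfies $|w|\le\sqrt{C^2+1}$, and a standard packing argument using $\epsilon_K$ bounds the number of such lattice points of $\Lambda'_N$ by a constant $M=M(K,\scrD)$, uniformly for $\Lambda'_N\in K$. Since each minimiser contributes at most one value to $\{\delta_{n,N}(\scrD):1\le n\le N\}$, we obtain $g_N(\scrD,\vec\alpha,\scrL)\le M$ for all sufficiently large $N$; the trivial bound $g_N\le N$ handles the finitely many remaining small $N$, yielding $\sup_N g_N(\scrD,\vec\alpha,\scrL)<\infty$.

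The main obstacle will be establishing the uniform upper bound on the renormalised minimum. A direct Minkowski application to $\Lambda'_N$ with an origin-symmetric body produces only a short lattice vector whose $\pi^\perp$-coordinate lies in a neighbourhood of $0$, which need not meet the window $I_n$ for every $n$. I would overcome this by invoking the uniform inhomogeneous Diophantine approximation available under the badly approximable hypothesis: for every $\vec\beta\in\RR^d$ and $N\ge 1$, there exist $k\in[1,N]$ and $\vec\ell\in\scrL$ with $|k\vec\alpha+\vec\ell-\vec\beta|\le C'N^{-1/d}$. Applied on the shifted window $[1-n,N-n]$ with target $\vec\beta=r\vec u_0$ for a fixed direction $\vec u_0$ in the interior of $\scrD$ and $r$ slightly larger than $C'N^{-1/d}$, this produces a Kronecker-lattice point in $\RR_{>0}\scrD$ at Euclidean distance $O(N^{-1/d})$ from $\xi_n$. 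An alternative, more geometric route combines Minkowski's theorem on $\Lambda$ with translates by the lattice vectors $(\vec\alpha,1)\in\Lambda$ and by $\scrL\times\{0\}\subset\Lambda$ to reposition an origin-centred Minkowski vector into any prescribed window, the Diophantine hypothesis ensuring that the resulting cost in $|\pi|$ remains $O(N^{-1/d})$.
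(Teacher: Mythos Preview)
Your proposal is correct and follows essentially the same architecture as the paper: reformulate $\delta_{n,N}(\scrD)$ via a unimodular lattice in $\RR^{d+1}$, invoke the Dani correspondence to confine the renormalised lattices to a compact set, obtain a uniform upper bound on the rescaled minimum, and finish with a packing count. The paper executes the crucial uniform upper bound via a single covering-radius argument (Proposition~\ref{prop:twoB}), using the inhomogeneous minimum of the lattice with respect to the non-symmetric body $(0,1)\times(0,1]\scrD$; your ``uniform inhomogeneous approximation'' claim is precisely a restatement of that covering-radius bound, so be aware that justifying it rigorously will lead you back to the same compactness-of-covering-radius step rather than to an independent Diophantine input.
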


For comparison, the more precise bounds in Theorems \ref{thm.HighDBoundedGaps} and \ref{thm.2DBoundedGaps} hold for all $\vec\alpha$, but only for a restricted class of $\scrD$.

To relate the above Diophantine condition on $\vec\alpha$ to the standard notion of {\em badly approximable by $\QQ^d$} (which corresponds to the special case $\scrL=\ZZ^d$), take $M_0\in\SL(d,\RR)$ so that $\scrL=\ZZ^d M_0$. We then see that $\vec\alpha\in\RR^d$ is badly approximable by $\QQ\scrL$ if and only if $\vec\alpha_0=\vec\alpha M_0^{-1}$ is badly approximable by $\QQ^d$. (The positive constants $c$ appearing in both definitions are not necessarily the same.) 
Furthermore, by Khintchine's transference principle (see the Corollary to Theorem II in \cite[Chapter V]{Cass1957}), the vector $\vec\alpha_0$ is badly approximable by $\QQ^d$ if and only if there is $c>0$ such that $\| \vec m\cdot\vec\alpha_0 \|_{\RR/\ZZ} > c | \vec  m |^{-d}$ for all non-zero $\vec m\in\ZZ^d$. Here $\| x \|_{\RR/\ZZ}=\min_{k\in\ZZ} | x+k|$ denotes the distance to the nearest integer.

The key strategy of the proofs of the above theorems is to express the quantities $S_N(\vec\alpha)$, $\delta_{n,N}(\scrD)$ and $g_N(\scrD)$ in terms of functions on the space $\SL(d+1,\ZZ)\setminus\SL(d+1,\RR)$ of unimodular lattices in $\RR^{d+1}$. This is explained in detail in Section \ref{sec:Lattices}. Once this connection is established, the proofs of Theorems  \ref{thm.HighDBoundedGaps} and \ref{thm.2DBoundedGaps} reduce to geometric arguments involving lattices and sphere coverings, which are laid out in Sections \ref{sec.2Dprep}-\ref{sec.2DBddPfPart2} in dimension $d=2$ and in Section \ref{sec.HighDBddPf} for dimensions $d\ge 3$. The proofs of Theorems \ref{prop:BestPoss}, \ref{thm:one}, and \ref{thm:two} require upper and lower bounds for the relevant functions on the space of lattices, combined with the same ergodic-theoretic arguments used in \cite{HaynMar2020}. This material is presented in Section \ref{uplow}.\vspace*{10bp}

\noindent Acknowledgments: We would like to thank Nicolas Chevallier for helpful comments, and Felipe Ramirez and Carl Dettmann for discussions that led to an improvement of our bounds in Theorem \ref{thm.HighDBoundedGaps} in dimension $d\geq 3$. We would also like to thank the anonymous referees who carefully read our paper and provided many useful comments. The images in Figures \ref{fig.5Gaps1}-\ref{fig.GapsProp1} were generated using the computer software packages SageMath, Jmol, GeoGebra, and Inkscape. Finally, we would like to thank Timothy Haynes for his help in optimizing our Python code, which aided in the discovery of the examples illustrated in Figures \ref{fig.5Gaps1} and \ref{fig.3Dgaps1}.

\section{Reformulation in terms of lattices}\label{sec:Lattices}

This section follows the approach developed for the three gap theorem \cite{MarkStro2017} and higher dimensional variants  concerning gaps in values taken by linear forms, and hitting times for toral rotations \cite{HaynMar2020}.

By substituting $k=m-n$ in equation \eqref{NN11}, we find that the distance from $\xi_n$ to its nearest neighbor in the direction of $\scrD$ is given by
\begin{equation}\label{eqn.DefDelta_n,N}
\begin{split}
\delta_{n,N}(\scrD) & = \min\{| k\vec\alpha + \vec\ell | \mid k\vec\alpha + \vec\ell \in \RR_{>0}\scrD,\; -n< k\leq N-n,\; \vec\ell\in\scrL \} \\
& = \min\{| k\vec\alpha + \vec\ell | \mid k\vec\alpha + \vec\ell \in \RR_{>0}\scrD,\; -n< k< N_+-n,\; \vec\ell\in\scrL \} ,
\end{split}
\end{equation}
where $N_+:=N+\tfrac12$.
Select $M_0\in\SL(d,\RR)$ so that $\scrL=\ZZ^d M_0$, and let
\begin{equation}\label{eqn.A_NDef}
A_N(\vec\alpha)=A_N(\vec\alpha,\scrL)=\begin{pmatrix} 1 & 0 \\ 0 & M_0  \end{pmatrix} \begin{pmatrix} 1 & \vec{\alpha} \\ 0 & \bm{1}_d  \end{pmatrix} \begin{pmatrix} N^{-1} & 0 \\ 0 & N^{1/d}\bm{1}_d\end{pmatrix}.
\end{equation}
Then we have that
\begin{multline}
\delta_{n,N}(\scrD) = N_+^{-1/d} \min\bigg\{| \vec v | \;\bigg|\;  (u,\vec v)\in\ZZ^{d+1} A_{N_+}(\vec\alpha),\\
 -\frac{n}{N_+}< u < 1-\frac{n}{N_+},
\;\vec v \in \RR_{>0}\scrD \bigg\} ,
\end{multline}
for all $1\leq n\leq N$.
To cast this in a more general setting, let $G=\SL(d+1,\RR)$ and $\Gamma=\SL(d+1,\ZZ)$. Then for general $M\in G$ and $t\in (0,1)$, define
\begin{equation} \label{QMt}
\scrQ_\scrD(M,t) = \big\{ (u,\vec v)\in\ZZ^{d+1} M\;\big| \; -t< u < 1-t, \;\vec v \in \RR_{>0}\scrD \big\}
\end{equation}
and
\begin{equation}\label{FMt}
F_\scrD(M,t) = \min\big\{| \vec v | \;\big|\;  (u,\vec v)\in\scrQ_\scrD(M,t) \big\} .
\end{equation}
In this notation it is clear that
\begin{equation}\label{NN1102}
\delta_{n,N}(\scrD) = N_+^{-1/d} F_\scrD\bigg(A_{N_+}(\vec\alpha),\frac{n}{N_+} \bigg) .
\end{equation}
Before proceeding further, we first establish the following basic result.

\begin{prop}\label{WellProp}
If $\scrD\subseteq\SS_1^{d-1}$ has non-empty interior, then $F_\scrD$ is well-defined as a function $\GamG\times (0,1) \to \RR_{>0}$.
\end{prop}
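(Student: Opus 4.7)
My plan is to verify the three requirements of well-definedness: (i) $\Gamma$-invariance, (ii) attainment of the minimum in \eqref{FMt}, and (iii) strict positivity of its value. Throughout, write $\pi:\RR^{d+1}\to\RR$ and $\sigma:\RR^{d+1}\to\RR^d$ for the projections onto the $u$- and $\vec v$-coordinates, and set $\Lambda:=\ZZ^{d+1}M$, a unimodular lattice in $\RR^{d+1}$. Invariance is immediate from $\ZZ^{d+1}\gamma=\ZZ^{d+1}$ for every $\gamma\in\Gamma$. For (ii) and (iii), note that every $(u,\vec v)\in\scrQ_\scrD(M,t)$ has $\vec v\neq\vec 0$ (because $\vec 0\notin\RR_{>0}\scrD$), and discreteness of $\Lambda$ forces $\Lambda\cap\{|u|<1,\,|\vec v|\le L\}$ to be finite for every $L>0$; consequently $\{|\vec v|:(u,\vec v)\in\scrQ_\scrD(M,t)\}$ is a discrete subset of $(0,\infty)$ whose infimum is automatically strictly positive and attained, provided only that $\scrQ_\scrD(M,t)$ is shown to be non-empty.

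The main content is therefore non-emptiness of $\scrQ_\scrD(M,t)$. The key geometric input is that, since $\scrD$ has non-empty interior, there exist $\uvecw\in\scrD$ and $\varepsilon>0$ with $B(s\uvecw,s\varepsilon)\subset\RR_{>0}\scrD$ for every $s>0$; in particular $\RR_{>0}\scrD$ contains open balls of arbitrarily large radius. I would proceed by cases on the rank $r$ of the sublattice $\Lambda':=\Lambda\cap\{u=0\}$ inside $\{u=0\}\cong\RR^d$. If $r=d$, then $\Lambda'$ is a full-rank lattice in $\RR^d$ with finite covering radius $\mu$, and choosing $s>\mu/\varepsilon$ yields a point of $\Lambda'\cap\RR_{>0}\scrD$, which together with $0\in(-t,1-t)$ provides the required element of $\scrQ_\scrD(M,t)$.

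If $r<d$, then $\pi(\Lambda)$ is dense in $\RR$ while $\sigma(\Lambda)$ has rank at least $d$ in $\RR^d$ and is therefore either a full-rank lattice or dense. The same ball-in-cone argument produces $\vec v^*\in\sigma(\Lambda)\cap\RR_{>0}\scrD$, which can be chosen deep enough in the cone that $B(\vec v^*,\rho)\subset\RR_{>0}\scrD$ for any prescribed $\rho>0$. Fix any lift $(u^*,\vec v^*)\in\Lambda$; the lifts of $\vec v^*$ form a coset of $H:=\pi(\Lambda\cap\sigma^{-1}(\vec 0))$, equal to $\{0\}$ or to $\beta'\ZZ$ for some $\beta'>0$. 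If $H=\beta'\ZZ$ with $\beta'\le 1$, the coset $u^*+H$ already meets the unit-length interval $(-t,1-t)$, completing this case.

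The main obstacle is the remaining subcase $H=\{0\}$ or $\beta'>1$. My plan here is to perturb: replace $\vec v^*$ by $\vec v^*+\vec w$ for some $\vec w\in\sigma(\Lambda)\cap B(\vec 0,\rho)$, which keeps the result in $\RR_{>0}\scrD$ while shifting the $u$-coordinate by the $u$-coordinate of the lift of $\vec w$ in $\Lambda$. By taking $\rho$ (and hence the depth of $\vec v^*$ in the cone) sufficiently large, the lattice points of $\Lambda$ with $\vec v$-component in $B(\vec 0,\rho)$ are numerous enough that their $u$-coordinates, reduced modulo $H$, fill any open arc of length $1$ in $\RR/H$; this is where the matching of the interval length $1$ with the covolume of $\Lambda$ is crucial, since lattice point counts in $(-R,R)\times B(\vec 0,\rho)$ grow like $2R\rho^d$ and so allow a unit interval in $u$ to be hit once $\rho$ is large. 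Once such a $\vec w$ is found, the adjusted lift has $u$-coordinate lying modulo $H$ in $(-t,1-t)$, and a further translation by $H$ (when $H\neq\{0\}$) places it in the interval itself, which completes the proof.
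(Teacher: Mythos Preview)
Your proof has two genuine gaps. First, the dichotomy ``$\sigma(\Lambda)$ is either a full-rank lattice or dense'' is false: a subgroup of $\RR^d$ of rank $\ge d$ that spans $\RR^d$ can be neither. For instance, with $d=2$ and $\Lambda$ the lattice with basis $(\sqrt 2,1,0)$, $(\sqrt 3,\sqrt 5,0)$, $(0,0,1)$, one has $r=1<d$ yet $\sigma(\Lambda)=(\ZZ+\sqrt 5\,\ZZ)\times\ZZ$, whose closure is $\RR\times\ZZ$. Your ball-in-cone argument for producing $\vec v^*$ does not cover this intermediate case as written.

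Second, and more seriously, the perturbation step is not justified. You assert that for $\rho$ large the $u$-coordinates of lattice points with $|\vec v|<\rho$, reduced mod $H$, meet every arc of length~$1$, and offer as support the lattice-point count $\#\bigl(\Lambda\cap(-R,R)\times B(\vec 0,\rho)\bigr)\sim 2R\rho^d$. But a count gives only an average of $\rho^d$ points per unit $u$-interval; it says nothing about whether any particular interval is hit, since the points could cluster. In the case $H=\{0\}$ (which does occur), what you need is precisely the covering statement $\Lambda+\bigl((-1/2,1/2)\times B(\vec 0,\rho)\bigr)=\RR^{d+1}$ for large $\rho$; this is true but is essentially the finiteness of the inhomogeneous minimum for a thin slab, and is not a consequence of the count. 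There is also a circularity to manage: the required $\rho$ must be chosen \emph{before} $\vec v^*$ (since the depth of $\vec v^*$ depends on $\rho$), which forces you to establish $1$-density uniformly over all arcs rather than just the one determined by $u^*$.

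By contrast, the paper's argument avoids all of this with a single application of Minkowski's theorem. Fixing $\vec w\in\scrD^\circ$ and small $\epsilon,\delta>0$, the unbounded symmetric convex body $\{(u,\vec v):|u|<\epsilon,\ |\vec v_\perp|<\delta\}$ (with $\vec v_\perp$ the component orthogonal to $\vec w$) has infinite volume and therefore contains a nonzero lattice point; choosing $\epsilon$ below the length of the shortest nonzero vector of $\Lambda$ and using the symmetry $\vec p\mapsto-\vec p$ forces this point to lie in $\scrQ_\scrD(M,t)$. This bypasses the rank case-split and the covering estimate entirely.
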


\begin{proof}
We first show that the set $\scrQ_\scrD(M,t)$
is non-empty for all $M\in G$ and $t\in (0,1)$. Fix any $\vec w\in\scrD^\circ$, and denote by $\Sigma_\perp\subset\RR^d$ the $(d-1)$-dimensional hyperplane perpendicular to $\vec w$. Denote by $\vec v_\perp$ the orthogonal projection of $\vec v$ to $\Sigma_\perp$.
Given $(M,t)$, let $\epsilon>0$ be sufficiently small so that (i) $\epsilon<\min\{t,1-t\}$, (ii) there is no non-zero lattice point in $\ZZ^{d+1} M$ within $\epsilon$-distance to the origin. Furthermore, fix $\delta>0$ sufficiently small so that
\begin{equation}\label{Hset}
\{ \vec v \in\RR^d \mid \vec w\cdot\vec v>\epsilon, \;|\vec v_\perp|<\delta \} \subset \RR_{>0}\scrD.
\end{equation}
Such a $\delta$ exists since $\vec w\in\scrD^\circ$.
By construction
\begin{equation}\label{HSET}
\{ (u,\vec v) \in\ZZ^{d+1} M \mid |u|<\epsilon,\;\vec w\cdot\vec v>\epsilon, \; |\vec v_\perp|<\delta \} \subset \scrQ_\scrD(M,t)
\end{equation}
In view of Minkowski's theorem, the symmetric, convex set $\{ (u,\vec v) \in\RR^{d+1} \mid |u|<\epsilon,\;  |\vec v_\perp|<\delta \}$ contains a non-zero element of $\ZZ^{d+1} M$. By construction there is no non-zero lattice point in $\ZZ^{d+1} M$ within $\epsilon$-distance to the origin, and furthermore $\ZZ^{d+1} M$ is symmetric under reflection at the origin. We can therefore conclude that the set in
\eqref{HSET}, and therefore also $\scrQ_\scrD(M,t)$, is non-empty. Hence by the uniform discreteness of $\ZZ^{d+1} M$ the minimum value in the definition of $F_\scrD$ exists. 

Finally, note that $F_\scrD(\gamma M,t)= F_\scrD(M,t)$ for $\gamma\in\Gamma$, and hence $F$ is well-defined on $\GamG\times(0,1)$.
\end{proof}

Note that for $R\in\SO(d)$ we have
\begin{equation}\label{Kinv}
F_{\scrD R}\bigg(M \begin{pmatrix} 1 & 0 \\ 0 & R \end{pmatrix},t\bigg) = F_\scrD(M ,t) .
\end{equation}

Let us define
\begin{equation}
\scrQ_\scrD(M) = \bigcup_{t\in(0,1)}\scrQ_\scrD(M,t) = \big\{ (u,\vec v)\in\ZZ^{d+1} M\;\big| \; |u|<1, \;\vec v \in \RR_{>0}\scrD \big\} .
\end{equation}
The set 
\begin{equation}\label{MD10}
\scrM_\scrD(M)= \big\{| \vec v | \;\big|\;  (u,\vec v)\in\scrQ_\scrD(M) \big\}
\end{equation}
contains the set of values taken by the function $t\mapsto F_\scrD(M,t)$. It is a locally finite subset of $\RR_{>0}$, i.e., there are at most finitely many points in any bounded interval.
It follows that for fixed $M$, the function $t\mapsto F_\scrD(M,t)$ is piecewise constant.

We denote by 
\begin{equation}
\scrG_\scrD(M)=|\{ F_\scrD(M,t) \mid 0<t<1\}|
\end{equation}
the number of distinct values attained by the function $t\mapsto F_\scrD(M,t)$. For $N>0$, let 
\begin{equation}
\scrG_{\scrD,N}(M)=|\{ F_\scrD(M,\tfrac{n}{N_+}) \mid 1\leq n \leq N\}|.
\end{equation}
We have $\scrG_{\scrD,N}(M)\leq \scrG_\scrD(M)$,
and so in particular
\begin{equation}\label{keyineq}
g_N(\scrD) = \scrG_{\scrD,N}(A_{N_+}(\vec\alpha)) \leq \scrG_\scrD(A_{N_+}(\vec\alpha)).
\end{equation}

\section{Geometric lemmas in dimension $d=2$}\label{sec.2Dprep}

To fix notation for our subsequent discussion, we define a representative set of vectors $(u_i,\vec v_i)\in\scrQ_\scrD(M)$, for which the lengths $|\vec v_i|$ are distinct, and each of which corresponds to an element in the set 
\begin{equation}
\scrF_\scrD(M)=\{ F_\scrD(M,t) \mid 0<t<1\}.
\end{equation}
To be specific, for each $M\in\GamG$ we fix vectors $(u_1,\vec{v}_1),\ldots,(u_K,\vec{v}_K)\in\scrQ_\scrD(M)$ with $K=\scrG_\scrD(M)$, so that the following conditions hold:
\begin{enumerate}
	\item[(V1)] $0<|\vec{v}_1|<|\vec{v}_2|<\cdots <|\vec{v}_K|$.
	\item[(V2)] For each $\delta\in\scrF_\scrD(M)$
	there exists an $1\le i\le K$ such that $\delta=|\vec{v}_i|$.
	\item[(V3)] For each $1\le i\le K$, there exists a $t\in(0,1)$ such that $(u_i,\vec{v}_i)\in\scrQ_\scrD(M,t)$ and $|\vec v_i|=F_\scrD(M,t)$.
\end{enumerate}

Let us now focus on the case $d=2$. In the following we identify the unit circle $\SS_1^1$ with the interval $[-\pi,\pi)\bmod 2\pi$, so that $0$ corresponds to direction $(1,0)\in\SS_1^1$. In view of the rotation invariance \eqref{Kinv} we may assume without loss of generality that $\scrD\subseteq\SS_1^1$ is centered at $\theta=0$, i.e., $\scrD= [-\tau/2,\tau/2)$, which we still view as a subset of $\SS_1^1$ (not $\RR$).

Our proof of Theorem \ref{thm.2DBoundedGaps}, or rather the proofs of the more general Theorems \ref{thm.2DBoundedGapsFULL} and \ref{thm.2DBoundedGapsFULL2} below, will be divided into three main cases, which together cover all possible angles  $\tau\in(\pi,2\pi]$ described in \eqref{eqn.gNUpperBd}. In each of these cases we will partition $\scrD$ into subsets, consisting of a {\em symmetric} set $\scrS\subset\scrD$ (symmetric with respect to the rotation $\theta \mapsto \theta+\pi\bmod 2\pi$) and up to three {\em asymmetric} subsets. For notational convenience, let us set \begin{equation}
\psi=2\pi-\tau\in [0,\pi) \quad\text{and}\quad \phi=\tau-\pi\in(0,\pi].
\end{equation}
First we specify our definitions of the asymmetric subsets in each of the three main cases.\vspace*{14bp}

\noindent{\textbf{Case (C1):}} If $5\pi/3\le\tau\le 2\pi$ then we define one asymmetric subset $\mc{A}_0\subset\scrD$ by
\begin{equation}
\mc{A}_{0}= [-\psi/2,\psi/2).
\end{equation}
(Note that this is the empty set if $\tau=2\pi$.)\vspace*{10bp}

\noindent{\textbf{Case (C2):}} If $4\pi/3\le\tau< 5\pi/3$ then we define two asymmetric subsets $\mc{A}_{-1}$ and $\mc{A}_1$ by
\begin{equation}
\mc{A}_{-1}=[-\psi/2,0), \qquad \mc{A}_{1}=[0,\psi/2).
\end{equation}

\noindent{\textbf{Case (C3):}} If $\pi<\tau< 4\pi/3$ then we define three asymmetric subsets $\mc{A}_{-1}, \mc{A}_0,$ and $\mc{A}_1$ by
\begin{equation}
\mc{A}_{-1}=[-\psi/2,-\pi/6), \quad
\mc{A}_{0}=[-\pi/6,\pi/6), \quad \mc{A}_{1}=[\pi/6,\psi/2).
\end{equation}

In all three cases, we define the symmetric subset $\mc{S}$ by 
\begin{equation}
\mc{S}=[-\tau/2,-\psi/2)\cup[\psi/2,\tau/2) .
\end{equation}
It is clear that $\mc{S}$ is the largest symmetric subset of $\scrD$, that $\scrD$ is the disjoint union of $\scrS$ and its asymmetric subsets, and that each asymmetric subset is a half-open interval of length at most $\pi/3$.

Now we will establish several propositions which will help streamline the proofs of Theorems \ref{thm.2DBoundedGapsFULL} and \ref{thm.2DBoundedGapsFULL2} below (which in turn will imply Theorem \ref{thm.2DBoundedGaps}). First we will need the following elementary fact which, for future reference, we state for arbitrary dimension $d\ge 2$.
\begin{prop}\label{prop.LengthReduc}
If $d\ge 2$ and if the angle between two non-zero vectors $\vec{w}_1,\vec{w}_2\in\R^d$ is less than $\pi/3$, then
\begin{equation}
|\vec{w}_1-\vec{w}_2|<\max\left\{|\vec{w}_1|,|\vec{w}_2|\right\}.
\end{equation}
Furthermore, this inequality also holds if the angle between $\vec{w}_1,\vec{w}_2\in\R^d$ is equal to $\pi/3$, as long as $|\vec{w}_1|\not=|\vec{w}_2|$.
\end{prop}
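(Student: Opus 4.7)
The plan is to reduce everything to the law of cosines and a short inequality check. Without loss of generality assume $|\vec{w}_1|\le|\vec{w}_2|$, so that $\max\{|\vec{w}_1|,|\vec{w}_2|\}=|\vec{w}_2|$. Letting $\theta\in[0,\pi]$ denote the angle between $\vec{w}_1$ and $\vec{w}_2$, the law of cosines gives
\begin{equation*}
|\vec{w}_1-\vec{w}_2|^2 = |\vec{w}_1|^2+|\vec{w}_2|^2 - 2|\vec{w}_1|\,|\vec{w}_2|\cos\theta.
\end{equation*}
Thus the desired inequality $|\vec{w}_1-\vec{w}_2|<|\vec{w}_2|$ is equivalent to
\begin{equation*}
|\vec{w}_1|\bigl(|\vec{w}_1| - 2|\vec{w}_2|\cos\theta\bigr) < 0,
\end{equation*}
and since $|\vec{w}_1|>0$, this reduces to showing $|\vec{w}_1| < 2|\vec{w}_2|\cos\theta$.

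If $\theta<\pi/3$, then $\cos\theta > 1/2$, so $2|\vec{w}_2|\cos\theta > |\vec{w}_2| \ge |\vec{w}_1|$, as required. If instead $\theta = \pi/3$, then $\cos\theta = 1/2$ and $2|\vec{w}_2|\cos\theta = |\vec{w}_2|$; under the additional hypothesis $|\vec{w}_1|\ne|\vec{w}_2|$ together with our assumption $|\vec{w}_1|\le|\vec{w}_2|$, the inequality is strict. This settles both statements. The argument takes place in the two-dimensional plane spanned by $\vec{w}_1$ and $\vec{w}_2$, so the dimension $d\ge 2$ plays no role beyond guaranteeing that such a plane exists; no obstacle is anticipated, as the whole step is a one-line computation with the cosine rule.
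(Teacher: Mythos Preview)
Your proof is correct and follows essentially the same approach as the paper: both assume $|\vec{w}_1|\le|\vec{w}_2|$ and reduce to a two-dimensional trigonometric fact. The paper normalizes by $|\vec{w}_2|$ to place both vectors in the closed unit ball and then asserts the distance is less than $1$, whereas you make this step explicit via the law of cosines; your version is arguably cleaner since the paper's ``therefore'' implicitly relies on the very computation you spell out.
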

\begin{proof}
For the first part of the proposition, suppose without loss of generality that $|\vec{w}_1|\le|\vec{w}_2|.$ Then the vectors $\vec{w}_1/|\vec{w}_2|$ and $\vec{w}_2/|\vec{w}_2|$ lie in the closed unit ball and are separated by an angle less than $\pi/3$. Therefore
\begin{equation}
\left|\frac{\vec{w}_1}{|\vec{w}_2|}-\frac{\vec{w}_2}{|\vec{w}_2|}\right|<1,
\end{equation}
and the result follows. Furthermore, under the assumptions of the second part of the proposition, we draw the same conclusion.
\end{proof}
Next we will prove several propositions which place various restrictions on the integer $K=\scrG_\scrD(M)$ defined at the start of this section; recall (V1)--(V3).

\begin{prop}\label{prop.SymSmallT}
If for some integer $1\le i\le K$, we have that $u_i\in (-1/2,1/2)$ and $\vec{v}_i\in\RR_{>0}\mc{S}$, then we must have that $i=K$.
\end{prop}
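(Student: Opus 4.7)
The plan is to exploit two symmetries simultaneously: the central reflection $\ZZ^{d+1}M=-\ZZ^{d+1}M$ of the lattice, and the $\pi$-rotation symmetry of $\scrS$ that is built into its definition $\scrS=[-\tau/2,-\psi/2)\cup[\psi/2,\tau/2)$. Because $\scrS$ is invariant under $\theta\mapsto\theta+\pi$, the hypothesis $\vec v_i\in\RR_{>0}\scrS$ forces $-\vec v_i\in\RR_{>0}\scrS\subseteq\RR_{>0}\scrD$. Combined with the lattice symmetry, this produces a ``companion'' point $(-u_i,-\vec v_i)\in\scrQ_\scrD(M)$ of exactly the same $\vec v$-length as $(u_i,\vec v_i)$.

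The heart of the argument is then the elementary observation that, whenever $u_i\in(-1/2,1/2)$, for every $t\in(0,1)$ at least one of the two numbers $u_i,-u_i$ lies in the open interval $(-t,1-t)$. I would verify this by splitting on the sign of $t-1/2$: if $t\le 1/2$ then $\max(u_i,-u_i)\in[0,1/2)\subseteq(-t,1-t)$, while if $t\ge 1/2$ then $\min(u_i,-u_i)\in(-1/2,0]\subseteq(-t,1-t)$. Consequently, for every $t\in(0,1)$ at least one member of the pair $\{(u_i,\vec v_i),\,(-u_i,-\vec v_i)\}$ lies in $\scrQ_\scrD(M,t)$, so by the definition \eqref{FMt} of $F_\scrD$ one obtains $F_\scrD(M,t)\le|\vec v_i|$ for every $t\in(0,1)$.

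To conclude, property (V3) associates to each $1\le j\le K$ a parameter $t_j\in(0,1)$ with $F_\scrD(M,t_j)=|\vec v_j|$, and the bound above gives $|\vec v_j|\le|\vec v_i|$ for every $j$; the strict ordering (V1) then forces $j\le i$, i.e.\ $K=i$. I do not expect any genuine obstacle: the whole argument reduces to a one-line pigeonhole on the real line, powered by the two reflection symmetries. The only point that demands a moment of care is the boundary case $t=1/2$, where $(-t,1-t)=(-1/2,1/2)$ directly contains $u_i$, so the companion point is not even needed.
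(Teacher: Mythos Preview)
Your argument is correct and is essentially the same as the paper's: both exploit the companion point $(-u_i,-\vec v_i)\in\scrQ_\scrD(M)$ arising from the $\pi$-rotation symmetry of $\scrS$, and both verify that for every $t\in(0,1)$ one of $\pm u_i$ lies in $(-t,1-t)$, giving $F_\scrD(M,t)\le|\vec v_i|$ and hence $i=K$ via (V1)--(V3). The only cosmetic difference is that the paper splits the case analysis on the sign of $u_i$ whereas you split on whether $t\le 1/2$ or $t\ge 1/2$.
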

\begin{proof}
	Suppose first that $u_i\in[0,1/2)$. Then for any $0<t<1-u_i$, we have that $(u_i,\vec v_i)\in\scrQ_\scrD(M,t)$ and thus $|\vec{v}_i|\geq F_\scrD(M,t)$. By the symmetry of $\scrS$ we have $(-u_i,-\vec v_i)\in\scrQ_\scrD(M)$. Thus for any $u_i<t<1$, we have that $(-u_i,-\vec v_i)\in\scrQ_\scrD(M,t)$ and thus $|\vec{v}_i|\geq F_\scrD(M,t)$. Since $u_i<1/2$, we conclude that $|\vec{v}_i|\geq F_\scrD(M,t)$ for all $0<t<1$, which proves $i=K$. The case $u_i\in(-1/2,0]$ follows from the same argument.
\end{proof}

\begin{prop}\label{prop.v_KSmallT}
If for some $i$ and $j$ with $1\le i,j\le K,$ we have that $u_i\in (-1/2,0]$ and $u_j\in [0,1/2)$, then $i=K$ or $j=K$.
\end{prop}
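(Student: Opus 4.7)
The plan is to argue by contradiction, generalising the argument used for Proposition \ref{prop.SymSmallT}. Assume $i<K$ and $j<K$; by (V1) this gives $\max\{|\vec v_i|,|\vec v_j|\}<|\vec v_K|$. I will show that for \emph{every} $t\in(0,1)$ at least one of the two vectors $(u_i,\vec v_i)$, $(u_j,\vec v_j)$ lies in $\scrQ_\scrD(M,t)$, which forces $F_\scrD(M,t)\le\max\{|\vec v_i|,|\vec v_j|\}$ for all $t$. Invoking property (V3) at index $K$ then yields a $t_0\in(0,1)$ with $|\vec v_K|=F_\scrD(M,t_0)\le\max\{|\vec v_i|,|\vec v_j|\}<|\vec v_K|$, a contradiction.

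First I would unwind the definition of $\scrQ_\scrD(M,t)$. Both $(u_i,\vec v_i)$ and $(u_j,\vec v_j)$ already satisfy the directional constraint $\vec v\in\RR_{>0}\scrD$ since they come from $\scrQ_\scrD(M)$ (via (V3)). The only condition that depends on $t$ is $-t<u<1-t$, i.e.\ $t\in(-u,1-u)$.

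Next I would analyse the two ranges:
\begin{itemize}
\item For $u_i\in(-1/2,0]$ we have $-u_i\in[0,1/2)$ and $1-u_i\ge 1$, so
$$(-u_i,1-u_i)\supseteq[1/2,1).$$
Hence $(u_i,\vec v_i)\in\scrQ_\scrD(M,t)$ for every $t\in[1/2,1)$.
\item For $u_j\in[0,1/2)$ we have $-u_j\in(-1/2,0]$ and $1-u_j>1/2$, so
$$(-u_j,1-u_j)\supseteq(0,1/2].$$
Hence $(u_j,\vec v_j)\in\scrQ_\scrD(M,t)$ for every $t\in(0,1/2]$.
\end{itemize}
Since $(0,1)=(0,1/2]\cup[1/2,1)$, every $t\in(0,1)$ is covered by one of the two cases, which is the key step. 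The conclusion $F_\scrD(M,t)\le\max\{|\vec v_i|,|\vec v_j|\}$ then follows directly from the definition \eqref{FMt}, and the contradiction with (V3) at $K$ completes the proof.

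There is no real obstacle here: the argument is a clean case-split on whether $t\le 1/2$ or $t\ge 1/2$, mirroring the proof of Proposition \ref{prop.SymSmallT}, with $(u_i,\vec v_i)$ and $(u_j,\vec v_j)$ playing the roles previously played by the vector and its reflection. Note that the overlap point $t=1/2$ is harmless because both inclusions are valid there, so no boundary argument is needed.
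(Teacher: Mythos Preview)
Your proof is correct and follows essentially the same approach as the paper's: both show that the two intervals of admissible $t$-values for $(u_i,\vec v_i)$ and $(u_j,\vec v_j)$ together cover $(0,1)$, so $F_\scrD(M,t)\le\max\{|\vec v_i|,|\vec v_j|\}$ for all $t$, forcing one of the indices to equal $K$ via (V3) and (V1). The paper uses the slightly sharper intervals $(-u_i,1)$ and $(0,1-u_j)$ and omits the explicit contradiction framing, but the substance is identical.
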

\begin{proof}
Under the hypotheses of the proposition, we have $|\vec{v}_i|\geq F_\scrD(M,t)$ for $-u_i<t<1$ and $|\vec{v}_j|\geq F_\scrD(M,t)$ for $0<t<1-u_j$. This covers all possible values of $t\in(0,1)$, and shows that $i$ or $j$ must equal $K$.
\end{proof}

\begin{prop}\label{prop.NoSameTVal}
	If $1\le i,j\le K$ and $u_i=u_j$, then $i=j$.
\end{prop}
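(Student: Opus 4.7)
The plan is to argue by contradiction, exploiting the minimality built into condition (V3). Suppose for contradiction that $i \neq j$ with $u_i = u_j$; without loss of generality assume $i < j$, so by (V1) we have $|\vec v_i| < |\vec v_j|$.

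By condition (V3) applied to index $j$, there exists some $t \in (0,1)$ such that $(u_j, \vec v_j) \in \scrQ_\scrD(M,t)$ and $F_\scrD(M,t) = |\vec v_j|$. The membership $(u_j, \vec v_j) \in \scrQ_\scrD(M,t)$ means in particular that $-t < u_j < 1-t$, together with $\vec v_j \in \RR_{>0}\scrD$. Because $u_i = u_j$, the vector $(u_i, \vec v_i)$ satisfies the same constraint $-t < u_i < 1-t$, and since $(u_i, \vec v_i) \in \scrQ_\scrD(M)$ we already have $\vec v_i \in \RR_{>0}\scrD$. Hence $(u_i, \vec v_i) \in \scrQ_\scrD(M,t)$ as well.

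From the definition \eqref{FMt} of $F_\scrD$, this forces $F_\scrD(M,t) \le |\vec v_i|$. Combining with $F_\scrD(M,t) = |\vec v_j|$ and $|\vec v_i| < |\vec v_j|$ yields the contradiction $|\vec v_j| \le |\vec v_i| < |\vec v_j|$. Therefore $i = j$.

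There is no real obstacle here; the statement follows directly from unraveling the three conditions (V1)--(V3). The only point worth being careful about is making sure that the same $t$ witnessing the minimality of $|\vec v_j|$ can be reused for $(u_i, \vec v_i)$, which is immediate once one observes that the defining constraints of $\scrQ_\scrD(M,t)$ depend on the pair $(u, \vec v)$ only through the inequality $-t < u < 1-t$ and the directional condition $\vec v \in \RR_{>0}\scrD$, both of which are inherited by $(u_i, \vec v_i)$ from $(u_j, \vec v_j)$ under the assumption $u_i = u_j$.
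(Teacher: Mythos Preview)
Your proof is correct and follows essentially the same approach as the paper: argue by contradiction, use (V1) to get $|\vec v_i|<|\vec v_j|$, and then observe that any $t$ witnessing (V3) for index $j$ also admits $(u_i,\vec v_i)\in\scrQ_\scrD(M,t)$, forcing $F_\scrD(M,t)\le|\vec v_i|<|\vec v_j|$. The paper phrases the last step slightly more tersely (noting that $F_\scrD(M,t)<|\vec v_j|$ holds for \emph{every} $t$ with $-t<u_i=u_j<1-t$, which then contradicts (V3)), but the logic is identical.
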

\begin{proof}
Suppose by way of contradiction that $i\not= j$, and without loss of generality that $i<j$. Then for any $t\in (0,1)$ satisfying $-t<u_i=u_j\le 1-t$, we would have by (V1) that 
\begin{equation}
F_\scrD(M,t) \le |\vec{v}_i|<|\vec{v}_j|.
\end{equation}
However this contradicts condition (V3), so we must have that $i=j$.
\end{proof}

\begin{prop}\label{prop.ADDI2} Let $1\le i\le K$ and $(u,\vec v)\in\scrQ_\scrD(M)$. 
If
	\begin{equation}-1<u_i\leq u\leq 0 \qquad\text{or}\qquad 0\leq u\leq u_i<1\end{equation}
then 	$|\vec{v}_i|\leq |\vec{v}|$.
\end{prop}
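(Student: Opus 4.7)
The plan is to produce a single value $t \in (0,1)$ for which both $(u_i, \vec v_i)$ and $(u, \vec v)$ lie in $\scrQ_\scrD(M,t)$, and for which $|\vec v_i|$ equals $F_\scrD(M,t)$. Once such a $t$ is in hand, the definition \eqref{FMt} of $F_\scrD$ immediately yields $|\vec v_i| = F_\scrD(M,t) \leq |\vec v|$, which is precisely what we want.

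To construct $t$, I would invoke property (V3) to pick $t_i \in (0,1)$ with $|\vec v_i| = F_\scrD(M, t_i)$ and $(u_i, \vec v_i) \in \scrQ_\scrD(M, t_i)$; by \eqref{QMt} the latter inclusion is equivalent to $-t_i < u_i < 1 - t_i$. The remaining task is to check that the same $t_i$ also satisfies $-t_i < u < 1 - t_i$, so that $(u, \vec v) \in \scrQ_\scrD(M, t_i)$ as well. I would treat the two cases separately. If $0 \leq u \leq u_i < 1$, then the lower bound $-t_i < u$ holds trivially because $t_i > 0 \geq -u$, while the upper bound $u < 1 - t_i$ follows from $u \leq u_i < 1 - t_i$. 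If instead $-1 < u_i \leq u \leq 0$, then the upper bound $u < 1 - t_i$ holds trivially because $u \leq 0 < 1 - t_i$, while the lower bound $-t_i < u$ follows from $-t_i < u_i \leq u$. In either regime we obtain $(u, \vec v) \in \scrQ_\scrD(M, t_i)$, and the proposition follows.

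The statement is really a bookkeeping consequence of the definitions \eqref{QMt} and \eqref{FMt}: moving the first coordinate from $u_i$ toward (but not past) $0$ only enlarges the set of $t\in (0,1)$ for which membership in $\scrQ_\scrD(M,t)$ persists, so any witness $t_i$ guaranteed by (V3) for $(u_i, \vec v_i)$ remains a witness for $(u, \vec v)$. I do not anticipate any serious obstacle; the only point requiring care is the direction of the strict inequalities, which works out correctly because the hypothesis keeps $u$ on the same side of $0$ as $u_i$, so the constraint that tightens when passing from $u_i$ to $u$ is the one that was already slack for $u_i$.
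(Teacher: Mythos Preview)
Your proof is correct and follows essentially the same approach as the paper: pick $t_i$ via (V3) so that $|\vec v_i| = F_\scrD(M,t_i)$ and $(u_i,\vec v_i)\in\scrQ_\scrD(M,t_i)$, then verify that the hypothesis $u$ lies between $u_i$ and $0$ forces $(u,\vec v)\in\scrQ_\scrD(M,t_i)$ as well, whence $|\vec v_i|=F_\scrD(M,t_i)\le|\vec v|$. The only cosmetic difference is that the paper treats the case $0\le u\le u_i<1$ and dismisses the other by symmetry, whereas you write out both cases explicitly.
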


\begin{proof}
Suppose that $0\leq u\leq u_i<1$; the other case follows by symmetry. By (V3) there exist $t_i\in(0,1-u_i)$ such that $F_\scrD(M,t_i) = |\vec{v}_i|$. Furthermore $F_\scrD(M,t) \le |\vec{v}|$ for all $t\in (0,1-u)$. Thus, taking $t=t_i\in(0,1-u)$, we have $|\vec{v}_i|\leq |\vec{v}|$.
\end{proof}

\begin{prop}\label{prop.ADDI} Let $1\le i,j\le K$. If
	\begin{equation}-1<u_i<u_j\leq 0 \qquad\text{or}\qquad 0\leq u_j<u_i<1\end{equation}
then 	$|\vec{v}_i|<|\vec{v}_j|$ and $i<j$.
\end{prop}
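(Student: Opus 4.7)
My plan is to derive Proposition \ref{prop.ADDI} as an almost immediate consequence of the preceding Proposition \ref{prop.ADDI2}, upgraded from a weak to a strict inequality using Proposition \ref{prop.NoSameTVal} together with the strict ordering in condition (V1).

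By the symmetry of the two hypotheses under the reflection $u \mapsto -u$ (which preserves $\scrQ_\scrD(M)$ in the relevant statements), I would treat only the case $0 \le u_j < u_i < 1$ and note that the other case follows verbatim. Applying Proposition \ref{prop.ADDI2} with the fixed index $i$ and the element $(u, \vec v) = (u_j, \vec v_j) \in \scrQ_\scrD(M)$, the hypothesis $0 \le u_j \le u_i < 1$ is exactly what is needed, and I obtain the weak inequality $|\vec v_i| \le |\vec v_j|$.

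To promote this to a strict inequality, I would argue that $i \ne j$. Indeed, the assumption $u_j < u_i$ gives $u_i \ne u_j$, and Proposition \ref{prop.NoSameTVal} (in contrapositive form) then forces $i \ne j$. Combined with (V1), which says the lengths $|\vec v_1| < \cdots < |\vec v_K|$ are strictly increasing and in particular pairwise distinct, the weak inequality $|\vec v_i| \le |\vec v_j|$ becomes $|\vec v_i| < |\vec v_j|$. Reading this back through (V1) immediately gives $i < j$.

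There is no real obstacle here; the proof is essentially bookkeeping. The only subtlety worth flagging is making sure that Proposition \ref{prop.ADDI2} is being applied with its hypothesis in non-strict form (i.e. using $u_j \le u_i$ rather than $u_j < u_i$), so that the strict inequality $u_j < u_i$ in the present statement is only consumed once, namely to invoke Proposition \ref{prop.NoSameTVal}. The symmetric case $-1 < u_i < u_j \le 0$ is handled identically, applying Proposition \ref{prop.ADDI2} with the negative-$u$ branch of its hypothesis.
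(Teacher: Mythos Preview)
Your argument is correct and matches the paper's one-line proof, which simply says that in view of (V1) the claim is a direct consequence of Proposition~\ref{prop.ADDI2}. One small remark: invoking Proposition~\ref{prop.NoSameTVal} is unnecessary (and the contrapositive you cite goes the wrong way); the implication $u_i\neq u_j\Rightarrow i\neq j$ is trivially true since $u_i$ is determined by $i$, so $i\neq j$ follows immediately from $u_j<u_i$ without any auxiliary result.
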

\begin{proof}
In view of (V1), this is a direct consequence of Proposition \ref{prop.ADDI2}.
\end{proof}

\begin{prop}\label{prop.LargeTSmallAngle2}
	Let $\tau\geq \pi$, $1\le j\le K$ and $(u,\vec{v})\in\scrQ_\scrD(M)$ 
	such that $\vec{v}_j\neq \vec{v}$ 
	and 
	$0<|\vec{v}|\le|\vec{v}_j|$. 
	Suppose the angle between the vectors $\vec{v}_j$ and $\vec{v}$ is less than $\pi/3$. If 
	\begin{equation}-1<u\leq u_j\leq -1/2 \qquad\text{or}\qquad 1/2\leq u_j\leq u<1,\end{equation}
	then $j=K$.
\end{prop}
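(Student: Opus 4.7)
The plan is to show that $F_\scrD(M,t)\leq|\vec{v}_j|$ for every $t\in(0,1)$. Since by (V1)--(V3) the image of $F_\scrD(M,\cdot)$ equals $\{|\vec{v}_1|,\ldots,|\vec{v}_K|\}$ with maximum $|\vec{v}_K|$, this bound will force $|\vec{v}_K|\leq|\vec{v}_j|$, and together with the strict ordering (V1) this yields $j=K$. I will write out the case $1/2\leq u_j\leq u<1$; the case $-1<u\leq u_j\leq -1/2$ is handled by the same argument with all signs reversed.

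The key preliminary observation is that, since $\tau\geq\pi$, the arc $\scrD$ together with its antipode cover the unit circle $\SS_1^1$. Applied to the nonzero vector $\vec{v}_j-\vec{v}$, this forces either (Case~A) $\vec{v}_j-\vec{v}\in\RR_{>0}\scrD$, or (Case~B) $\vec{v}-\vec{v}_j\in\RR_{>0}\scrD$. The heart of the argument is to rule out Case~B.

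Suppose we are in Case~B. Because $u-u_j\in[0,1/2)\subset(-1,1)$, the lattice point $(u-u_j,\vec{v}-\vec{v}_j)$ then lies in $\scrQ_\scrD(M)$, and Proposition~\ref{prop.LengthReduc} (applied to the nonzero distinct vectors $\vec{v}_j,\vec{v}$, which are separated by an angle less than $\pi/3$) gives $|\vec{v}-\vec{v}_j|<|\vec{v}_j|$. By (V3), choose $t_j\in(0,1-u_j)$ with $F_\scrD(M,t_j)=|\vec{v}_j|$. I claim $(u-u_j,\vec{v}-\vec{v}_j)\in\scrQ_\scrD(M,t_j)$: the lower constraint $-t_j<u-u_j$ is immediate from $u\geq u_j$, while the upper constraint $u-u_j<1-t_j$ reduces to $t_j<1+u_j-u$; this follows from $t_j<1-u_j$ combined with $1-u_j\leq 1+u_j-u$, that is, $u\leq 2u_j$, which holds because $u<1\leq 2u_j$. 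But then $F_\scrD(M,t_j)\leq|\vec{v}-\vec{v}_j|<|\vec{v}_j|$, contradicting the definition of $t_j$.

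Hence we are in Case~A, so $(u_j-u,\vec{v}_j-\vec{v})\in\scrQ_\scrD(M)$ and its length is less than $|\vec{v}_j|$. The coverage is now straightforward: for $t\in(0,1-u_j)$ the vector $(u_j,\vec{v}_j)$ lies in $\scrQ_\scrD(M,t)$, giving $F_\scrD(M,t)\leq|\vec{v}_j|$; and for $t\in(u-u_j,1)$ the new vector $(u_j-u,\vec{v}_j-\vec{v})$ lies in $\scrQ_\scrD(M,t)$ (the $u$-constraints follow routinely from $t>u-u_j$ and $t<1\leq 1+u-u_j$), again bounding $F_\scrD(M,t)$ by $|\vec{v}_j|$. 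Since $u-u_j<1/2\leq 1-u_j$, these two open intervals cover $(0,1)$, so $F_\scrD(M,t)\leq|\vec{v}_j|$ throughout, as required. The main obstacle is the inequality juggling in Case~B, which hinges on the exact threshold $1/2$ in the hypothesis: it is precisely this that guarantees $2u_j\geq 1>u$, so that the admissible $t$-interval for $(u_j,\vec{v}_j)$ is contained in the $t$-interval for $(u-u_j,\vec{v}-\vec{v}_j)$.
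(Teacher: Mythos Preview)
Your argument is correct and follows essentially the same route as the paper's proof: split into the two cases according to which of $\pm(\vec v_j-\vec v)$ lies in $\RR_{>0}\scrD$, rule out $\vec v-\vec v_j\in\RR_{>0}\scrD$ by producing a shorter witness at a value of $t$ where $|\vec v_j|$ is realised, and then in the remaining case cover $(0,1)$ by the two intervals coming from $(u_j,\vec v_j)$ and $(u_j-u,\vec v_j-\vec v)$. The only difference is cosmetic: where the paper cites Proposition~\ref{prop.ADDI2} to rule out Case~B, you reprove that step inline via (V3).

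One small slip to fix: in your final coverage step you write ``$u-u_j<1/2\le 1-u_j$''. The second inequality $1/2\le 1-u_j$ is equivalent to $u_j\le 1/2$, which need not hold (you only have $u_j\ge 1/2$). What you actually need, and what suffices, is simply $u-u_j<1-u_j$, i.e.\ $u<1$, which is given. With that correction the coverage argument goes through unchanged.
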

\begin{proof}
	We consider the case $1/2\leq u_j\leq u<1$; the proof for the alternative follows from the same argument by symmetry. By the assumption on the angle between the vectors $\vec{v}_j$ and $\vec{v}$, we have by Proposition \ref{prop.LengthReduc} that
	$|\vec{v}_j-\vec{v}|<|\vec{v}_j|$. Since $\vec{v}_j\neq \vec{v}$ and $\tau\geq\pi$, we have that at least one of $\vec{v}_j-\vec{v}$, $\vec{v}-\vec{v}_j$ is in $\RR_{>0}\scrD$.
	
	First suppose that $\vec{v}-\vec{v}_j\in\RR_{>0}\scrD$. Then, since 
	$$0\leq u-u_j <1/2\leq u_j<1, $$ 
	we have $(u-u_j,\vec v-\vec v_j)\in\scrQ_\scrD(M)$ and by Proposition \ref{prop.ADDI2} that $|\vec{v}-\vec{v}_j|\geq |\vec{v}_j|$. This is a contradiction, so we conclude that $\vec{v}-\vec{v}_j\notin\RR_{>0}\scrD$. 
	
	The only other possibility is that $\vec{v}_j-\vec{v}\in\RR_{>0}\scrD$. In this case, $(u_j-u,\vec v_j-\vec v)\in\scrQ_\scrD(M),$
	\begin{equation}
	u_j-u\leq 0\quad\text{and}\quad u-u_j<1-u_j .
	\end{equation}
	It follows from this that, for $u-u_j< t<1$, we have $F_\scrD(M,t)\leq |\vec{v}-\vec{v}_j|<  |\vec{v}_j|$ and for $0< t<1-u_j$ (which in particular holds for all $t$ with $0 < t \leq u-u_j$), we have $F_\scrD(M,t)\leq |\vec{v}_j|$. Therefore $j=K$.
\end{proof}

\begin{prop}\label{prop.LargeTSmallAngle} Let $\tau\geq \pi$ and $1\le i,j\le K$. Suppose the angle between the vectors $\vec{v}_i$ and $\vec{v}_j$ is less than $\pi/3$. If 
	\begin{equation}-1<u_i<u_j\leq -1/2 \qquad\text{or}\qquad 1/2\leq u_j<u_i<1,\end{equation}
	then $j=K$.
\end{prop}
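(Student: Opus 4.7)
The plan is to deduce Proposition \ref{prop.LargeTSmallAngle} directly from Proposition \ref{prop.LargeTSmallAngle2} by substituting the pair $(u_i,\vec{v}_i)$ in place of the generic $(u,\vec v)$ there. To make this substitution legal, I need to verify the hypotheses of Proposition \ref{prop.LargeTSmallAngle2}, and the only non-trivial ingredient is the length comparison $0<|\vec v_i|\leq|\vec v_j|$.

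First I would observe that $i\neq j$ by Proposition \ref{prop.NoSameTVal}, since the hypothesis gives $u_i\neq u_j$. By condition (V1) this forces $|\vec v_i|\neq|\vec v_j|$, and in particular $\vec v_i\neq\vec v_j$ and $|\vec v_i|>0$. The crucial comparison is then settled by invoking Proposition \ref{prop.ADDI}: in the case $1/2\leq u_j<u_i<1$ we are inside its hypothesis $0\leq u_j<u_i<1$, while in the case $-1<u_i<u_j\leq -1/2$ we are inside its hypothesis $-1<u_i<u_j\leq 0$. In either case Proposition \ref{prop.ADDI} yields $|\vec v_i|<|\vec v_j|$ (and, as a byproduct, $i<j$).

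With this in hand, set $(u,\vec v):=(u_i,\vec v_i)$ in Proposition \ref{prop.LargeTSmallAngle2}. By construction $(u_i,\vec v_i)\in\scrQ_\scrD(M)$, the vectors $\vec v_i$ and $\vec v_j$ are distinct with $0<|\vec v_i|\leq|\vec v_j|$, the angle between them is the same angle as in the hypothesis, hence less than $\pi/3$, and the inequalities on $u,u_j$ translate respectively into $1/2\leq u_j\leq u_i<1$ or $-1<u_i\leq u_j\leq -1/2$, exactly matching the two cases of Proposition \ref{prop.LargeTSmallAngle2}. That proposition therefore delivers $j=K$, which is the desired conclusion.

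I do not expect a genuine obstacle here; once the ordering of $|\vec v_i|$ and $|\vec v_j|$ is pinned down via Proposition \ref{prop.ADDI}, the claim is essentially a corollary of Proposition \ref{prop.LargeTSmallAngle2}. The only subtle point is ensuring that the length inequality runs in the correct direction (the shorter vector must be the ``$\vec v$'' in Proposition \ref{prop.LargeTSmallAngle2}), and the sign conventions of Proposition \ref{prop.ADDI} happen to give precisely this.
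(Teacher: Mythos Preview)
Your proposal is correct and matches the paper's approach: the paper's proof is the single line ``This is a direct corollary of Proposition \ref{prop.LargeTSmallAngle2} (take $u=u_i$),'' and you have simply spelled out the hypothesis checks (in particular, using Proposition \ref{prop.ADDI} to secure $|\vec v_i|<|\vec v_j|$) that make this substitution legitimate.
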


\begin{proof}
This is a direct corollary of Proposition \ref{prop.LargeTSmallAngle2} (take $u=u_i$). 
\end{proof}

\begin{prop}\label{prop.SmallTSmallAngle} Let $\tau\geq \pi$ and $1\le i,j\le K$. Suppose the angle between the vectors $\vec{v}_i$ and $\vec{v}_j$ is less than $\pi/3$. If
	\begin{equation}-1/2<u_i<u_j\leq 0 \qquad\text{or}\qquad 0\leq u_j<u_i<1/2,\end{equation}
	then $\vec{v}_j-\vec{v}_i\notin\RR_{>0}\scrD$ and
		\begin{equation}
		|\vec{v}_i|\le |\vec{v}_i-\vec{v}_j|<|\vec{v}_j|.
		\end{equation}
\end{prop}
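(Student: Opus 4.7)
The plan is to split the claim into three linked pieces: (a) the upper bound $|\vec{v}_i-\vec{v}_j|<|\vec{v}_j|$, (b) the nonmembership $\vec{v}_j-\vec{v}_i\notin\RR_{>0}\scrD$, and (c) the lower bound $|\vec{v}_i|\le|\vec{v}_i-\vec{v}_j|$. Each piece is produced from a different tool in the toolbox of propositions already developed above.

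For (a), Proposition~\ref{prop.ADDI} applies in either hypothesis case and gives $|\vec{v}_i|<|\vec{v}_j|$, so the two vectors are distinct and $\vec{v}_j$ is strictly longer. Since the angle between them is strictly less than $\pi/3$, Proposition~\ref{prop.LengthReduc} immediately yields $|\vec{v}_i-\vec{v}_j|<|\vec{v}_j|$.

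For (b), I would argue by contradiction: assume $\vec{v}_j-\vec{v}_i\in\RR_{>0}\scrD$, so that $(u_j-u_i,\vec{v}_j-\vec{v}_i)\in\scrQ_\scrD(M)$. Use condition (V3) to pick $t^*\in(0,1)$ with $F_\scrD(M,t^*)=|\vec{v}_j|$. Because $|\vec{v}_i|<|\vec{v}_j|$, the point $(u_i,\vec{v}_i)$ cannot lie in $\scrQ_\scrD(M,t^*)$, and, together with the sign information on $u_i,u_j$, this pins $t^*$ down to a narrow window: in the case $-1/2<u_i<u_j\le 0$ one finds $-u_j<t^*\le -u_i$, while symmetrically in the case $0\le u_j<u_i<1/2$ one finds $1-u_i\le t^*<1-u_j$. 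The crux is then to verify $-t^*<u_j-u_i<1-t^*$, i.e.\ that $(u_j-u_i,\vec{v}_j-\vec{v}_i)$ itself belongs to $\scrQ_\scrD(M,t^*)$. Here the strict bound $|u_i|<1/2$ is exactly what is needed to close the inequality (e.g.\ in the first case, $u_j-u_i\le -u_i<1-(-u_i)\le 1-t^*$ because $-u_i<1/2$). This gives $|\vec{v}_j-\vec{v}_i|\ge F_\scrD(M,t^*)=|\vec{v}_j|$, contradicting (a).

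For (c), the hypothesis $\tau\ge\pi$ together with the half-openness of $\scrD$ implies that for every nonzero $\vec w\in\RR^2$ at least one of $\vec w,-\vec w$ lies in $\RR_{>0}\scrD$ (the same dichotomy already invoked in the proof of Proposition~\ref{prop.LargeTSmallAngle2}). Combining this with (b) forces $\vec{v}_i-\vec{v}_j\in\RR_{>0}\scrD$, so $(u_i-u_j,\vec{v}_i-\vec{v}_j)\in\scrQ_\scrD(M)$. In both hypothesis cases $u_i-u_j$ lies between $u_i$ and $0$ (inclusive), so Proposition~\ref{prop.ADDI2} applied with index $i$ yields $|\vec{v}_i|\le|\vec{v}_i-\vec{v}_j|$, completing the chain of inequalities.

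The main obstacle is step (b): one must locate $t^*$ precisely and then check the inclusion $(u_j-u_i,\vec{v}_j-\vec{v}_i)\in\scrQ_\scrD(M,t^*)$. The hypothesis $|u_i|,|u_j|<1/2$ is sharp for exactly this computation, so the two sub-cases need to be written out separately; however, once the correct window for $t^*$ is identified, the underlying arithmetic is routine bookkeeping.
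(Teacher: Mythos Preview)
Your argument is correct and follows essentially the same route as the paper: Proposition~\ref{prop.ADDI} plus Proposition~\ref{prop.LengthReduc} for the upper bound, a contradiction via (V3) for the nonmembership, and Proposition~\ref{prop.ADDI2} for the lower bound. The only cosmetic difference is in step (b): the paper covers all $t\in(0,1)$ with the two witnesses $(u_i,\vec v_i)$ and $(u_j-u_i,\vec v_j-\vec v_i)$ on overlapping intervals (using $u_i-u_j<1-u_i$), whereas you localise the specific $t^*$ from (V3) and check the difference vector lands in $\scrQ_\scrD(M,t^*)$; both hinge on the same inequality $|u_i|<1/2$.
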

\begin{proof}
	We assume $0\leq u_j<u_i<1/2$; the other case follows by symmetry.
	It follows from Proposition \ref{prop.ADDI} that $i<j$ and $|\vec{v}_i|<|\vec{v}_j|$, and it follows from Proposition \ref{prop.LengthReduc} that
	$|\vec{v}_i-\vec{v}_j|<|\vec{v}_j|$.
	
	Suppose, contrary to what we are trying to prove, that $\vec{v}_j-\vec{v}_i\in\RR_{>0}\scrD$. Then, since $0<u_i-u_j<1/2$, we have $(u_j-u_i,\vec v_j-\vec v_i)\in\scrQ_\scrD(M)$ and hence for $u_i-u_j<t<1$ we have $F_\scrD(M,t)\leq |\vec{v}_i-\vec{v}_j| <|\vec{v}_j|$.
	Furthermore, for $0<t<1-u_i$ we have $F_\scrD(M,t)\leq |\vec v_i|<|\vec v_j|$. Now $u_i-u_j<1-u_i$, so $F_\scrD(M,t)<|\vec v_j|$ for all $t\in(0,1)$.
	But by (V3) there is $t\in(0,1)$ such that $F_\scrD(M,t)= |\vec v_j|$. This is a contradiction, so we conclude that $\vec{v}_j-\vec{v}_i\notin\RR_{>0}\scrD$. 
	
	It remains to show that $|\vec{v}_i|\le |\vec{v}_i-\vec{v}_j|$. Since $\vec{v}_j-\vec{v}_i\notin\RR_{>0}\scrD$ and $\tau\geq \pi$ we have $\vec{v}_i-\vec{v}_j\in\RR_{>0}\scrD$. Then $(u_i-u_j,\vec v_i-\vec v_j)\in\scrQ_\scrD(M)$ with $0<u_i-u_j<1/2$. Thus for $0<t<1-(u_i-u_j)$ we have $F_\scrD(M,t)\leq |\vec{v}_i-\vec{v}_j|$. Note that by (V3) there exists a $t_i\in(0,1-u_i)$ such that $F_\scrD(M,t_i)=|v_i|$. Now $1-u_i\leq 1-(u_i-u_j)$ and so $F_\scrD(M,t_i)=|v_i|\leq |\vec{v}_i-\vec{v}_j|$ as needed.
\end{proof}

The previous proposition will be used in the proof of Theorem \ref{thm.2DBoundedGapsFULL} in conjunction with the following two elementary geometric propositions.
\begin{prop}\label{prop.AsymCone1}
	Suppose that 
	\begin{itemize}
\item we are in case (C1) or (C3) and $\vec{w}_1,\vec{w}_2\in\RR_{>0}\mc{A}_0$, or 
\item we are in case (C2) and $\vec{w}_1,\vec{w}_2\in\RR_{>0}\mc{A}_{-1}$, or
\item we are in case (C2) and $\vec{w}_1,\vec{w}_2\in\RR_{>0}\mc{A}_{1}$.
\end{itemize}
If 
	\begin{equation}\label{eqn.AsymConeProp1}
	|\vec{w}_1|\le |\vec{w}_1-\vec{w}_2|<|\vec{w}_2|,
	\end{equation}
	then $\vec{w}_2-\vec{w}_1\in\RR_{>0}\scrD$.
\end{prop}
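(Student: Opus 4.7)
The plan is to work in polar coordinates. I would write $\vec w_j = r_j e^{i\theta_j}$ (identifying $\RR^2 \cong \CC$) with $r_j>0$ and $\theta_j\in\mc A$, where $\mc A$ denotes the relevant asymmetric subset: $\mc A_0$ in cases (C1) and (C3), and $\mc A_{\pm 1}$ in case (C2). The key common feature, which I would emphasize at the outset, is that in every hypothesised case $\mc A$ is a half-open interval of arclength at most $\pi/3$, so $\alpha:=|\theta_2-\theta_1|\le\pi/3$. Note also that $\vec w_2-\vec w_1\neq \vec 0$ (otherwise $0=|\vec w_1-\vec w_2|\ge|\vec w_1|>0$), so its direction is well defined.

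Next I would apply the law of cosines to $|\vec w_1|\le|\vec w_1-\vec w_2|$ to convert it into the scalar inequality $s:=r_2/r_1\ge 2\cos\alpha$, and then study the direction of
\begin{equation}
\vec w_2-\vec w_1 = r_1\bigl(se^{i\theta_2}-e^{i\theta_1}\bigr)
\end{equation}
as a function of $s\in[2\cos\alpha,\infty)$. Writing $2\cos\alpha=e^{i\alpha}+e^{-i\alpha}$ together with $\alpha=\pm(\theta_2-\theta_1)$, a short computation gives that at the extreme value $s=2\cos\alpha$ one has $\vec w_2-\vec w_1=r_1e^{i(2\theta_2-\theta_1)}$, while as $s\to\infty$ the direction tends to $\theta_2$. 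Since
\begin{equation}
\frac{d}{ds}\arg\bigl(se^{i\theta_2}-e^{i\theta_1}\bigr)=\Im\frac{e^{i\theta_2}}{se^{i\theta_2}-e^{i\theta_1}}=\frac{-\sin(\theta_2-\theta_1)}{|se^{i\theta_2}-e^{i\theta_1}|^2}
\end{equation}
has constant sign in $s$, the direction sweeps monotonically between the two endpoints $2\theta_2-\theta_1$ and $\theta_2$ through an arc of length exactly $\alpha\le\pi/3$, hence without wrapping around the circle.

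The remaining task is to verify, case by case, that this closed arc lies inside $\mc D=[-\tau/2,\tau/2)$. In cases (C1) and (C3) one has $\mc A=\mc A_0$, symmetric around $0$ with half-width $\le\pi/6$, whence $|\theta_2|\le\pi/6$ and $|2\theta_2-\theta_1|\le\pi/2$; since $\tau>\pi$ the whole arc is contained in $(-\tau/2,\tau/2)$. In case (C2) with $\mc A=\mc A_1=[0,\psi/2)$, one has $\theta_2\in[0,\psi/2)$ and $2\theta_2-\theta_1\in(-\psi/2,\psi)$, and the hypothesis $\tau\ge 4\pi/3$ rearranges to $\psi\le 2\pi/3$, i.e.\ $\psi\le\pi-\psi/2=\tau/2$, placing the arc inside $[-\tau/2,\tau/2)$. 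The subcase $\mc A=\mc A_{-1}$ follows by the reflection $\theta\mapsto-\theta$.

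The main obstacle I anticipate is the case (C2) analysis near the boundary $\tau=4\pi/3$, where $2\theta_2-\theta_1$ can approach the boundary value $\tau/2=\psi$ of $\mc D$; the strict half-openness of the interval $\mc A_1=[0,\psi/2)$ is doing real work in guaranteeing the strict inequality $2\theta_2-\theta_1<\psi=\tau/2$ and preventing the arc from falling onto the forbidden boundary of $\mc D$.
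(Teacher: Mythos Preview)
Your argument is correct and takes a genuinely different route from the paper's.  The paper proves the proposition by synthetic plane geometry: it fixes $\vec w_1$ with terminal point $C$, draws the circle of radius $|\vec w_1|$ centered at $C$, and argues by contradiction that if $\vec w_2-\vec w_1\notin\RR_{>0}\scrD$ then $\vec w_2$ would be forced into a region (bounded by certain rays from $C$ and the circle) that lies entirely outside the cone $\RR_{>0}\mc A$.  The argument proceeds by angle-chasing with the auxiliary points labelled $D,E,F,G,H,I$ in the accompanying figures, separately for each case.

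Your approach instead parametrises $\vec w_2-\vec w_1$ by the ratio $s=r_2/r_1$, identifies the exact arc swept out by $\arg(\vec w_2-\vec w_1)$ as $s$ ranges over $[2\cos\alpha,\infty)$ via the derivative computation, and then checks by elementary inequalities that this arc sits inside $\scrD$.  This is cleaner and more uniform across the three cases: the only case-dependent work is the final inequality check on the endpoints $\theta_2$ and $2\theta_2-\theta_1$.  It also makes transparent that only the first inequality $|\vec w_1|\le|\vec w_1-\vec w_2|$ is actually needed for the conclusion (the second is used only to ensure $\vec w_2\ne\vec w_1$, which already follows from the first together with $r_1>0$).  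The paper's approach, by contrast, gives a more geometric picture of \emph{where} a hypothetical $\vec w_2$ with $\vec w_2-\vec w_1\notin\RR_{>0}\scrD$ would have to sit, which is closer in spirit to the companion Proposition~\ref{prop.AsymCone2} where one genuinely needs to count how many such vectors can be stacked.  Your observation that the half-openness of $\mc A_1$ is doing real work at the boundary $\tau=4\pi/3$ is exactly right and matches the role of the strict inequalities in the paper's angle comparison $\psi/2-\theta\le\pi-\psi$.
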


\begin{proof}
	First suppose that we are in case (C2), that $\vec{w}_1$ and $\vec{w}_2$ point in direction $\mc{A}_1,$ and that \eqref{eqn.AsymConeProp1} holds. We will argue using Figure \ref{fig.ConeFig2}.

	\begin{figure}[ht]
		\centering
		\def\svgwidth{0.6\columnwidth}
		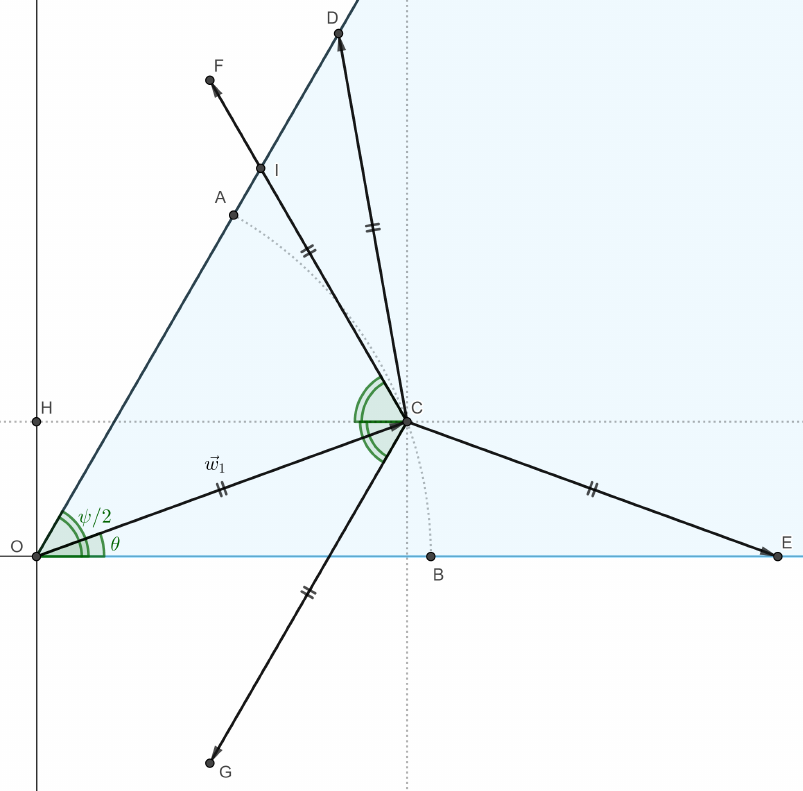
		\caption{Diagram of $\mc{A}_1$ in case (C2)}\label{fig.ConeFig2}
	\end{figure}
	
	 In the figure, angle AOB measures $\psi/2$ and sweeps out $\mc{A}_1$, and the line through H and C is parallel to the $x$-axis. The vector $\vec{w}_1$ is shown, and the angles HCF and HCG also measure $\psi/2$. Also, we have labeled the angle between $\vec{w}_1$ and the positive real axis as $\theta$ (not to be confused with other uses of $\theta$ outside the scope of this proof).
	 
	 By condition \eqref{eqn.AsymConeProp1}, vector $\vec{w}_2$ has to lie outside of both the circle of radius $|\vec{w}_1|$ centered at O, and the circle of radius $|\vec{w}_1|$ centered at C. The circle of radius $|\vec{w}_1|$ centered at C intersects the boundary of $\RR_{>0}\mc{A}_1$ at the three points D, O, and E, and the points F and G have also been chosen so that they lie on this circle. We will show (as indicated in the figure) that F and G lie outside of $\RR_{>0}\mc{A}_1$. This will complete the proof in this sub-case since, if $\vec{w}_2-\vec{w}_1$ were not in $\RR_{>0}\scrD$ then $\vec{w}_2$ would have to lie in the cone swept out by angle FCG, above the ray originating from C and passing through G, and on or below the ray originating from C and passing through F. This, together with the condition that it lies outside of the circle of radius $|\vec{w}_1|$ centered at C, would force it to lie outside of $\RR_{>0}\mc{A}_1$, which is contradictory to our hypotheses.
	 
	 It is clear from the fact the $\psi/2\le\pi/3$ that G lies below the $x$-axis, so it cannot be in $\RR_{>0}\mc{A}_1$ (in fact we only need $\psi/2<\pi/2$ for this to hold). To see why F is not in $\RR_{>0}\mc{A}_1$, first note that angle OCH has measure $\theta$, from which it follows that angle CIO has measure $\pi-\psi$ (to avoid circular reasoning, the point I is defined as the intersection of the line through C and F with the line through O and A). On the other hand, angle CDO has measure $\psi/2-\theta$ and, since $3\psi/2\le\pi$, we have that
	 \begin{equation}
	\psi/2-\theta\le \pi-\psi.
	 \end{equation}
	 This implies that the point F lies on or to the left of the line through O and D, therefore it is not in $\RR_{>0}\mc{A}_1$. The proof for case (C2) when $\vec{w}_1$ and $\vec{w}_2$ point in direction $\mc{A}_{-1}$ follows by symmetry.
	
	Next suppose that we are in case (C1), that $\vec{w}_1$ and $\vec{w}_2$ point in direction $\mc{A}_0,$ and that \eqref{eqn.AsymConeProp1} holds. Here the proof is similar, and we will argue using Figure \ref{fig.ConeFig1}. Once again, let $\theta$ denote the angle between $\vec{w}_1$ and the positive real axis.

\begin{figure}[ht]
	\centering
	\def\svgwidth{0.6\columnwidth}
	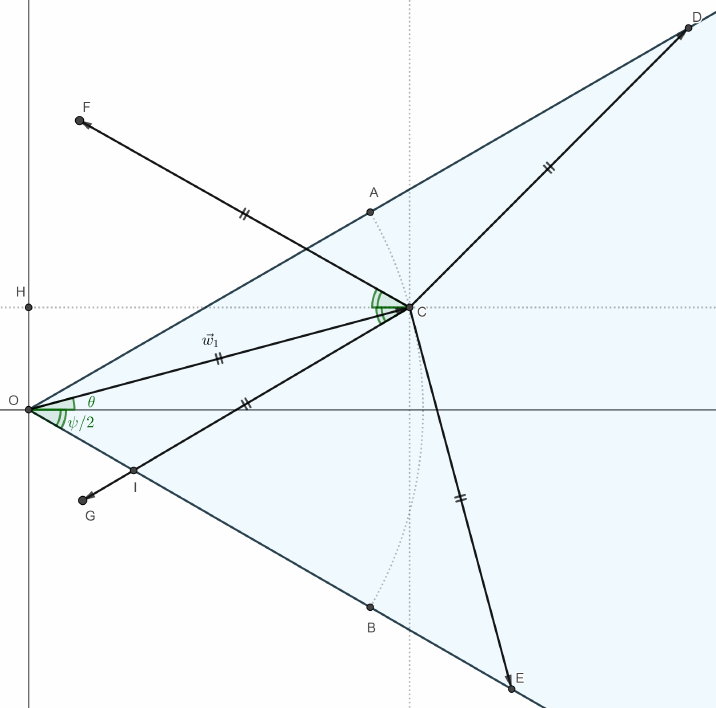
	\caption{Diagram of $\mc{A}_0$ in case (C1)}\label{fig.ConeFig1}
\end{figure}

First assume that $0\le\theta<\psi/2$. Since $\psi/2\le \pi/6$, the argument given above implies again that F lies outside of $\RR_{>0}\mc{A}_0$. Angle HCO has measure $\theta$, therefore angle OCI has measure $\psi/2-\theta$, and it follows that angle CIO has measure $\pi-\psi$. Since triangle OCE is isosceles, angle CEO has measure $\psi/2+\theta$, and since
\begin{equation}
\psi/2+\theta<\pi-\psi,
\end{equation}
this implies that G lies outside of $\RR_{>0}\mc{A}_0$. This argument actually works for all $\theta$ and $\psi$ satisfying $0\le \theta<\psi/2\le\pi/4,$ and a symmetrical argument applies when $-\pi/4\le-\psi/2<\theta<0$. The proof for case (C3), when $\vec{w}_1$ and $\vec{w}_2$ point in direction $\mc{A}_0,$ follows from the same argument.

\end{proof}

\begin{prop}\label{prop.AsymCone2}
	Suppose that we are in case (C3) and that $\vec{w}_1,\vec{w}_2,\ldots,\vec{w}_n$ are any vectors which all point in direction $\mc{A}_{-1}$, or which all point in direction $\mc{A}_1$. If, for each $1\le i\le n-1$, we have that $\vec{w}_{i+1}-\vec{w}_i\not\in\RR_{>0}\scrD$ and that
	\begin{equation}\label{eqn.AsymConeProp2}
	|\vec{w}_i|\le |\vec{w}_i-\vec{w}_{i+1}|<|\vec{w}_{i+1}|,
	\end{equation}
	then we must have that
	\begin{equation}
	n \le 1+\left\lfloor\frac{\sin(\tau/2+\pi/6)}{\sin(\tau-\pi)}\right\rfloor .
	\end{equation}
\end{prop}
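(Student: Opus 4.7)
The plan is to track the signed perpendicular distance $d_i := |\vec{w}_i|\sin(\psi/2-\theta_i)$ from each $\vec{w}_i$ to the ray of angle $\psi/2$ (the upper boundary of $\mc{A}_1$), where $\theta_i\in[\pi/6,\psi/2)$ is the argument of $\vec{w}_i$. The first observation is the identity $\sin(\tau/2+\pi/6)=\sin(\pi-\omega)=\sin\omega$ with $\omega:=\psi/2-\pi/6\in(\pi/6,\pi/3)$ the angular width of $\mc{A}_1$, so the bound to prove is equivalent to $(n-1)\sin\phi<\sin\omega$. Since $d_1\le|\vec{w}_1|\sin\omega$ and $d_n>0$, it suffices to prove the per-step estimate $d_i-d_{i+1}\ge|\vec{w}_1|\sin\phi$.

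Writing $\vec{u}_i := \vec{w}_{i+1}-\vec{w}_i$ with argument $\chi_i$, one computes $d_i-d_{i+1}=|\vec{u}_i|\sin(\chi_i-\psi/2)$. The hypothesis $\vec{u}_i\notin\RR_{>0}\scrD$ gives $\chi_i\in[\pi-\psi/2,\pi+\psi/2)$, equivalently $\chi_i-\psi/2=\phi+\xi_i$ with $\xi_i\in[0,\psi)$. Combined with $|\vec{u}_i|\ge|\vec{w}_i|\ge|\vec{w}_1|$, the goal reduces to showing $\sin(\phi+\xi_i)\ge\sin\phi$, which will follow once we establish that $\xi_i<\pi/3-\phi$, so that $\phi+\xi_i$ remains in $[0,\pi/2)$ where sine is increasing.

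The main obstacle is this upper bound on $\xi_i$. To obtain it, analyze the triangle with vertices $0,\vec{w}_i,\vec{w}_{i+1}$: writing its interior angles $\alpha_i,\beta_i,\gamma_i$ at $0,\vec{w}_i,\vec{w}_{i+1}$ respectively, the hypothesis \eqref{eqn.AsymConeProp2} translates via the law of sines into $\sin\gamma_i\le\sin\alpha_i<\sin\beta_i$. Since $\alpha_i=|\theta_{i+1}-\theta_i|\le\omega<\pi/3$, only the case $\gamma_i\le\alpha_i<\pi/3$ is consistent (the alternative $\gamma_i\ge\pi-\alpha_i$ would force $\beta_i\le0$), and a short check of ranges rules out the $2\pi$-complement interpretation, giving $\gamma_i=\chi_i-\theta_{i+1}$. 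Setting $\mu_j:=\theta_j-\pi/6\in[0,\omega)$, one computes $\gamma_i=\pi/3+\phi/2+\xi_i-\mu_{i+1}$, and the angle sum $\alpha_i+\beta_i+\gamma_i=\pi$ yields $\alpha_i=\mu_{i+1}-\mu_i\ge0$ (so the $\theta_i$ are non-decreasing). Substituting into $\gamma_i\le\alpha_i$ and using $\mu_{i+1}<\omega=\pi/3-\phi/2$ delivers $\xi_i<\pi/3-\phi$ as required.

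Summing $d_i-d_{i+1}\ge|\vec{w}_1|\sin\phi$ over $i=1,\dots,n-1$ and using $d_n>0$, $d_1\le|\vec{w}_1|\sin\omega$ yields $(n-1)\sin\phi<\sin\omega$; since $n-1$ is a non-negative integer, this is the claimed bound $n\le1+\lfloor\sin(\tau/2+\pi/6)/\sin(\tau-\pi)\rfloor$. The case where the vectors lie in direction $\mc{A}_{-1}$ is handled by reflecting the configuration about the $x$-axis.
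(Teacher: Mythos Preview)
Your proof is correct. It follows essentially the same idea as the paper's—both track the perpendicular distance from $\vec w_i$ to the upper boundary ray of $\mc A_1$—but where the paper argues geometrically from Figure~\ref{fig.ConeFig3} and compresses the per-step estimate into ``so it follows that $n-1\le\lfloor|\text{CF}|/|\vec w_1|\rfloor$'', you supply the analytic details. The paper's segment $\text{CF}$ runs from the tip of $\vec w_1$ in direction $\tau/2$ to the ray $OA$, so $|\text{CF}|=d_1/\sin\phi$, and the paper's inequality is exactly your telescoped bound $(n-1)\sin\phi<d_1/|\vec w_1|\le\sin\omega$. Your law-of-sines analysis of the triangle $0,\vec w_i,\vec w_{i+1}$ (giving $\gamma_i\le\alpha_i<\pi/3$, hence $\chi_i<\theta_{i+1}+\pi/3$) is precisely what is needed to justify the per-step bound $d_i-d_{i+1}\ge|\vec w_1|\sin\phi$: a priori one only has $\chi_i-\psi/2\in[\phi,\pi)$, over which $\sin(\chi_i-\psi/2)$ can drop below $\sin\phi$, so this step genuinely requires the extra constraint coming from $|\vec u_i|<|\vec w_{i+1}|$ and $\vec w_{i+1}\in\RR_{>0}\mc A_1$.

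One small remark on your write-up: the bound $\xi_i<\pi/3-\phi$ comes most cleanly from $\gamma_i<\pi/3$ alone, via $\xi_i<\mu_{i+1}-\phi/2<\omega-\phi/2=\pi/3-\phi$. Literally substituting into $\gamma_i\le\alpha_i=\mu_{i+1}-\mu_i$ and using $\mu_{i+1}<\omega$, $\mu_i\ge0$ yields the slightly stronger $\xi_i<\pi/3-3\phi/2$, which of course also suffices (and when this upper bound is nonpositive it simply forces $n=1$).
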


\begin{proof}
	Suppose $\vec{w}_1,\vec{w}_2,\ldots,\vec{w}_n$ point in direction $\mc{A}_{1}$ and consider Figure \ref{fig.ConeFig3}.
	
\begin{figure}[ht]
	\centering
	\def\svgwidth{0.4\columnwidth}
	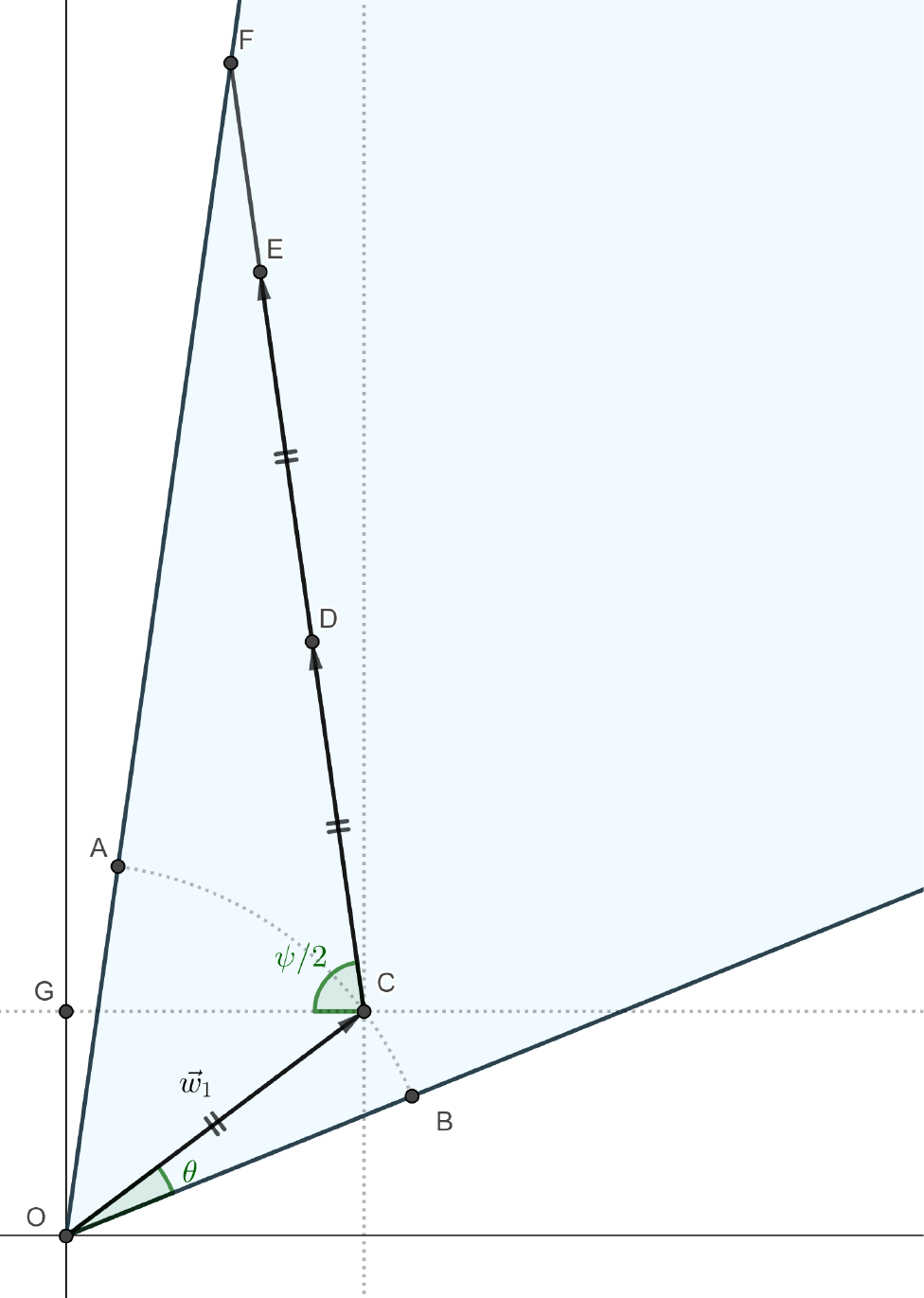
	\caption{Diagram of $\mc{A}_1$ in case (C3)}\label{fig.ConeFig3}
\end{figure}

In the figure, angle AOB measures $\psi/2-\pi/6<\pi/3$ and sweeps out $\mc{A}_1$, the vector $\vec{w}_1$ has initial point O and terminal point C, and the line through G and C is parallel to the $x$-axis. For each $1\le i\le n-1$, we have that $\vec{w}_{i+1}-\vec{w}_i\not\in\RR_{>0}\scrD$ and
\begin{equation}
|\vec{w}_{i+1}-\vec{w}_i|\ge |\vec{w}_1|,
\end{equation}
so it follows that
\begin{equation}\label{eqn.AsymConeNBd}
n-1\le \left\lfloor\frac{|\text{CF}|}{|\vec{w}_1|}\right\rfloor.
\end{equation}
Line segment CF is longest when $\theta=0$, which (using the law of sines) gives the bound 
\begin{equation}
|\text{CF}|\le \frac{|\vec{w}_1|\sin (\psi/2-\pi/6)}{\sin(\pi-\psi)}.
\end{equation}
Substituting $\psi=2\pi-\tau$ gives
\begin{equation}
\frac{|\text{CF}|}{|\vec w_1|}\le 
\frac{\sin (\tau/2+\pi/6)}{\sin(\tau-\pi)}.
\end{equation}
Combining this with \eqref{eqn.AsymConeNBd} completes the proof of the proposition.
\end{proof}

Finally, to obtain the bounds reported in some of the cases of Theorem \ref{thm.2DBoundedGaps}, we will need to gather together a few more facts. The following proposition is an extension of Proposition \ref{prop.ADDI2}, for the special case when one of the vectors in the hypotheses lies in the direction determined by the symmetric set $\mc{S}$.

\begin{prop}\label{prop.SymConeOrder}
Let $1\le i,j\le K$, $i\neq j$, and let $\vec{v}_j\in\RR_{>0}\mc{S}$. If
\begin{equation}-1< u_i\leq -u_j\leq 0 \quad\text{or}\quad 0\leq -u_j\leq u_i<1, \end{equation}
then 	$|\vec{v}_i|<|\vec{v}_j|$ and $i<j$.
\end{prop}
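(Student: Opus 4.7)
The plan is to reduce this to Proposition \ref{prop.ADDI2} by exploiting the symmetry of $\mc{S}$. Since $\scrS$ is symmetric with respect to the rotation $\theta\mapsto\theta+\pi\bmod 2\pi$ and $\vec{v}_j\in\RR_{>0}\mc{S}$, the vector $-\vec{v}_j$ also lies in $\RR_{>0}\mc{S}\subseteq\RR_{>0}\scrD$. Since $\scrQ_\scrD(M)=\ZZ^{d+1}M$ intersected with the appropriate region and the lattice is closed under negation, we obtain $(-u_j,-\vec{v}_j)\in\scrQ_\scrD(M)$.

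First I would handle the case $0\leq -u_j\leq u_i<1$. Applying Proposition \ref{prop.ADDI2} with $(u,\vec v)=(-u_j,-\vec{v}_j)$ (note that $-1<0\le -u_j\leq u_i<1$ fulfills the hypothesis of that proposition), we conclude
\begin{equation}
|\vec{v}_i|\le|-\vec{v}_j|=|\vec{v}_j|.
\end{equation}
For the case $-1<u_i\leq -u_j\leq 0$, the same choice $(u,\vec v)=(-u_j,-\vec{v}_j)$ satisfies the other hypothesis $-1<u_i\leq u\leq 0$ of Proposition \ref{prop.ADDI2}, and again we deduce $|\vec{v}_i|\le|\vec{v}_j|$.

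Finally, strict inequality follows from condition (V1): the values $|\vec{v}_1|<|\vec{v}_2|<\cdots<|\vec{v}_K|$ are pairwise distinct, and since $i\neq j$ we cannot have $|\vec{v}_i|=|\vec{v}_j|$. Hence $|\vec{v}_i|<|\vec{v}_j|$, and invoking (V1) once more yields $i<j$. I do not anticipate a substantial obstacle here; the only subtlety is to verify that $-\vec{v}_j\in\RR_{>0}\scrD$, which is guaranteed precisely because $\vec{v}_j$ lies in the \emph{symmetric} part $\scrS$ of the direction set $\scrD$ rather than in one of the asymmetric subsets $\mc{A}_{-1},\mc{A}_0,\mc{A}_1$.
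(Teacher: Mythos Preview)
Your proof is correct and follows essentially the same approach as the paper: use the symmetry of $\mc{S}$ to obtain $(-u_j,-\vec{v}_j)\in\scrQ_\scrD(M)$, apply Proposition~\ref{prop.ADDI2} to get $|\vec{v}_i|\le|\vec{v}_j|$, and then invoke (V1) with $i\neq j$ for the strict inequality and the ordering $i<j$. The paper's own proof is just a two-sentence compression of exactly this argument.
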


\begin{proof}
The vector $-\vec{v}_j$ is in $\mc{S}$, and hence $(-u_j,-\vec{v}_j)\in\scrQ_\scrD(M)$. Proposition \ref{prop.ADDI2} then yields the statement.
\end{proof}

\begin{prop}\label{prop.SymConeSmallAngle}
Let $1\le i,j\le K$, and let $\vec{v}_j\in\RR_{>0}\mc{S}$. Assume the angle between $\vec{v}_i$ and $-\vec{v}_j$ is less than $\pi/3$. If
	\begin{equation}-1< u_i \leq -u_j \leq -1/2 \quad\text{or}\quad 1/2\leq -u_j \leq u_i< 1,\end{equation}
	then $j=K$.
\end{prop}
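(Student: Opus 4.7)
The plan is to model the proof on the structure of Proposition \ref{prop.LargeTSmallAngle2}, but to exploit crucially the extra symmetry afforded by $\vec{v}_j\in\RR_{>0}\mc{S}$, which provides a second lattice point $(-u_j,-\vec{v}_j)\in\scrQ_\scrD(M)$. I treat the second case $1/2\le -u_j\le u_i<1$; the first follows by the same argument applied after reflection. Note that $u_i\ne -u_j$ would force $i=j$, which together with $\vec{v}_j\in\RR_{>0}\scrS$ would demand the angle between $\vec v_j$ and $-\vec v_j$ (namely $\pi$) to be less than $\pi/3$, so in fact $i\neq j$ throughout.

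First I would establish an ordering: applying Proposition \ref{prop.ADDI2} to the pair $(-u_j,-\vec{v}_j)\in\scrQ_\scrD(M)$ with $0\le -u_j\le u_i<1$ yields $|\vec v_i|\le |\vec v_j|$, and since $i\ne j$ the inequality is strict by (V1). Next, the angle hypothesis together with Proposition \ref{prop.LengthReduc} applied to $\vec v_i$ and $-\vec v_j$ gives
\begin{equation}
|\vec v_i+\vec v_j| \;=\; |\vec v_i-(-\vec v_j)| \;<\; \max\{|\vec v_i|,|\vec v_j|\} \;=\; |\vec v_j|.
\end{equation}
In particular $\vec v_i+\vec v_j\ne \vec 0$ (otherwise the norms would coincide), and because $\tau\ge \pi$ at least one of $\vec v_i+\vec v_j$ or $-(\vec v_i+\vec v_j)$ lies in $\RR_{>0}\scrD$. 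Either way, the lattice point $\pm(u_i+u_j,\vec v_i+\vec v_j)\in\ZZ^{d+1}M$ belongs to $\scrQ_\scrD(M)$ and has $u$-coordinate in $[0,1/2)$ or in $(-1/2,0]$ respectively.

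The final step is to cover the parameter interval $(0,1)$ by pairs that bound $F_\scrD(M,t)$ by $|\vec v_j|$. The pair $(u_j,\vec v_j)$ gives $F_\scrD(M,t)\le|\vec v_j|$ on $(-u_j,1)$, while the symmetric pair $(-u_j,-\vec v_j)$ gives the same bound on $(0,1+u_j)$. This leaves the gap $[\,1+u_j,\,-u_j\,]\subset(0,1)$, which one checks is contained strictly within $(u_i+u_j,1)$ in the sub-case $-(\vec v_i+\vec v_j)\in\RR_{>0}\scrD$ (using $u_i<1$ and $u_j>-1$) and strictly within $(0,1-(u_i+u_j))$ in the sub-case $\vec v_i+\vec v_j\in\RR_{>0}\scrD$. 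In both sub-cases the additional lattice point delivers $F_\scrD(M,t)\le |\vec v_i+\vec v_j|<|\vec v_j|$ throughout the gap.

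Combining, $F_\scrD(M,t)\le|\vec v_j|$ for every $t\in(0,1)$. By (V3), $|\vec v_K|$ is attained as a value of $F_\scrD(M,\cdot)$, hence $|\vec v_K|\le|\vec v_j|$, while (V1) gives the reverse inequality $|\vec v_j|\le |\vec v_K|$; thus $j=K$. The main obstacle I anticipate is the arithmetic of the interval endpoints: one has to verify the gap inclusions carefully under both sub-cases, but these reduce to the elementary inequalities $-1<u_j$ and $u_i<1$ that are part of the hypothesis.
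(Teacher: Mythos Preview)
Your proof is correct and follows essentially the same strategy as the paper: exploit the symmetry of $\scrS$ to obtain the auxiliary lattice point $(-u_j,-\vec v_j)\in\scrQ_\scrD(M)$, apply Proposition~\ref{prop.LengthReduc} to get $|\vec v_i+\vec v_j|<|\vec v_j|$, and cover $(0,1)$ by intervals on which $F_\scrD(M,\cdot)\le|\vec v_j|$. The one substantive difference is that the paper \emph{eliminates} the sub-case $\vec v_i+\vec v_j\in\RR_{>0}\scrD$ by contradiction (invoking Proposition~\ref{prop.ADDI2}), and then only covers $(0,1)$ in the remaining sub-case; you instead treat both sub-cases uniformly via a three-interval covering using $(u_j,\vec v_j)$, $(-u_j,-\vec v_j)$, and $\pm(u_i+u_j,\vec v_i+\vec v_j)$. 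Your version is slightly more symmetric and arguably cleaner, and the interval inclusions you state do reduce exactly to $u_i<1$ and $u_j>-1$ as claimed.

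One small wrinkle: your opening sentence ``$u_i\ne -u_j$ would force $i=j$'' is garbled and does not make sense as written. The correct (and simpler) observation, which the paper uses, is that the angle hypothesis alone forces $i\ne j$: if $i=j$ then the angle between $\vec v_j$ and $-\vec v_j$ would be $\pi$, not less than $\pi/3$. This is cosmetic and does not affect the rest of your argument.
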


\begin{proof}
The proof is similar to that of Proposition \ref{prop.LargeTSmallAngle2}, which would directly apply if we had assumed $\vec{v}_i\in\RR_{>0}\mc{S}$ rather than $\vec{v}_j\in\RR_{>0}\mc{S}$. 
	
	The assumption on the angle implies $i\neq j$. We consider the case $1/2\leq -u_j\leq u_i<1$; the proof for the alternative follows from the same argument by symmetry. By the assumption on the angle between the vectors $\vec{v}_i$ and $-\vec{v}_j$, we have by Proposition \ref{prop.LengthReduc} that
	$|\vec{v}_i+\vec{v}_j|<|\vec{v}_j|$. Since $\vec{v}_i\neq \vec{v}_j$  and $\tau\geq\pi$, we have that at least one of $\pm(\vec{v}_i+\vec{v}_j)$ is in $\RR_{>0}\scrD$. Suppose $\vec{v}_i+\vec{v}_j\in\RR_{>0}\scrD$. 
	
	Then, since $-(u_j,\vec{v}_j)\in\scrQ_\scrD(M)$ and
	$$0\leq u_i+u_j < 1/2\leq -u_j<1, $$ 
	we have $(u_i+u_j,\vec v_i+\vec v_j)\in\scrQ_\scrD(M)$ and by Proposition \ref{prop.ADDI2} that $|\vec{v}_i+\vec{v}_j|\geq |\vec{v}_j|$, a contradiction. Therefore $\vec{v}_i+\vec{v}_j\notin\RR_{>0}\scrD$ and we must have $-(\vec{v}_i+\vec{v}_j)\in\RR_{>0}\scrD$. This means $-(u_i+u_j,\vec v_i+\vec v_j)\in\scrQ_\scrD(M)$ and 
	\begin{equation}
	0 \leq  u_i+u_j<1+u_j .
	\end{equation}
	It follows from this that, for all $t$ with $u_i+u_j< t<1$, we have $F_\scrD(M,t)\leq |\vec{v_i}+\vec{v}_j|<  |\vec{v}_j|$ and for $0< t<1+u_j$ (which in particular holds for all $t$ with $0 < t \leq u_i+u_j$), we have $F_\scrD(M,t)\leq |\vec{v}_j|$. Therefore $j=K$.
\end{proof}

The previous proposition allows us to deduce the following simple and useful result. Recall that $\phi=\tau-\pi$, so that $\tau=\psi+2\phi$.
\begin{prop}\label{prop.SymConeUB}
Let $S_2$ denote the number of integers $i$ with $1\le i\le K,~\vec{v}_i\in\RR_{>0}\mc{S}$, and $u_i\in (-1,-1/2]\cup [1/2,1)$. Then
\begin{equation}\label{eqn.SymVecsUB}
S_2\le 
\begin{cases}
1+2\left\lceil\frac{\phi}{\pi/3}\right\rceil & \text{if $\vec{v}_K\in\RR_{>0}\mc{S}$ and $u_K\in (-1,-1/2]\cup [1/2,1)$,}\\[10pt]
2\left\lceil\frac{\phi}{\pi/3}\right\rceil & \text{otherwise.}\\
\end{cases}
\end{equation}
\end{prop}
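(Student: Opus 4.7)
The plan is to carry out a covering argument on $\mc{S}$ after a sign-symmetrization that allows Propositions \ref{prop.LargeTSmallAngle} and \ref{prop.SymConeSmallAngle} to be applied uniformly. First, for each index $i$ contributing to $S_2$ I would set $t_i=|u_i|\in[1/2,1)$ and $\vec w_i=\sgn(u_i)\vec v_i$; since $\mc{S}$ is invariant under $\vec v\mapsto-\vec v$, the reflected vector $\vec w_i$ still has direction in $\mc{S}$.

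Next, I would cover $\mc{S}$ (a disjoint union of two arcs each of arclength $\phi=\tau-\pi$) by $2m$ half-open sub-arcs of length at most $\pi/3$, where $m=\lceil\phi/(\pi/3)\rceil$, so that any two vectors with directions in a common sub-arc subtend an angle strictly less than $\pi/3$.

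The key step will be to show that each sub-arc contains $\vec w_i$ for at most one index $i\neq K$ contributing to $S_2$. Given two such indices $i\neq j$ with $\vec w_i,\vec w_j$ in the same sub-arc, I would assume $t_i\geq t_j$ without loss of generality and split into cases according to the signs of $u_i,u_j$. In the same-sign case, the angles between $\vec v_i,\vec v_j$ and between $\vec w_i,\vec w_j$ coincide, so the hypothesis of Proposition \ref{prop.LargeTSmallAngle} is met (with $t_i>t_j$ strict, thanks to Proposition \ref{prop.NoSameTVal}), forcing $j=K$. In the opposite-sign case, the angle between $\vec v_i$ and $-\vec v_j$ coincides with that between $\vec w_i$ and $\vec w_j$, and the ordering $t_i\geq t_j$ translates into one of the two hypotheses $-1<u_i\leq -u_j\leq -1/2$ or $1/2\leq -u_j\leq u_i<1$ of Proposition \ref{prop.SymConeSmallAngle} (according to whether $u_i<0$ or $u_i>0$), so again $j=K$.

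Once this claim is in place, the bound follows by pigeonholing: there are at most $2m$ indices $i\in S_2$ with $i\neq K$, yielding $|S_2|\leq 2m$ in general and $|S_2|\leq 2m+1$ precisely when $K$ itself contributes to $S_2$, i.e.\ when $\vec v_K\in\RR_{>0}\mc{S}$ and $u_K\in (-1,-1/2]\cup [1/2,1)$. This is exactly \eqref{eqn.SymVecsUB}. I expect the main obstacle to be the sign bookkeeping in the key claim: the two propositions have asymmetric hypotheses, and one must check that a single WLOG choice $t_i\geq t_j$ aligns with the appropriate hypothesis in each of the four sign configurations of $(u_i,u_j)$, including the borderline case $t_i=t_j$ (only possible under opposite signs, where Proposition \ref{prop.SymConeSmallAngle} fortunately allows equality).
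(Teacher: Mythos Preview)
Your proposal is correct and follows essentially the same approach as the paper: cover each of the two arcs of $\mc S$ by $\lceil\phi/(\pi/3)\rceil$ sub-arcs and invoke Propositions~\ref{prop.LargeTSmallAngle} and~\ref{prop.SymConeSmallAngle} to show that among the indices $i\neq K$ contributing to $S_2$, no two can land in the same sub-arc. The paper's proof is a two-sentence sketch of exactly this; your sign-symmetrization $\vec w_i=\sgn(u_i)\vec v_i$ is a clean device for making the case split explicit, and your verification that the equality case $t_i=t_j$ (possible only under opposite signs) is absorbed by the non-strict hypothesis of Proposition~\ref{prop.SymConeSmallAngle} fills in a detail the paper leaves to the reader.
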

\begin{proof}
The quantity $\phi$ is the angle swept out by the part of $\mc{S}$ which lies above the $x$-axis. The maximum number of vectors which can be placed in this region, so that the angles between any two vectors is at least $\pi/3$, is $\lceil\phi/(\pi/3)\rceil$. The upper bound in \eqref{eqn.SymVecsUB} therefore follows from combining the results of Propositions \ref{prop.LargeTSmallAngle} and \ref{prop.SymConeSmallAngle}.
\end{proof}

\section{Explicit upper bounds in dimension $d=2$, part 1}\label{sec.2DBddPf}

Throughout this section we take $d=2$. In view of \eqref{keyineq}, the following statement directly implies all cases of Theorem \ref{thm.2DBoundedGaps}, except the case when $\scrD=\SS_1^1$ (which is handled in the next section).

\begin{thm}\label{thm.2DBoundedGapsFULL}
Let $d=2$, and assume $\scrD\subseteq\SS_1^1$ is a half-open interval of arclength $\tau>\pi$.
Then for any $M\in \SL(3,\RR)$ we have that
\begin{equation}\label{eqn.gNUpperBdFULL}
\scrG_\scrD(M)\le\begin{cases}
9&\text{if}\quad5\pi/3<\tau<2\pi,\\
8&\text{if}\quad\tau=5\pi/3,\\
9&\text{if}\quad4\pi/3<\tau<5\pi/3,\\
8&\text{if}\quad\tau=4\pi/3,\\
12+2\left\lfloor\frac{\sin (\tau/2+\pi/6)}{\sin(\tau-\pi)}\right\rfloor&\text{if}\quad\pi<\tau<4\pi/3.
\end{cases}
\end{equation}
\end{thm}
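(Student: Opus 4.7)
My plan is to classify the representative indices $1\le i\le K=\scrG_\scrD(M)$ (with $(u_i,\vec v_i)$ chosen to satisfy (V1)--(V3)) into groups, bound each group using the propositions of Section \ref{sec.2Dprep}, and sum, adding $1$ to account for the unique index $K$. The classification is twofold: (a) by whether $\vec v_i$ lies in $\RR_{>0}\mc S$ (symmetric direction) or in $\RR_{>0}\mc A_k$ for one of the asymmetric subsets $\mc A_k$, and (b) by whether $|u_i|<1/2$ (\emph{small} $u$) or $|u_i|\ge 1/2$ (\emph{large} $u$). The rest of the argument is then a case-by-case count over (C1)--(C3).

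For the symmetric class, Proposition \ref{prop.SymSmallT} forces any small-$u$ symmetric representative to equal $K$, while Proposition \ref{prop.SymConeUB} bounds the large-$u$ symmetric count by $2\lceil \phi/(\pi/3)\rceil$ (plus $1$ if $K$ lies there), with $\phi=\tau-\pi$. For each asymmetric cone $\mc A_k$ of arclength at most $\pi/3$---every cone in (C1) and (C2), and $\mc A_0$ in (C3)---I would argue that two non-$K$ small-$u$ representatives in $\mc A_k$ with $u$'s on the same side of $0$ cannot coexist: they satisfy the hypothesis of Proposition \ref{prop.SmallTSmallAngle}, whose conclusion $\vec v_j-\vec v_i\notin\RR_{>0}\scrD$ together with $|\vec v_i|\le|\vec v_i-\vec v_j|<|\vec v_j|$ directly contradicts Proposition \ref{prop.AsymCone1}. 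Combined with Proposition \ref{prop.v_KSmallT}, which restricts non-$K$ small-$u$ indices to a single side of $0$, this leaves at most one non-$K$ small-$u$ index per such cone. For the narrower cones $\mc A_{\pm 1}$ in case (C3), Proposition \ref{prop.AsymCone1} is unavailable, but iterating Proposition \ref{prop.SmallTSmallAngle} along a chain of small-$u$ indices produces the hypothesis of Proposition \ref{prop.AsymCone2}, which caps the chain length by $1+\lfloor \sin(\tau/2+\pi/6)/\sin(\tau-\pi)\rfloor$. Finally, Proposition \ref{prop.LargeTSmallAngle} controls the large-$u$ indices in each asymmetric cone: since the cone has arclength $\le\pi/3$, any two such indices with $u$'s on the same side satisfy the angle hypothesis, forcing the one with smaller $|u|$ to be $K$, and leaving at most one non-$K$ per side for a total of at most two per cone.

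Summing these class-wise contributions over (C1), (C2), (C3) and adding $1$ for $K$ yields the claimed bounds. The drop from $9$ to $8$ at $\tau=5\pi/3$ and $\tau=4\pi/3$ is traced directly to the discontinuity of $\lceil\phi/(\pi/3)\rceil$ at these thresholds, while the appearance of $\lfloor\sin(\tau/2+\pi/6)/\sin(\tau-\pi)\rfloor$ in the last case is the output of Proposition \ref{prop.AsymCone2}. The main obstacle I expect is the bookkeeping for $K$: each class bound is sharper by $1$ when $K$ does not lie in that class, so a careful case split on the location of $K$ is needed, and in particular one must invoke the cross-class constraint of Proposition \ref{prop.SymConeSmallAngle} to rule out certain symmetric large-$u$ indices whenever an asymmetric large-$u$ index occupies a corresponding position on the opposite side. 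This cross-class cancellation is what ultimately reduces the naive sum to the stated bound rather than a looser one.
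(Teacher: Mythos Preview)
Your framework matches the paper's exactly: the same four-way split into $S_1,A_1,S_2,A_2$ (symmetric/asymmetric direction, small/large $|u|$), the same use of Propositions \ref{prop.SymSmallT}, \ref{prop.v_KSmallT}, \ref{prop.SmallTSmallAngle}--\ref{prop.AsymCone2}, \ref{prop.SymConeUB}, and the same recognition that the location of $K$ drives the bookkeeping. However, your sketch understates how much work remains after the class-wise bounds, and it misidentifies the mechanism that closes the gap.

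Concretely, in case (C1) with $5\pi/3<\tau<2\pi$ your class-wise count gives at most $0+6+1+2$ non-$K$ indices plus $1$ for $K$, i.e.\ $10$, not $9$; similar overshoots occur throughout (C1)--(C2). The paper does \emph{not} recover the missing unit via Proposition~\ref{prop.SymConeSmallAngle} applied across opposite $u$-sides, as you propose. Instead it argues on the \emph{same} $u$-side: if enough symmetric large-$u$ vectors with, say, $u\in(-1,-1/2]$ are present, pigeonhole puts two of them on the same side of the $x$-axis; adjoining any asymmetric large-$u$ vector with $u\in(-1,-1/2]$ then places three vectors in an arc whose length is controlled by $\phi+\psi=\pi$, and one deduces a pair with angle $<\pi/3$, so Proposition~\ref{prop.LargeTSmallAngle} forces one of them to be $K$. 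This interaction bounds $S_2+A_2$ jointly rather than separately, and the paper carries it out through a nested case split on the exact values of $S_1,A_1,S_2$ (and on whether \eqref{eqn.u_KSmall} holds), differently in each of the five $\tau$-ranges. Your explanation that the drop to $8$ at $\tau=5\pi/3,4\pi/3$ is ``directly'' the jump in $\lceil\phi/(\pi/3)\rceil$ is also too quick: that jump changes the $S_2$ bound by $2$, yet the final bound drops by only $1$, precisely because the joint $S_2+A_2$ analysis absorbs part of the gain. So the plan is right, but the substantive content of the proof is the same-side pigeonhole argument coupling $S_2$ and $A_2$, which you have not supplied.
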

The remainder of this section is dedicated to the proof of this theorem. For the proof, we will apply the propositions from the previous section to each of the five cases described in \eqref{eqn.gNUpperBdFULL}. To summarize the main points of our arguments:\vspace*{1bp}
\begin{enumerate}[(i)]
	\item Let $S_1$ denote the number of $1\le i\le K$ with $u_i\in (-1/2,1/2)$ and $\vec{v}_i\in\RR_{>0}\mc{S}$. Proposition \ref{prop.SymSmallT} guarantees that $S_1\le 1$, and that if $S_1=1$ then the corresponding value of $i$ equals $K$.\vspace*{10bp}
	\item Let $S_2$ denote the number of $1\le i\le K$ with $u_i\in (-1,-1/2]\cup [1/2,1)$ and $\vec{v}_i\in\RR_{>0}\mc{S}$. Proposition \ref{prop.SymConeUB} gives an upper bound for $S_2$.\vspace*{10bp}
	\item Let $A_1$ denote the number of $1\le i\le K$ with $u_i\in (-1/2,1/2)$ and with $\vec{v}_i$ in direction of any asymmetric subset. Propositions \ref{prop.v_KSmallT} and \ref{prop.SmallTSmallAngle}-\ref{prop.AsymCone2} give upper bounds for $A_1+S_1$.\vspace*{10bp}
	\item Let $A_2$ denote the number of $1\le i\le K$ with $u_i\in (-1,-1/2]\cup [1/2,1)$ and with $\vec{v}_i$ in direction of any asymmetric subset. Propositions \ref{prop.NoSameTVal}-\ref{prop.LargeTSmallAngle} give upper bounds for $A_2+S_2$.\vspace*{1bp}	
\end{enumerate}
In all cases, we have that $\scrG_\scrD(M)=K=S_1+A_1+S_2+A_2.$ In what follows, recall that $\psi=2\pi-\tau$ and $\phi=\tau-\pi$.\vspace*{10bp}

\noindent \textbf{Case (C1)}, $\tau=2\pi$: This is actually not one of the cases considered in Theorem \ref{thm.2DBoundedGapsFULL}, but we include it as a demonstration of the proof technique, and to provide an easy argument that $\scrG_\scrD(M)\le 6$. This bound will be improved in the next section, by a slightly more complicated argument, to show that $\scrG_\scrD(M)\le 5$.

In this case $\psi=0$ and $\phi=\pi$, so it is clear that $A_1=A_2=0$. We claim that $S_2\le 5$. Suppose by way of contradiction that $S_2\ge 6$. Since $\tau=2\pi$, we may assume without loss of generality that the vectors $(u_i,\vec{v}_i)$ have been chosen so that $u_i\in [0,1)$ for each $i$. Then we can find $1\le i<j\le K$ with $1/2\le u_i,u_j<1$, and for which the angle between $\vec{v}_i$ and $\vec{v}_j$ is less than or equal to $\pi/3$. By the second part of Proposition \ref{prop.LengthReduc}, we then have that 
\begin{equation}
|\vec{v}_i-\vec{v}_j|<|\vec{v}_j|.
\end{equation}
However, since $|u_i-u_j|<1/2$ and $\vec{v}_i-\vec{v}_j\in\RR_{>0}\mc{S}$, this implies that
\begin{equation}
F_\mc{D}(M,t)\le|\vec{v}_i-\vec{v}_j|<|\vec{v}_j|,
\end{equation}
for all $t\in(0,1)$, which contradicts assumption (V3). Therefore $S_2\le 5$. Since $S_1\le 1,$ this gives the bound $\scrG_\scrD(M)\le 6$.
\vspace*{10bp}

\noindent \textbf{Case (C1)}, $5\pi/3<\tau<2\pi$: In this case $0<\psi<\pi/3$ and $2\pi/3<\phi<\pi$. There is only one asymmetric subset, $\mc{A}_{0}= [-\psi/2,\psi/2).$\vspace*{5bp}

\begin{enumerate}[A.]
	\item Assume $S_1=1$. Then by Proposition \ref{prop.SymSmallT} we must have that
	\begin{equation}\label{eqn.u_KSmall}
	u_K\in (-1/2,1/2).
	\end{equation}
	Proposition \ref{prop.v_KSmallT} implies that we cannot have $i$ and $j$ with $1\le i,j<K,~u_i\in (-1/2,0],~$ and $u_j\in[0,1/2).$ Therefore, by Propositions \ref{prop.SmallTSmallAngle} and \ref{prop.AsymCone1}, we have that $A_1\le 1$. \vspace*{5bp}
	
	Since $\lceil\frac{\phi}{\pi/3}\rceil= 3$ and \eqref{eqn.u_KSmall} holds, Proposition \ref{prop.SymConeUB} implies that $S_2\le 6.$ We claim that, in this sub-case, $S_2+A_2\le 7.$ It is clear from Proposition \ref{prop.LargeTSmallAngle} that, since \eqref{eqn.u_KSmall} holds, we must have that $A_2\le 2$, and that if $A_2=2$ then one element has its first component in $(-1,-1/2]$ and the other in $[1/2,1)$. In order to establish our claim we only need to consider what happens when $S_2\ge 5$. If $S_2\ge 5$ then there must be at least three values of $1\le i< K$ with $u_i\in(-1,-1/2]$ and $\vec{v}_i\in\RR_{>0}\mc{S}$, or at least three values with $u_i\in [1/2,1)$ and $\vec{v}_i\in\RR_{>0}\mc{S}$ (or possibly both, if $S_2=6$). The argument in both cases is the same, so suppose without loss of generality that the former condition holds. Then, there are at least $2$ values $1\le i<j< K$ with $u_i,u_j\in(-1,-1/2],~\vec{v}_i,\vec{v}_j\in\RR_{>0}\mc{S},$ and with $\vec{v}_i$ and $\vec{v}_j$ either both above or both below the $x$-axis. If there were also a value of $1\le k\le K$ with $u_k\in (-1,-1/2]$ and $\vec{v}_k\in\mc{A}_0$ then, since $\phi+\psi=\pi$, at least one pair of the vectors $\vec{v}_i,\vec{v}_j,$ and $\vec{v}_k$ would be separated by an angle of less than $\pi/3$. This, together with Proposition \ref{prop.LargeTSmallAngle}, would contradict \eqref{eqn.u_KSmall}, therefore such a $k$ cannot exist. This means $A_2\le 1,$ and $S_2+A_2\le 6+1=7$.\vspace*{5bp}
	
	\noindent Combining the above bounds gives that $\scrG_\scrD(M)\le 1+1+6+1=9$.\vspace*{5bp}
	
	\item Assume  $S_1=0$. Then by Propositions \ref{prop.SmallTSmallAngle} and \ref{prop.AsymCone1}, we have that $A_1\le 2$. If $A_1=2$ then by Proposition \ref{prop.v_KSmallT} we again have that \eqref{eqn.u_KSmall} holds. The same argument as in the previous Case A yields  $S_2+A_2\le 6+1=7$, and therefore $\scrG_\scrD(M)\le 0+2+6+1 = 9$.\vspace*{5bp}
	
	Assume now $A_1\le 1$. Since $\lceil\frac{\phi}{\pi/3}\rceil= 3$ we have by Proposition \ref{prop.SymConeUB} that $S_2\le 7$, and the case $S_2=7$ can only arise if $\vec{v}_K\in\RR_{>0}\mc{S}$ and $u_K\in (-1,-1/2]\cup [1/2,1)$. \vspace*{5bp}
	\begin{enumerate}
	\item
	Assume first $S_2=7$. Then there exist vectors $\vec{v}_{i_1},\ldots ,\vec{v}_{i_5}\in\RR_{>0}\mc{S}$ with indices 
	\begin{equation}
	1\le i_1<i_2<\cdots<i_5<K
	\end{equation}
	chosen so that the vectors have the smallest possible lengths. There must be at least $2$ of these vectors, say $\vec v_i$ and $\vec v_j$, which both lie above or below the $x$-axis, and with corresponding $u_i$ values either both in $(-1,-1/2]$ or both in $[1/2,1)$. If there were also a value of $1\le k\le K$ with $u_k\in (-1,-1/2]$ and $\vec{v}_k\in\mc{A}_0$ then, since $\phi+\psi=\pi$, at least one pair of the vectors $\vec{v}_i,\vec{v}_j,$ and $\vec{v}_k$ would be separated by an angle of less than $\pi/3$. Proposition \ref{prop.LargeTSmallAngle} implies that one of the indices has to be equal to $K$. Since $i,j<K$, we have $k=K$, contradicting the fact that $\vec{v}_K\in\RR_{>0}\mc{S}$. We have therefore $A_2\le 1$, so $\scrG_\scrD(M)\le 0+1+7+1=9$. 
	\item Assume now $S_2=5$ or $6$. In this case we claim that $A_2\le 2$. To see why this is true, suppose that $A_2\ge 3$. 
	Then, by Proposition \ref{prop.LargeTSmallAngle}, we have $1\le i<j<K$ such that $\vec{v}_i,\vec{v}_j,\vec{v}_K\in\mc{A}_0$ and $u_i,u_j,u_K\notin(-1/2,1/2)$ with $u_i$ and $u_j$ having opposite sign. Since $S_2\geq 5$, there are at least five values of $\ell$ with $1\le \ell<K,~\vec{v}_\ell\in\RR_{>0}\mc{S}$, and $u_\ell\notin(-1/2,1/2)$. By the same arguments as above, at least two of these would have to lie either above or below the $x$-axis, and have $u_\ell$ values both in $(-1,-1/2]$ or both in $[1/2,1)$. This, by Proposition \ref{prop.LargeTSmallAngle}, would then imply that one of $i$ or $j,$ or one of these $\ell$ values, equals $K$, which is a contradiction. Therefore $A_2\le 2$, $S_2\leq 6$ and so $\scrG_\scrD(M)\le 0+1+6+2=9$.  
	\item
	Finally, assume $S_2\le 4$. By Proposition \ref{prop.LargeTSmallAngle} we have that $A_2\le 3$, so  $\scrG_\scrD(M)\le 0+1+4+3=8$.
	\end{enumerate}
\end{enumerate}
\vspace*{10bp}

\noindent \textbf{Case (C1)}, $\tau=5\pi/3$: In this case $\psi=\pi/3$ and $\phi=2\pi/3$, and there is only one asymmetric subset, $\mc{A}_{0}= [-\pi/6,\pi/6)$. In this case the argument is similar to the previous case. The key improvement is in the application of Proposition \ref{prop.SymConeUB}, since now $\lceil\frac{\phi}{\pi/3}\rceil= 2$.\vspace*{5bp}

\begin{enumerate}[A.]
	\item If $S_1=1$ then \eqref{eqn.u_KSmall} holds and we conclude as in the previous Case A that $A_1\le 1,~$ $S_2\le 4$. Proposition \ref{prop.LargeTSmallAngle} and \eqref{eqn.u_KSmall} imply that $A_2\le 2$, and so $\scrG_\scrD(M)\le 1+1+4+2=8$.\vspace*{5bp}
	\item If $S_1=0$ then $A_1\le 2$ as in Case B above. If $A_1=2$ then \eqref{eqn.u_KSmall} holds, so $S_2\le 4$ by Proposition \ref{prop.SymConeUB} and we conclude $\scrG_\scrD(M)\le 0+2+4+2=8$.\vspace*{5bp}
	
	\noindent Now assume $A_1\le 1$. Proposition \ref{prop.SymConeUB} gives $S_2\le 5.$ If $S_2=5$ then, by the argument in Case B (b) above, $A_2\le 2$. We conclude $\scrG_\scrD(M)\le 0+1+5+2=8$. If $S_2\le 4$ then, by the same argument as in Case B (c) above, $A_2\le 3$ and $\scrG_\scrD(M)\le 0+1+4+3=8$.
\end{enumerate}\vspace*{10bp}

\noindent \textbf{Case (C2)}, $4\pi/3<\tau<5\pi/3$: In this case $\pi/3<\psi<2\pi/3$ and $\pi/3<\phi<2\pi/3$. There are two asymmetric subsets, $\mc{A}_{-1}=[-\psi/2,0)$ and $\mc{A}_{1}=[0,\psi/2).$ The argument here is similar to that given above, except that now when we use Propositions \ref{prop.SmallTSmallAngle} and \ref{prop.AsymCone1}, they must be applied to both asymmetric subsets.\vspace*{5bp}

\begin{enumerate}[A.]
	\item  Assume $S_1=1$. Then \eqref{eqn.u_KSmall} holds, and Propositions \ref{prop.SmallTSmallAngle} and \ref{prop.AsymCone1} imply that $A_1\le 2$ (the same argument as in Case A above, now applied to each of the two asymmetric cones). Furthermore $\lceil\frac{\phi}{\pi/3}\rceil= 2$ and thus Proposition \ref{prop.SymConeUB} yields $S_2\le 4$.
	\begin{enumerate}
	\item
	We claim that, if $S_2\ge 3,$ then $A_2\le 2$. Suppose by way of contradiction that $S_2\ge 3$, that $1\le i,j,k<K,~u_i,u_j,u_k\notin (-1/2,1/2),$ and that $\vec{v}_i,\vec{v}_j,\vec{v}_k$ are distinct vectors in direction $\mc{A}_{-1}\cup\mc{A}_1$. It could not be the case that either $u_i,u_j,u_k\in (-1,-1/2]$ or that $u_i,u_j,u_k\in [1/2,1)$, otherwise we would have to have by Proposition \ref{prop.LargeTSmallAngle} that $i,j,$ or $k$ equals $K$. Therefore two of the numbers $u_i,u_j,$ and $u_k$ lie in one of the intervals $(-1,-1/2]$ or $[1/2,1)$, and the other number lies in the other interval. Without loss of generality (the argument is the same in all cases) let us suppose that $u_i,u_j\in (-1,-1/2]$ and that $u_k\in [1/2,1)$. Then there cannot be any values of $1\le \ell<K$ with $u_\ell\in (-1,-1/2]$ and $\vec{v}_\ell\in\RR_{>0}\mc{S}$. If there were then, again since $\psi+\phi=\pi$, at least one pair of the three vectors $\vec{v}_i,\vec{v}_j,$ and $\vec{v}_\ell$ would be separated by an angle of less than $\pi/3$, giving the contradiction that $i,j,$ or $\ell$ equals $K$. This means that there are at least three values of $1\le \ell< K$ for which $u_\ell\in [1/2,1)$ and $\vec{v}_\ell\in\RR_{>0}\mc{S}$. At least two of these vectors lie either above or below the $x$-axis. Since $u_k$ is also in $[1/2,1)$, this means (by the same argument just given) that either $u_k$, or one of these $u_\ell$ values, is $u_K$. This is also a contradiction, so we conclude that $A_1\le 2.$ Putting this all together, we have that $\scrG_\scrD(M)\le 1+2+4+2=9$.
	
	\item If $S_2=1$ or $2$, then we must have that $A_2\le 3$. To see why, suppose $A_2\ge 4$ and choose $1\le i_1<i_2<i_3<i_4< K$ so that $u_{i_1},\ldots,u_{i_4}\notin (-1/2,1/2)$ and $\vec{v}_{i_1},\ldots,\vec{v}_{i_4}\notin\mc{S}$. By the same argument as before, we cannot have three of these $u_i$ values in either $(-1,-1/2]$ or in $[1/2,1)$. Therefore two of them must be in one of these intervals, and the other two must be in the other interval. There is at least one value of $1\le\ell<K$ with $u_\ell\notin(-1/2,1/2)$ and $\vec{v}_\ell\in\RR_{>0}\mc{S}$, and as before, this implies that one of the vectors we have just listed is $\vec{v}_K$; a contradiction with \eqref{eqn.u_KSmall}. Therefore $A_2\le 3$. This shows that $\scrG_\scrD(M)\le 1+2+2+3=8$.
	
	\item Assume $S_2=0$. By Proposition \ref{prop.LargeTSmallAngle}, we have that $A_2\le 5$, and so $\scrG_\scrD(M)\le 1+2+0+5=8$.
	\end{enumerate}
	
	\vspace*{5bp}
	\item Assume  $S_1=0$. Then $A_1\le 4$ by Propositions \ref{prop.SmallTSmallAngle} and \ref{prop.AsymCone1}. But $A_1=4$ contradicts Proposition \ref{prop.v_KSmallT}, so in fact $A_1\le 3$. If $A_1=3$ then by Proposition \ref{prop.v_KSmallT}  we have that \eqref{eqn.u_KSmall} holds and, for the problem of bounding $A_2+S_2$, we are in the same position as we just were in Case A. By exactly the same arguments, we therefore have that $A_2+S_2\le 6$, and that $\scrG_\scrD(M)\le 0+3+6=9$. \vspace*{5bp}
	
	\noindent If $A_1\le 2$ then $S_2\le 5$ by Proposition \ref{prop.SymConeUB}, and we again break the problem into cases.
	\begin{enumerate}
	\item If $S_2=5$ then $\vec{v}_K\in\RR_{>0}\mc{S}$, and we claim that $A_2\le 2$. To see why, suppose that $1\le u_1<\cdots <u_4<K$ are chosen so that $\vec{v}_{i_1},\ldots,\vec{v}_{i_4}$ are in direction $\mc{S}$. Suppose that at least three of these vectors all have their $u_i$ values either in $(-1,-1/2]$ or in $[1/2,1)$ and without loss of generality, suppose these values lie in $(-1,-1/2]$. Then as before, since at least two of these vectors both lie either above or below the $x$-axis, there cannot be a value of $1\le j<K$ with $u_j\in (-1,-1/2]$ and with $\vec{v}_j\notin\RR_{>0}\mc{S}.$ Furthermore, by Proposition \ref{prop.LargeTSmallAngle}, there can be at most two values of $1\le j< K$ with $u_j\in [1/2,1)$ and $\vec{v}_j\notin\mc{S}$, which gives $A_2\le 2$. The other possibility is that two of values of $u_{i_1},\ldots ,u_{i_4}$ lie in $(-1,-1/2]$, and two lie in $[1/2,1)$. In this case, by the same arguments as above, there cannot be two values of $1\le j< K$ with $\vec{v}_j\notin\RR_{>0}\mc{S}$ and with $u_j\in (-1,-1/2]$, and neither can there be two values with $u_j\in [1/2,1)$. This again gives $A_2\le 2$. This shows that if $A_1\le 2$ and $S_2=5$ then $\scrG_\scrD(M)\le 0+2+5+2=9$. \vspace*{5bp}
	
	\item If $S_2=3$ or $4$ then we claim that $A_2\le 3$. To see why this is true, suppose by way of contradiction that $S_2=3$ or $4$ and that $\vec{v}_{i_1},\ldots,\vec{v}_{i_4}\notin\RR_{>0}\mc{S}$ are distinct vectors with corresponding $u_i$ values all in $(-1,-1/2]\cup [1/2,1)$. We cannot have four of these values all in $(-1,-1/2]$ or all in $[1/2,1)$. If three of them all lie in one of these intervals, then without loss of generality let us suppose that the interval is $(-1,-1/2]$, the corresponding indices are $i_2,i_3,$ and $i_4$ and that the largest of these indices is $i_4$. We must then have that $i_4=K$. We have that $u_{i_1}$ lies in $[1/2,1)$, so to avoid contradiction there can be at most two values of $1\le \ell<K$ with $u_\ell\in[1/2,1)$ and $\vec{v}_\ell\in\RR_{>0}\mc{S}$. Then, there must be at least one value of $\ell$ with $u_\ell\in (-1,-1/2]$ and $\vec{v}_\ell\in\RR_{>0}\mc{S}$. However, we then conclude by previous arguments that either this $u_\ell$ value, or one of $u_{i_1}$ or $u_{i_2},$ must equal $u_K$. This is a contradiction. We are left with the possibility that two of the numbers $u_{i_1},\ldots ,u_{i_4}$ lie in $(-1,-1/2]$, and that the other two lie in $[1/2,1)$. This then implies that there is at most one value of $1\le\ell\le K$ with $\vec{v}_\ell\in\RR_{>0}\mc{S}$ and $u_\ell\in (-1,-1/2]$, and similarly at most one with $\vec{v}_\ell\in\RR_{>0}\mc{S}$ and $u_\ell\in [1/2,1)$. This contradicts the assumption that $S_2=3$ or $4$, so we conclude that in this case $A_2\le 3$. This gives that $\scrG_\scrD(M)\le 0+2+4+3=9$. \vspace*{5bp}
	
	\item Finally, if $S_2\le 2$ we use the bound $A_2\le 5$ and obtain $\scrG_\scrD(M)\le 0+2+2+5=9.$
	\end{enumerate}
\end{enumerate}\vspace*{10bp}

\noindent \textbf{Case (C2)}, $\tau=4\pi/3$: In this case $\psi=2\pi/3$ and $\phi=\pi/3$, and so $\lceil\frac{\phi}{\pi/3}\rceil= 1$. There are two asymmetric cones, $\mc{A}_{-1}=[-\pi/3,0)$ and $\mc{A}_{1}=[0,\pi/3).$\vspace*{5bp}

\begin{enumerate}[A.]
	\item Assume $S_1=1$. Then \eqref{eqn.u_KSmall} holds, and as in the previous Case A we have $A_1\le 2$ and $S_2\le 2$. If $S_2=2$ then choose $1\le i,j\le K-1$ with $i\not=j$ and $\vec v_i,\vec v_j\in\RR_{>0}\mc{S}$. If $u_i$ and $u_j$ both lie in $(-1,-1/2]$ then there can be at most one value of $1\le k\le K-1$ with $\vec v_k\not\in\RR_{>0}\mc{S}$ and with $u_k\in(-1,-1/2]$, and at most two values with $u_k\in [1/2,1)$, which gives $A_2\le 3$. Similarly if $u_i$ and $u_j$ both lie in $[1/2,1)$. If $u_i\in(-1,-1/2]$ and $u_j\in[1/2,1)$, or vice-versa, then there can be at most one value of $1\le k\le K-1$ with $\vec v_k\not\in\RR_{>0}\mc{S}$ and with $u_k\in(-1,-1/2]$, and also at most one value with $u_k\in [1/2,1)$, so in this case $A_2\le 2$. This gives the bound $\scrG_\scrD(M)\le 1+2+2+3=8.$ \vspace*{5bp}
	
	\noindent If $S_2=0$ or $1$ we use the bound $A_2\le 4$, which follows from Proposition \ref{prop.LargeTSmallAngle} and \eqref{eqn.u_KSmall}. Thus $\scrG_\scrD(M)\le 1+2+1+4=8$. \vspace*{5bp}
	
	\item Assume $S_1=0$. Then $A_1\le 3$. If $A_1=3$ then \eqref{eqn.u_KSmall} holds and, for the problem of bounding $A_2+S_2$, we may follow exactly the same arguments just used in Case A to obtain the bound $A_2+S_2\le 5$. Therefore $\scrG_\scrD(M)\le 0+3+5 =8$.\vspace*{5bp}
	
	\noindent If $A_1\le 2$ we use the bound $S_2\le 3$ from Proposition \ref{prop.SymConeUB}. By arguments similar to those in the previous Case B (b) and (c): If $S_2=3$ then $A_2\le 3$ and $\scrG_\scrD(M)\le 0+2+3+3=8$. If $S_2=1$ or $2$ then $A_2\le 4$ and $\scrG_\scrD(M)\le 0+2+2+4=8$. If  $S_2=0$ then $A_2\le 5$ and $\scrG_\scrD(M)\le 0+2+0+5=7$.
\end{enumerate}\vspace*{10bp}

\noindent \textbf{Case (C3)}, $\pi<\tau<4\pi/3$: In this case $2\pi/3<\psi<\pi$ and $0<\phi<\pi/3$. Now there are three asymmetric subsets, $\mc{A}_{-1}=[-\psi/2,-\pi/6),~\mc{A}_{0}=[-\pi/6,\pi/6),$  and  $\mc{A}_{1}=[\pi/6,\psi/2)$. The argument is similar to those given previously, except that to bound the number of vectors $\vec{v}_i$ in the direction of $\mc{A}_{-1}\cup\mc{A}_1$ with  $u_i\in (-1/2,1/2)$, we must now use Proposition \ref{prop.AsymCone2}. Also, we do not try to optimize the argument as much, since the bound in Proposition \ref{prop.AsymCone2} is probably already sub-optimal.\vspace*{5bp}

\begin{enumerate}[A.]
	\item If $S_1=1$ then \eqref{eqn.u_KSmall} holds. As before, Proposition \ref{prop.v_KSmallT} guarantees that all vectors $\vec{v}_i$ with $i<K$ have corresponding $u_i$ values all in $(-1/2,0]$ or all in $[0,1/2)$. Therefore, by Propositions \ref{prop.SmallTSmallAngle} and \ref{prop.AsymCone1} there is at most one such vector pointing in the direction of $\mc{A}_0$. Similarly, by Propositions \ref{prop.SmallTSmallAngle} and \ref{prop.AsymCone2} (applied to each of the cones $\mc{A}_{-1}$ and $\mc{A}_1$), the number of such vectors lying in the direction of $\mc{A}_{-1}\cup\mc{A}_1$ is at most
	\begin{equation}
		2+2\left\lfloor\frac{\sin (\tau/2+\pi/6)}{\sin(\tau-\pi)}\right\rfloor.
	\end{equation}
	This gives the bound
	\begin{equation}
	A_1\le 3+2\left\lfloor\frac{\sin (\tau/2+\pi/6)}{\sin(\tau-\pi)}\right\rfloor.
	\end{equation}
	Since $\lceil\frac{\phi}{\pi/3}\rceil= 1$, Proposition \ref{prop.SymConeUB} implies that $S_2\le 2$. Also, Proposition \ref{prop.LargeTSmallAngle} (applied to each of the three asymmetric sets) implies that $A_2\le 6$, so we have that
	\begin{equation}\label{eqn.C3g_NBnd}
	\scrG_\scrD(M)\le 12+2\left\lfloor\frac{\sin (\tau/2+\pi/6)}{\sin(\tau-\pi)}\right\rfloor.
	\end{equation}
	
	\item Finally, suppose that $S_1=0$. In this case, again by Propositions \ref{prop.SmallTSmallAngle}-\ref{prop.AsymCone2}, there can be at most
\begin{equation}
		3+2\left\lfloor\frac{\sin (\tau/2+\pi/6)}{\sin(\tau-\pi)}\right\rfloor
\end{equation}
indices $1\le i\le K$ with $\vec v_i$ in an asymmetric set and with $u_i\in [0,1/2)$. In addition to these vectors, if there is any other vector $\vec v_j$ in an asymmetric set, with $u_j\in (-1/2,0)$, then it would follow from Proposition \ref{prop.v_KSmallT} that $j=K$. Therefore there can be at most one such vector $\vec v_j$. The same argument applies with the intervals $[0,1/2)$ and $(-1/2,0)$ replaced by $(-1/2,0]$ and $(0,1/2)$, respectively, and this gives the bound
\begin{equation}\label{eqn.A_1IneqCaseC3Final}
A_1\le 4+2\left\lfloor\frac{\sin (\tau/2+\pi/6)}{\sin(\tau-\pi)}\right\rfloor.
\end{equation}
If equality holds in this inequality then we know that \eqref{eqn.u_KSmall} holds, we are in the same situation just encountered in Case A, and we again use the bounds $S_2\le 2$ and $A_2\le 6$, arriving at the same bound \eqref{eqn.C3g_NBnd} for $\scrG_\scrD(M)$.

If there is strict inequality in \eqref{eqn.A_1IneqCaseC3Final} then Proposition \ref{prop.SymConeUB} implies that $S_2\le 3,$ and it also tells us that if $S_2=3$ then $\vec v_K$ lies in a symmetric set. Similarly, we have from Proposition \ref{prop.LargeTSmallAngle} that $A_2\le 7$, and that if $A_2=7$ then $\vec v_K$ lies in an asymmetric set. Therefore we cannot have both $S_2=3$ and $A_2=7$. This gives that $S_2+A_2\le 9$, which again implies that \eqref{eqn.C3g_NBnd} holds.

\end{enumerate}

\section{Explicit upper bounds in dimension $d=2$, part 2}\label{sec.2DBddPfPart2}

Throughout this section we set $d=2$ and $\scrD=\SS_1^1$. In this case the upper bound of 6 obtained in the previous section falls just short of establishing the claimed five distance theorem. We will first deduce a little more information about the possible distances which can occur in this case. The goal of this section is to prove the following theorem, which will thereby complete the proof of Theorem \ref{thm.2DBoundedGaps} and the $d=2$ case of Theorem \ref{thm.HighDBoundedGaps}.

\begin{thm}\label{thm.2DBoundedGapsFULL2}
For any $M\in \mathrm{SL}_3(\RR)$ we have that
\begin{equation}
\scrG_{\SS_1^1}(M)\le 5.
\end{equation}
\end{thm}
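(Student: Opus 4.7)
The plan is to argue by contradiction, assuming $\scrG_{\SS_1^1}(M) \ge 6$. From Case (C1) with $\tau = 2\pi$ treated at the start of Section \ref{sec.2DBddPf} we already have $\scrG_{\SS_1^1}(M) \le 6$, so we may fix $K = 6$. Since $\mc{S} = \SS_1^1$, the condition $\vec v_i \in \RR_{>0}\mc{S}$ is automatic for $\vec v_i \ne \vec 0$. By replacing $(u_i, \vec v_i)$ by $(-u_i, -\vec v_i)$ where necessary we may assume $u_i \in [0, 1)$ for all $i$. Proposition \ref{prop.SymSmallT} then forces any index $i$ with $u_i \in [0, 1/2)$ to equal $K$. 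Setting $S_1 = \{i : u_i \in [0, 1/2)\}$ and $S_2 = \{i : u_i \in [1/2, 1)\}$ and recalling $|S_2| \le 5$ from the Case (C1) argument, we must have $|S_1| = 1$ and $|S_2| = 5$; in particular $u_K \in [0, 1/2)$.

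The next step is to extract a new constraint not used in Section \ref{sec.2DBddPf}. For any distinct $i, j \in S_2$ the lattice point $(u_i - u_j, \vec v_i - \vec v_j)$ has $|u_i - u_j| < 1/2$, so it together with its negative lies in $\scrQ_{\SS_1^1}(M, t)$ for every $t \in (0, 1)$. Hence $F_{\SS_1^1}(M, t) \le |\vec v_i - \vec v_j|$ for all $t \in (0, 1)$, and taking the supremum in $t$ (using that the maximum value attained by $F_{\SS_1^1}(M, \cdot)$ equals $|\vec v_K|$ by (V1)--(V3)) yields the key inequality
\begin{equation*}
|\vec v_i - \vec v_j| \ge |\vec v_K| > \max(|\vec v_i|, |\vec v_j|), \qquad \text{for all } i \ne j \text{ in } S_2.
\end{equation*}
This is strictly stronger than the Proposition \ref{prop.LengthReduc} type bound used earlier because the lower bound is $|\vec v_K|$ rather than just $\max(|\vec v_i|,|\vec v_j|)$. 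The same argument applied to pairs $\{i, K\}$ with $u_i - u_K < 1/2$ yields $|\vec v_K - \vec v_i| \ge |\vec v_K|$, which translates into a lower bound on the angle between $\vec v_K$ and such $\vec v_i$.

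The final step is to derive a geometric contradiction from these constraints. The five vectors $\vec v_{i_1}, \ldots, \vec v_{i_5}$ in $S_2$ have pairwise direction-angles exceeding $\pi/3$ (otherwise Proposition \ref{prop.LengthReduc} would force $|\vec v_i - \vec v_j| < |\vec v_K|$, contradicting the previous display), so ordering them by argument the five consecutive gaps lie in $(\pi/3, 2\pi/3)$ and sum to $2\pi$. For a consecutive pair $\vec v_a, \vec v_b$ with angle $\theta \in (\pi/3, 2\pi/3)$, the law of cosines combined with $|\vec v_a - \vec v_b|^2 \ge |\vec v_K|^2$ and the strict upper bound $|\vec v_a|, |\vec v_b| < |\vec v_K|$ pins both lengths into a narrow range just below $|\vec v_K|$. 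Inserting $\vec v_K$ into this near-regular pentagonal configuration and using the additional angular restriction from the previous paragraph, a case split on which of the five directional sectors of the pentagon contains $\vec v_K$ produces the desired contradiction.

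The main obstacle will be carrying out the geometric step cleanly: the inequalities are essentially tight, since a regular pentagon inscribed in the circle of radius $|\vec v_K|$ together with one vector of length exactly $|\vec v_K|$ would saturate the constraints. The contradiction must therefore be squeezed out of the \emph{strict} inequalities $|\vec v_{i_k}| < |\vec v_K|$ and angle $> \pi/3$ (rather than $\ge$), combined with the precise additional bound on $\vec v_K$, which will likely require a delicate trigonometric case analysis.
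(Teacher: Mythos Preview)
Your setup is correct, and your key inequality $|\vec v_i-\vec v_j|\ge |\vec v_K|$ for distinct $i,j\in S_2$ is both valid and slightly stronger than what the paper records in \eqref{eqn.vBd1}. The gap is in your endgame. You propose to locate $\vec v_K$ relative to a ``near-regular pentagonal configuration'' and squeeze a contradiction out of strict inequalities by a trigonometric case split. But, as you yourself note, a regular pentagon of radius $|\vec v_K|$ saturates all of your constraints, so any argument along these lines has to exploit strictness in a very delicate way; and your extra constraint on $\vec v_K$ only applies to those $i\in S_2$ with $u_i-u_K<1/2$, which need not be all of them. Nothing in your outline indicates how the case analysis would actually close, and I do not see that it can be made to work using only the pairwise difference bounds and the partial information about $\vec v_K$.

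The missing idea is a \emph{triple} constraint, not a refined pairwise one. For $1\le i<j<k\le K-1$ one has $u_i,u_j,u_k\in[1/2,1)$ with $u_k<u_j<u_i$ (by Proposition~\ref{prop.ADDI}), hence $0<u_j+u_k-u_i<u_k$. Applying Proposition~\ref{prop.ADDI2} to the lattice point $(u_j+u_k-u_i,\vec v_j+\vec v_k-\vec v_i)$ gives
\[
|\vec v_i-\vec v_j-\vec v_k|\ge |\vec v_k|.
\]
This is exactly the paper's \eqref{eqn.vBd2}, and it implies (Proposition~\ref{prop.vAngles}) that $\vec v_i$ cannot lie in the positive cone of $\vec v_j$ and $\vec v_k$ whenever $i<\min\{j,k\}$. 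With this in hand the contradiction is purely combinatorial and requires no case analysis: among $\vec v_1,\ldots,\vec v_5$, consecutive angles exceed $\pi/3$, so the two neighbours $\vec v_j,\vec v_k$ of $\vec v_1$ span an angle less than $\pi$ on the $\vec v_1$ side, forcing $\vec v_1$ into their positive cone --- but $1<\min\{j,k\}$, contradiction. The vector $\vec v_K$ plays no role whatsoever in the final step; your attempt to bring it into the geometric argument is a detour.
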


Since we are dealing with the case $\scrD=\SS_1^1$ we can assume, by replacing each vector $(u_i,\vec{v}_i)$ by its negative if necessary, that $u_i\ge 0$ for $1\le i\le K$. For simplicity we make this assumption for the duration of this section. It is convenient at this point to gather together some additional properties which must be satisfied by the vectors $\vec v_i$.
\begin{prop}\label{prop.vSumAndDiff}
If $\scrG_{\SS_1^1}(M)= K$ then, for all $1\le i<j\le K-1$,
\begin{align}
|\vec v_i-\vec v_j|&>|\vec v_j|,\label{eqn.vBd1}
\end{align}
and for all $1\le i<j<k\le K-1$,
\begin{align}
|\vec v_i-\vec v_j-\vec v_k|&\ge|\vec v_k|.\label{eqn.vBd2}
\end{align}
\end{prop}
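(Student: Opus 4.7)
The strategy is to derive both inequalities by applying Proposition \ref{prop.ADDI2} to suitable integer-linear combinations of the chosen representatives $(u_i,\vec v_i)$, using crucially that $\scrD=\SS_1^1$ forces every nonzero lattice-point difference with $|u|<1$ to lie in $\scrQ_\scrD(M)$. The assumption $j\le K-1$ (respectively $k\le K-1$) will be essential throughout: it guarantees the existence of a strictly larger length $|\vec v_{j+1}|$ (respectively $|\vec v_{k+1}|$) in the range of $F_\scrD(M,\cdot)$, which is what will rule out the boundary cases.

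For \eqref{eqn.vBd1}, I would argue by contradiction. Assume $|\vec v_i-\vec v_j|\le|\vec v_j|$ for some $1\le i<j\le K-1$, and consider the lattice vector $(u_j-u_i,\vec v_j-\vec v_i)\in\ZZ^{d+1}M$, which lies in $\scrQ_\scrD(M)$ by (V1). After replacing it by its negation if necessary, arrange $u_j-u_i\in[0,1)$. Since we have $0\le u_j-u_i\le u_j$ (using the WLOG normalization $u_i\ge 0$), Proposition \ref{prop.ADDI2} applied with index $j$ yields $|\vec v_j|\le|\vec v_j-\vec v_i|$, forcing equality. To exclude the equality case, I apply (V3) at index $j+1$: there exists $t^\ast\in(0,1)$ with $F_\scrD(M,t^\ast)=|\vec v_{j+1}|>|\vec v_j|$. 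Such a $t^\ast$ must lie outside the coverage intervals of both pairs $\pm(u_j,\vec v_j)$ and $\pm(u_j-u_i,\vec v_j-\vec v_i)$. Intersecting the complements shows $t^\ast\in[\,1-u_j+u_i,\,u_j-u_i\,]$, which forces $u_j-u_i\ge 1/2$. Iterating the construction with index $K$ in place of $j+1$ (using that $(u_j-u_i,\vec v_j-\vec v_i)$ is itself a valid representative of length $|\vec v_j|$) repeatedly shrinks the admissible window for $t^\ast$, eventually contradicting (V3) at index $K$.

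For \eqref{eqn.vBd2}, I would assume $|\vec v_i-\vec v_j-\vec v_k|<|\vec v_k|$ for some $1\le i<j<k\le K-1$ and consider the lattice vector $(u_i-u_j-u_k,\vec v_i-\vec v_j-\vec v_k)\in\ZZ^{d+1}M$, whose first coordinate lies in $(-2,1)$. Distinguishing cases according to whether this coordinate lies in $[0,u_k]$, $[-u_k,0]$, or outside $[-1,1]$ — and in the last case replacing the triple by an equivalent one obtained from the negation of one of its summands — one can always reduce to a configuration in which Proposition \ref{prop.ADDI2} applies with index $k$, yielding $|\vec v_k|\le|\vec v_i-\vec v_j-\vec v_k|$ and contradicting the hypothesis. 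The configurations not immediately handled by Proposition \ref{prop.ADDI2} are treated by the same coverage-interval technique used for \eqref{eqn.vBd1}, invoking (V3) for indices strictly greater than $k$.

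The main obstacle in both parts is the arithmetic of the first coordinate: a direct application of Proposition \ref{prop.ADDI2} only covers a subset of the admissible configurations of $(u_i,u_j)$ (or $(u_i,u_j,u_k)$), and the remaining configurations require one to combine several lattice vectors and their negations while tracking the set of $t$ at which $F_\scrD(M,\cdot)$ can attain each value $|\vec v_\ell|$ for $\ell>j$ (respectively $\ell>k$). The fact that \eqref{eqn.vBd1} is strict while \eqref{eqn.vBd2} is non-strict reflects the finer control available in the former: there one compares to the immediately next index $j+1$, while in the latter the vector $\vec v_i-\vec v_j-\vec v_k$ is allowed to realize the length $|\vec v_k|$ itself as a valid representative.
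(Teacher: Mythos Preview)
Your proposal misses the one observation that makes this proposition almost trivial, and without it the argument you sketch does not go through as written.

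The paper's proof begins by noting that, under the convention $u_\ell\ge 0$ and since $\scrD=\SS_1^1$, Proposition~\ref{prop.SymSmallT} forces $u_\ell\in[1/2,1)$ for every $\ell\le K-1$; Proposition~\ref{prop.ADDI} then gives the strict ordering $u_i>u_j>u_k$ whenever $i<j<k\le K-1$. With these two facts in hand, \eqref{eqn.vBd1} is immediate: $0<u_i-u_j<1/2\le u_j$, so Proposition~\ref{prop.ADDI2} applies directly to $(u_i-u_j,\vec v_i-\vec v_j)$, and equality is ruled out in one line because the first coordinate lies in $(-1/2,1/2)$ (the argument of Proposition~\ref{prop.SymSmallT} then gives $F_\scrD(M,t)\le|\vec v_j|$ for all $t$, contradicting $j\le K-1$). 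For \eqref{eqn.vBd2} one simply checks $0<u_k+u_j-u_i<u_k$ (using $u_j+u_k\ge 1>u_i$ and $u_j<u_i$) and applies Proposition~\ref{prop.ADDI2} once.

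Your argument never invokes Proposition~\ref{prop.SymSmallT}, and this is a genuine gap, not just an inefficiency. Your application of Proposition~\ref{prop.ADDI2} in part~\eqref{eqn.vBd1} requires $|u_i-u_j|\le u_j$, which you do not verify; with the normalisation $u_\ell\ge 0$ one actually has $u_i>u_j$ (by Proposition~\ref{prop.ADDI}), so the inequality ``$0\le u_j-u_i\le u_j$'' you write is false as stated, and after the correct negation the needed bound $u_i-u_j\le u_j$ is exactly what $u_j\ge 1/2$ supplies. The ``iterating the construction'' step you propose for the equality case is vague and unnecessary: once you know $|u_i-u_j|<1/2$, a single appeal to the Proposition~\ref{prop.SymSmallT} argument finishes it. Similarly, for \eqref{eqn.vBd2} your case split on whether $u_i-u_j-u_k$ lies in $[0,u_k]$, $[-u_k,0]$, or outside $[-1,1]$ omits the range $(-1,-u_k)\cup(u_k,1)$ entirely; in fact $u_i-u_j-u_k$ always lies in $(-u_k,0)$ once you know the $u_\ell$ are in $[1/2,1)$ and ordered, so no case analysis is needed at all.
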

\begin{proof}
Let $1\le i<j\le K-1$. To prove \eqref{eqn.vBd1} first note that by Proposition \ref{prop.SymSmallT}, together with the above mentioned convention that $u_i, u_j\ge 0$, we have
\begin{equation}
0<u_i-u_j<1/2.
\end{equation}
Since $\scrG_{\SS_1^1}(M)=K$ we must have that $u_j\ge 1/2$. Therefore, applying Proposition \ref{prop.ADDI2} to the vector $(u_i-u_j,\vec{v}_i-\vec{v}_j)\in\scrQ_\scrD(\SS_1^1)$ gives that
\begin{equation}
|\vec v_i-\vec v_j|\ge|\vec v_j|.
\end{equation}
If there were equality in this inequality then, by the argument used in the proof of Proposition \ref{prop.SymSmallT}, we would have that $j=K$, which is a contradiction. Therefore, the strict inequality \eqref{eqn.vBd1} holds.

Next, to prove \eqref{eqn.vBd2}, let $1\le i<j<k\le K-1$. We have that $1/2\le u_k<u_j<u_i<1$, and therefore that
\begin{equation}
0<u_k+u_j-u_i<u_k.
\end{equation}
Applying Proposition \ref{prop.ADDI2} to the vector $(u_k+u_j-u_i,\vec{v}_k+\vec{v}_j-\vec v_i)\in\scrQ_\scrD(\SS_1^1)$ gives that
\begin{equation}
|\vec{v}_k+\vec{v}_j-\vec v_i|\ge|\vec v_k|,
\end{equation}
which proves the result.
\end{proof}
As a corollary of Proposition \ref{prop.vSumAndDiff}, we also deduce the following result.
\begin{prop}\label{prop.vAngles}
If $\scrG_{\SS_1^1}(M)= K$ then, for all $1\le i<j\le K-1$, the angle between $\vec{v}_i$ and $\vec{v}_j$ must be greater than $\pi/3$. Also, for all $1\le i<j<k\le K-1$, the vector $\vec v_i$ does not lie in the positive cone determined by $\vec v_j$ and $\vec v_k$.
\end{prop}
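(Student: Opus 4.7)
The plan is to derive both assertions from the inequalities of the preceding proposition combined with the elementary length-reduction principle of Proposition~\ref{prop.LengthReduc}.

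The angle assertion follows by a one-line contradiction: assume the angle between $\vec v_i$ and $\vec v_j$ is at most $\pi/3$ for some $1\le i<j\le K-1$. By (V1) the lengths satisfy $0<|\vec v_i|<|\vec v_j|$, so one of the two parts of Proposition~\ref{prop.LengthReduc} (strict inequality when the angle is strictly less than $\pi/3$, and the unequal-lengths case when it equals $\pi/3$) gives $|\vec v_i-\vec v_j|<|\vec v_j|$, directly contradicting \eqref{eqn.vBd1}.

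For the positive-cone assertion, suppose $\vec v_i$ lies in the positive cone spanned by $\vec v_j$ and $\vec v_k$, for some $1\le i<j<k\le K-1$. The collinear cases are excluded by the first part (an angle of $0$ to $\vec v_j$ or to $\vec v_k$ is ruled out), so $\vec v_i$ lies strictly inside an honest two-dimensional cone, and the angles $\beta:=\angle(\vec v_i,\vec v_j)$ and $\gamma:=\angle(\vec v_i,\vec v_k)$ add to $\theta:=\angle(\vec v_j,\vec v_k)$. By the first part, $\beta,\gamma>\pi/3$, which forces $\theta>2\pi/3$ and hence $\cos\theta<-1/2$. I aim to show $|\vec v_j+\vec v_k-\vec v_i|<|\vec v_k|$, which contradicts \eqref{eqn.vBd2}.

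The point $\vec v_i$ lies in the convex sector
\[ T=\{\vec v\in\RR^2:\pi/3\le\angle(\vec v,\vec v_j)\le\theta-\pi/3,\;|\vec v|\le|\vec v_j|\} \]
inside the cone of $\vec v_j$ and $\vec v_k$. Since $f(\vec v):=|\vec v_j+\vec v_k-\vec v|^2$ is convex, its maximum on $T$ is attained at an extreme point, namely either the apex $\vec 0$ or a point of the outer arc $|\vec v|=|\vec v_j|$. At the apex, $\cos\theta<-1/2$ together with $|\vec v_j|<|\vec v_k|$ gives $f(\vec 0)=|\vec v_j|^2+|\vec v_k|^2+2|\vec v_j||\vec v_k|\cos\theta<|\vec v_k|^2$. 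On the arc, taking $\vec v_j=(|\vec v_j|,0)$ and $\vec v=|\vec v_j|(\cos\phi,\sin\phi)$ with $\phi\in[\pi/3,\theta-\pi/3]$, the identities $1-\cos\phi=2\sin^2(\phi/2)$ and $\cos\theta-\cos(\theta-\phi)=-2\sin(\theta-\phi/2)\sin(\phi/2)$ simplify $f(\vec v)-|\vec v_k|^2$ to $4|\vec v_j|\sin(\phi/2)\bigl[|\vec v_j|\sin(\phi/2)-|\vec v_k|\sin(\theta-\phi/2)\bigr]$, which is negative because $\phi/2\le\theta-\phi/2$ with $\theta\le\pi$ gives $\sin(\phi/2)\le\sin(\theta-\phi/2)$, and $|\vec v_j|<|\vec v_k|$ then yields strict inequality in the bracket. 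The main subtlety is recognising that the angular and length constraints carve out a convex sector, so that convexity of $f$ reduces the problem to a one-parameter check on the arc; the trigonometric simplification itself is routine.
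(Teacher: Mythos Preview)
Your proof is correct. The first part is identical to the paper's. For the second part, both arguments aim for the same contradiction with \eqref{eqn.vBd2}, namely $|\vec v_i-\vec v_j-\vec v_k|<|\vec v_k|$, but reach it differently. The paper gives a pictorial argument (Figure~\ref{fig.GapsProp1}): after rotating so that $\vec v_k$ lies along the positive $x$-axis, it observes that $-\vec v_j-\vec v_k$ is confined to a certain sector, and that adding $\vec v_i$ (with its angular and length restrictions) can never escape the open disc of radius $|\vec v_k|$; the final containment is asserted from the figure rather than computed. Your route is analytic: you confine $\vec v_i$ to a convex circular sector $T$, exploit convexity of $\vec v\mapsto|\vec v_j+\vec v_k-\vec v|^2$ to reduce to the extreme points of $T$, and then verify the inequality at the apex and along the arc by an explicit trigonometric identity. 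Your argument is fully quantitative and does not rely on a figure; the paper's is shorter and more visual but leaves the key geometric containment to inspection.

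One small imprecision worth tightening: as written, the set $T=\{\vec v:\pi/3\le\angle(\vec v,\vec v_j)\le\theta-\pi/3,\ |\vec v|\le|\vec v_j|\}$ is symmetric about the line through $\vec v_j$ and hence consists of two sectors, not one. You want only the sector lying in the positive cone of $\vec v_j,\vec v_k$ (equivalently, add the constraint $\angle(\vec v,\vec v_k)\le\theta-\pi/3$, or work in coordinates with $\phi\in[\pi/3,\theta-\pi/3]$ as you in fact do on the arc). With that clarification the convexity claim and the extreme-point reduction are correct, and the rest of the computation goes through as you wrote it.
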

\begin{proof}
For the first part of the proposition, if $1\le i<j\le K-1$, then the fact that the angle between $\vec v_i$ and $\vec v_j$ must be greater than $\pi/3$ follows \eqref{eqn.vBd1} together with Proposition \ref{prop.LengthReduc}.

\begin{figure}[ht]
	\centering
	\def\svgwidth{0.7\columnwidth}
	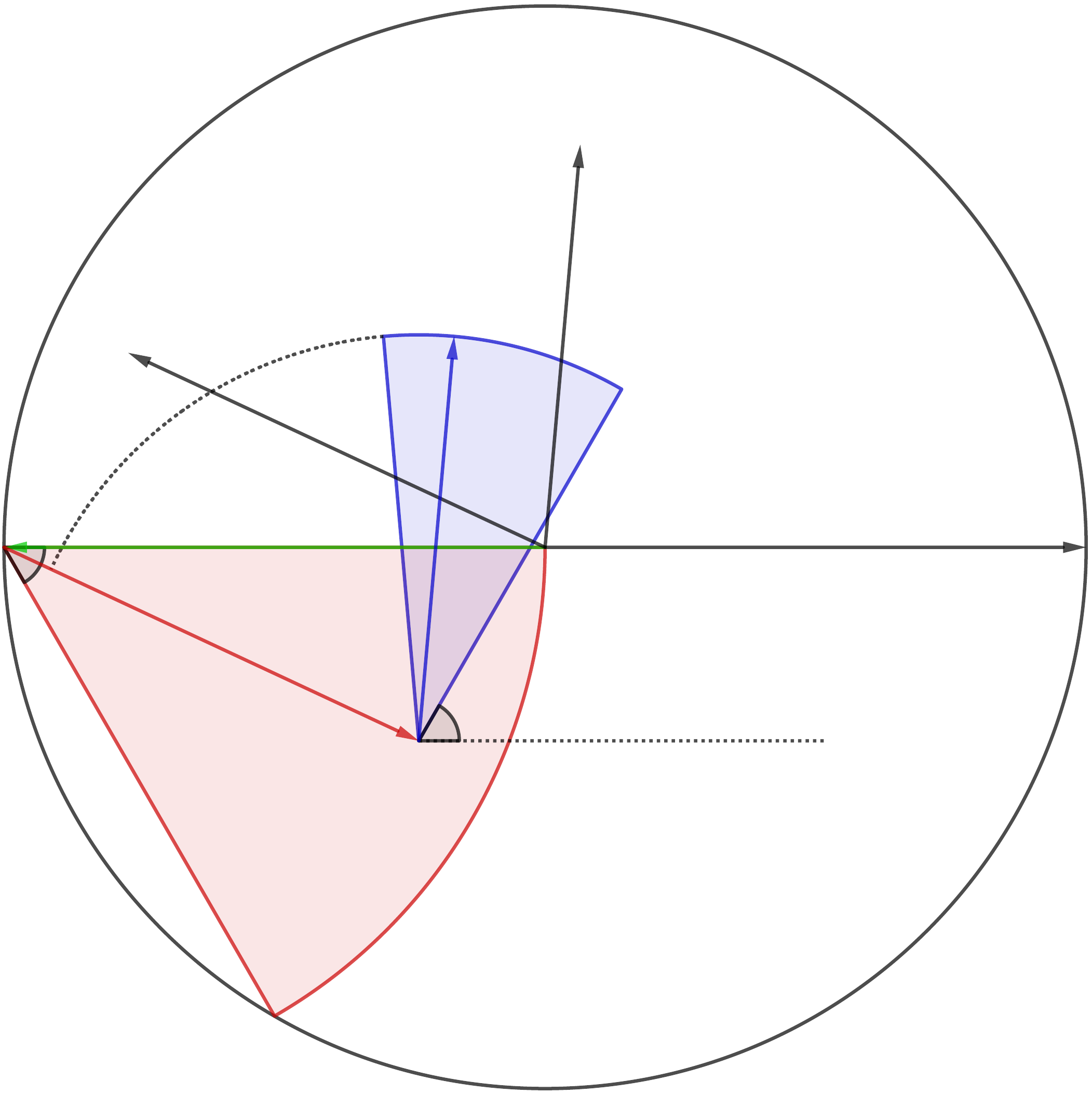
	\caption{Illustration corresponding to the contradictory hypothesis that $\vec v_i$ lies in the positive cone determined by $\vec v_j$ and $\vec v_k$.}\label{fig.GapsProp1}
\end{figure}

For the second part, suppose by way of contradiction that $1\le i<j<k\le K-1$, and that the vector $\vec v_i$ does lie in the positive cone determined by $\vec v_j$ and $\vec v_k$. Note that $|\vec v_i|<|\vec v_j|<|\vec v_k|$ and that, by the first part of the proposition, the angle between $\vec v_j$ and $\vec v_k$ is greater than $2\pi/3$. With these observations in mind, consider Figure \ref{fig.GapsProp1}. The figure is rotated so that the vector $\vec v_k$ is aligned with the positive $x$-axis. Depending on the orientation of the vectors involved, it may also be reflected about the $x$-axis. The vector $-\vec v_j-\vec v_k$ must lie in the sector indicated in red. Once $\vec{v}_j$ is chosen, the vector $\vec v_i-\vec v_j-\vec v_k$ must lie in a sector of the circle of radius $|\vec v_i|$ centered at $-\vec v_j-\vec v_k$, as indicated by the blue region in the figure. However, no matter what choice is made for $\vec v_j$, this sector will lie completely within the open disc of radius $|\vec v_k|$ centered at the origin. 

Since this contradicts \eqref{eqn.vBd2}, we conclude that $\vec v_i$ can not lie in the positive cone determined by $\vec v_j$ and $\vec v_k$.
\end{proof}

\begin{proof}[Proof of Theorem \ref{thm.2DBoundedGapsFULL2}.]
Suppose, contrary to the statement of the theorem, that $\scrG_{\SS_1^1}(M)\ge 6$. Consider the collection of vectors $\vec v_i$, with $1\le i\le 5$. We will say that two vectors from this collection are \textit{consecutive} if there is no other vector from the collection which lies in their positive cone. Every vector in the collection is consecutive to two others. By Proposition \ref{prop.vAngles}, the angle between any pair of consecutive vectors is greater than $\pi/3$. Since there are five vectors in the collection, it follows that if $i,j,$ and $k$ are distinct indices with $1\le i,j,k\le 5$ and if $\vec v_i$ is consecutive to both $\vec v_j$ and $\vec v_k$, then the angle between $\vec v_j$ and $\vec v_k$ is less than $\pi$. In other words, in the situation just described, the vector $\vec v_i$ lies in the positive cone determined by $\vec v_j$ and $\vec v_k$. Therefore, by Proposition \ref{prop.vAngles}, if $i,j,$ and $k$ are distinct and if $\vec v_i$ is consecutive to both $\vec v_j$ and $\vec v_k$, then it must be the case that $i>\min\{j,k\}.$ However, the vector $\vec v_1$ is consecutive to two vectors $\vec v_j$ and $\vec v_k$, with $1<j<k$, and this gives a contradiction. Therefore we conclude that $\scrG_{\SS_1^1}(M)\le 5$.
\end{proof}

\section{Explicit upper bounds in dimension $d>2$}\label{sec.HighDBddPf}
Let $G=\SL(d+1,\RR)$ and $\Gamma=\SL(d+1,\ZZ)$. As in the proof of Theorem \ref{thm.2DBoundedGapsFULL}, for each $M\in\GamG$ we suppose that $K\in\N$ and $\{(u_i,\vec{v}_i)\}_{i=1}^K\subseteq M$ are chosen so that conditions (V1)-(V3) hold. In this section we will prove the following statement, which by \eqref{keyineq} implies Theorem \ref{thm.HighDBoundedGaps} in dimension $d\geq 3$.

\begin{thm}\label{thm.HighDBoundedGapsGEN}
Let $d\ge 3$ and $\scrD=\SS_1^{d-1}$. Then for any $M\in G$ we have that
\begin{equation}
\scrG_\scrD(M)\le \sigma_d+1.
\end{equation}
\end{thm}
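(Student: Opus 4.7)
The plan is to reduce the problem to the defining property of the kissing number $\sigma_d$. Since $\scrD = \SS_1^{d-1}$, the admissibility condition $\vec v \in \RR_{>0}\scrD$ in \eqref{QMt} is just $\vec v \neq 0$, so $\scrQ_\scrD(M)$ is invariant under negation $(u,\vec v) \mapsto (-u,-\vec v)$. Using this symmetry, I first replace each $(u_i,\vec v_i)$ by its negative if necessary so that $u_i \ge 0$ for every $1 \le i \le K$.

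The key step is to establish the higher-dimensional analogue of \eqref{eqn.vBd1}: for all $1 \le i < j \le K-1$,
\begin{equation}
|\vec v_i - \vec v_j| > |\vec v_j|.
\end{equation}
Proposition \ref{prop.LengthReduc} is already stated for arbitrary $d\ge 2$, so once this inequality is in hand, the angle between $\vec v_i$ and $\vec v_j$ must exceed $\pi/3$. To derive the inequality, I note that the proofs of Propositions \ref{prop.SymSmallT}, \ref{prop.NoSameTVal}, \ref{prop.ADDI}, and \ref{prop.ADDI2} depend only on the $u$-coordinates and on the symmetry $\vec v \mapsto -\vec v$ of the admissible set, both of which survive verbatim for $\scrD = \SS_1^{d-1}$ (where $\scrS = \scrD$). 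Chaining these gives $1/2 \le u_j < u_i < 1$. Applying Proposition \ref{prop.ADDI2} with the vector $(u_i - u_j,\vec v_i - \vec v_j)\in\scrQ_\scrD(M)$ — whose first coordinate satisfies $0 < u_i - u_j \le u_j$ since $u_i < 1 \le 2u_j$ — yields the non-strict bound $|\vec v_j| \le |\vec v_i - \vec v_j|$. Strictness then follows because equality would make $F_\scrD(M,t) \le |\vec v_j|$ throughout $(0,1)$: on $(0,\,1-(u_i-u_j))$ the vector itself witnesses this, and on $(u_i-u_j,\,1)$ its negative does, with $u_i - u_j \in (0,1/2)$ ensuring the two ranges cover $(0,1)$. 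This contradicts the existence, by (V3), of some $t$ with $F_\scrD(M,t) = |\vec v_K| > |\vec v_j|$.

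With the angle bound in hand, the unit vectors $\vec v_i/|\vec v_i|$ for $1 \le i \le K-1$ are pairwise separated by angles strictly greater than $\pi/3$, which is precisely the configuration of centers of mutually non-overlapping unit spheres all tangent to the unit sphere centered at the origin. By the definition of $\sigma_d$, at most $\sigma_d$ such unit vectors can coexist, so $K-1 \le \sigma_d$ and hence $\scrG_\scrD(M) = K \le \sigma_d + 1$.

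The main obstacle is essentially bookkeeping: one must carefully verify that the auxiliary propositions of Section \ref{sec.2Dprep} genuinely extend from $d=2$ to $d\ge 3$ in the specific case $\scrD = \SS_1^{d-1}$, and track the case where equality in Proposition \ref{prop.ADDI2} would cause the strict form of \eqref{eqn.vBd1} to fail. Once that is in place, the reduction is clean. I note that the bound $\sigma_d+1$ is somewhat crude — for $d=2$ it would only give $7$, whereas Theorem \ref{thm.2DBoundedGapsFULL2} establishes the sharp value $5$ — which suggests the optimal constant for $d\ge 3$ should be smaller than $\sigma_d+1$, consistent with the conjectured value $9$ for $d=3$.
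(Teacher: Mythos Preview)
Your proposal is correct and follows essentially the same route as the paper. The only cosmetic difference is the order of the contrapositive: you first establish the strict inequality $|\vec v_i-\vec v_j|>|\vec v_j|$ (the higher-dimensional analogue of \eqref{eqn.vBd1}) and then invoke Proposition~\ref{prop.LengthReduc} to get the angle bound, whereas the paper packages the same argument as Proposition~\ref{prop.HighDSameCone} by assuming the angle is at most $\pi/3$ and deriving the contradiction $|\vec v_K|\le|\vec v_i-\vec v_j|<|\vec v_j|$ directly.
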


We will use the following analogues of Propositions \ref{prop.SymSmallT} and \ref{prop.LargeTSmallAngle}.

\begin{prop}\label{prop.HighDSmallT}
Let $d\ge 3$ and $\scrD=\SS_1^{d-1}$. If $(u,\vec v)\in \scrQ_\scrD(M)$ with $u\in(-1/2,1/2)$, then $|\vec{v}_K|\le |\vec v|$. It follows that if, for some integer $1\le i\le K$ we have that $u_i\in(-1/2,1/2)$, then we must have that $i=K$.
\end{prop}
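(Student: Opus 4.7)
The plan is to adapt the argument of Proposition \ref{prop.SymSmallT} to the present setting. The crucial feature used there was the symmetry of $\scrS$ under $\vec v \mapsto -\vec v$; since here $\scrD = \SS_1^{d-1}$ is the full unit sphere, the corresponding positive cone $\RR_{>0}\scrD = \RR^d\setminus\{\vec 0\}$ is trivially symmetric, so the same strategy should go through unchanged in any dimension $d\geq 3$.

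First I would handle the case $u\in[0,1/2)$. For any $t$ with $0<t<1-u$, the pair $(u,\vec v)$ lies in $\scrQ_\scrD(M,t)$, which gives $F_\scrD(M,t)\leq|\vec v|$ on that interval. Since $-\vec v\in\RR_{>0}\scrD$ automatically, the reflected vector $(-u,-\vec v)$ lies in $\scrQ_\scrD(M,t)$ whenever $u<t<1$, so the same inequality $F_\scrD(M,t)\leq|\vec v|$ holds on that interval as well. Because $u<1/2<1-u$, the two intervals overlap and together cover $(0,1)$. Hence $F_\scrD(M,t)\leq|\vec v|$ for every $t\in(0,1)$. By condition (V3) there is some $t^\ast\in(0,1)$ with $F_\scrD(M,t^\ast)=|\vec v_K|$, so $|\vec v_K|\leq|\vec v|$. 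The case $u\in(-1/2,0]$ follows by the symmetric argument, swapping the roles of $(u,\vec v)$ and $(-u,-\vec v)$.

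For the concluding assertion, I would apply the first part to $(u_i,\vec v_i)$: this yields $|\vec v_K|\leq|\vec v_i|$, which combined with the ordering in (V1) forces $i=K$.

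I do not anticipate a real obstacle here: the dimension does not enter the argument in any essential way, the only ingredients being the central symmetry of $\RR_{>0}\SS_1^{d-1}$ and the elementary observation that the two intervals $(0,1-u)$ and $(u,1)$ cover $(0,1)$ whenever $|u|<1/2$. The proposition is essentially the higher-dimensional analogue of Proposition \ref{prop.SymSmallT} specialized to $\scrS=\SS_1^{d-1}$, and the work already done in that proof carries over verbatim.
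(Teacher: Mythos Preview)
Your proposal is correct and follows essentially the same approach as the paper: both use the central symmetry of $\RR_{>0}\SS_1^{d-1}$ to show that $(u,\vec v)$ and $(-u,-\vec v)$ together witness $F_\scrD(M,t)\le|\vec v|$ for all $t\in(0,1)$, then invoke (V1)--(V3) for the second claim. Your write-up is in fact slightly more explicit about the interval covering than the paper's, which simply asserts the conclusion directly.
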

\begin{proof}
Suppose first that $u\in(-1/2,0]$. Then $-u\in[0,1/2)$ and $(-u,-\vec v)\in \scrQ_\scrD(M)$. Therefore, for any $t\in (0,1)$, any shortest vector in $\scrQ_\scrD(M,t)$ can have length at most $|\vec v|$. If $u\in [0,1/2)$ the argument is symmetric, so this verifies the first claim of the proposition.

For the second claim, apply the first with $(u,\vec v)=(u_i,\vec{v}_i)$. Then it follows from properties (V1)-(V3) that $i=K$.
\end{proof}

\begin{prop}\label{prop.HighDSameCone}
Let $d\ge 3$ and $\scrD=\SS_1^{d-1}$. If $1\le i,j\le K$ and
\begin{equation}-1<u_i<u_j\leq -1/2 \qquad\text{or}\qquad 1/2\leq u_j<u_i<1,\end{equation}
then the angle between $\vec{v}_i$ and $\vec{v}_j$ is greater than $\pi/3$.
\end{prop}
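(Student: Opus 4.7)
The plan is to mimic the proof of Proposition \ref{prop.LargeTSmallAngle}, using Proposition \ref{prop.HighDSmallT} as the higher-dimensional replacement for Proposition \ref{prop.SymSmallT}, and then exploit the fact that when $\scrD=\SS_1^{d-1}$, every nonzero vector of $\RR^d$ automatically lies in $\RR_{>0}\scrD$. Because the cone $\scrD$ is the whole sphere, we are free to negate $(u,\vec v)$, so after replacing $(u_i,\vec v_i)$ and $(u_j,\vec v_j)$ by their negatives if needed, it is enough to treat the case
\begin{equation}
1/2 \le u_j < u_i < 1.
\end{equation}

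Under this assumption, Proposition \ref{prop.ADDI} gives $i<j$ and $|\vec v_i|<|\vec v_j|$; in particular $\vec v_i \ne \vec v_j$. Suppose for contradiction that the angle between $\vec v_i$ and $\vec v_j$ is at most $\pi/3$. Since the strict inequality in Proposition \ref{prop.LengthReduc} holds either when the angle is strictly less than $\pi/3$, or when it equals $\pi/3$ with unequal lengths, we obtain
\begin{equation}
|\vec v_i - \vec v_j| < \max\{|\vec v_i|,|\vec v_j|\} = |\vec v_j|.
\end{equation}

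Now consider the vector $(u_i-u_j,\vec v_i - \vec v_j) \in \ZZ^{d+1} M$. We have $\vec v_i - \vec v_j \ne \vec 0$, hence $\vec v_i-\vec v_j \in \RR_{>0}\scrD$ automatically because $\scrD = \SS_1^{d-1}$; and $0 < u_i - u_j < 1/2$ since $u_j \ge 1/2$ and $u_i<1$. Thus $(u_i-u_j,\vec v_i-\vec v_j)\in\scrQ_\scrD(M)$ with first coordinate in $(-1/2,1/2)$. Applying Proposition \ref{prop.HighDSmallT} yields $|\vec v_K| \le |\vec v_i - \vec v_j| < |\vec v_j|$, which contradicts (V1) (as $j\le K$ forces $|\vec v_j|\le |\vec v_K|$). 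This establishes the claim.

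The proof is essentially routine once the right observation is made, so there is no single hard step. The only thing to be careful about is the borderline case of angle exactly $\pi/3$, which is handled by the second clause of Proposition \ref{prop.LengthReduc} together with the strict ordering $|\vec v_i|<|\vec v_j|$ coming from Proposition \ref{prop.ADDI}. The other alternative $-1 < u_i < u_j \le -1/2$ reduces to the treated case by replacing each $(u_k,\vec v_k)$ with $(-u_k,-\vec v_k)$, which lies in $\scrQ_\scrD(M)$ since $\scrD$ is the full sphere.
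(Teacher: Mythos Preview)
Your proof is correct and follows essentially the same approach as the paper: assume the angle is at most $\pi/3$, use Proposition~\ref{prop.LengthReduc} to get $|\vec v_i-\vec v_j|<|\vec v_j|$, observe that $(u_i-u_j,\vec v_i-\vec v_j)\in\scrQ_\scrD(M)$ has first coordinate in $(-1/2,1/2)$, and apply Proposition~\ref{prop.HighDSmallT} to contradict (V1). One small caveat: Proposition~\ref{prop.ADDI} sits in Section~\ref{sec.2Dprep}, which is written under the standing assumption $d=2$, so strictly speaking you should say (as the paper does) that the conclusion $i<j$, $|\vec v_i|<|\vec v_j|$ follows from the \emph{argument} of Proposition~\ref{prop.ADDI2}, which is dimension-independent.
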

\begin{proof}
Suppose the hypotheses of the proposition are satisfied and that $u_i,u_j\in (-1,-1/2]$ with $u_i<u_j$. Then conditions (V1) and (V3) imply that $i<j$ and that $|\vec{v}_i|<|\vec{v}_j|$ (this follows, for example, from the argument used in the proof of Proposition \ref{prop.ADDI2}). If the angle between $\vec{v}_i$ and $\vec{v}_j$ were less than or equal to $\pi/3$ then by Proposition \ref{prop.LengthReduc} we would have that
\begin{equation}
|\vec{v}_i-\vec{v}_j|< |\vec{v}_j|.
\end{equation}
Since $(u_i-u_j,\vec{v}_i-\vec{v}_j)\in \scrQ_\scrD(M)$ satisfies $u_i-u_j\in(-1/2,1/2)$, we could then deduce from Proposition \ref{prop.HighDSmallT} that
\begin{equation}
|\vec{v}_K|\le |\vec{v}_i-\vec{v}_j|<|\vec{v}_j|,
\end{equation}
which contradicts (V1). Therefore the angle between $\vec{v}_i$ and $\vec{v}_j$ is greater than $\pi/3$. The argument is symmetric if $u_i,u_j\in [1/2,1)$ with $u_i>u_j$.
\end{proof}

\begin{proof}[Proof of Theorem \ref{thm.HighDBoundedGapsGEN}]
By Proposition \ref{prop.HighDSmallT}, there is at most one value of $1\le i\le K$ with $u_i\in (-1/2,1/2)$. If such a value exists then it must be $K$.

For each $1\le i\le K-1$ let
\begin{equation}
	\vec{x}_i=\frac{\vec{v}_i}{|\vec{v}_i|}\in\SS_1^{d-1}.
\end{equation}
By Proposition \ref{prop.HighDSameCone} any points $\vec{x}_i$ and $\vec{x}_j$ with $i\not=j$ are separated by an angle greater than $\pi/3$. Therefore the collection of spheres of radius $1/2$ centered at the points $\vec{x}_i$, for $1\le i\le K-1$, do not overlap, and they are all tangent to the sphere of radius $1/2$ centered at the origin. It follows that
\begin{equation}
	K-1\le\sigma_d,
\end{equation}
and this completes the proof of Theorem \ref{thm.HighDBoundedGapsGEN}.
\end{proof}

\section{Continuity and local upper/lower bounds}\label{uplow}

We now turn to the case of general $\scrD\subseteq\SS_1^{d-1}$. We will show in this section that there are choices of $\scrD$ for which $F_\scrD(M,t)$ is unbounded. First we establish local upper bounds (i.e. upper bounds for $M$ on compacta) that hold for general $\scrD$.

\begin{prop}\label{prop:cont}
Suppose that $\scrD\subseteq\SS_1^{d-1}$ has non-empty interior and that $\scrC\subset\GamG\times(0,1)$ is compact. Then the following must hold:
\begin{enumerate}[{\rm (i)}]
	\item There exists a number $\kappa(\scrC)>0$ such that $F_\scrD(M,t) < \kappa(\scrC)$ if $(\Gamma M,t)\in \scrC$, and
	\item $F$ is continuous at every point $(\Gamma M,t) \in \scrC$ with
	\begin{equation}\label{exset}
	(\ZZ^{d+1} M\setminus\{0\})
	\cap \partial((-t,1-t) \times (0,\kappa(\scrC)]\scrD) = \emptyset .
	\end{equation}
\end{enumerate} 
\end{prop}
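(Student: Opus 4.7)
The plan is to prove (i) first by extracting a uniform version of the Minkowski construction from the proof of Proposition~\ref{WellProp}, and then deduce (ii) by a perturbation argument that uses (i) to localize the problem to finitely many lattice points.

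For (i), compactness of $\scrC$ provides $t_\ast>0$ with $t\in[t_\ast,1-t_\ast]$ for every $(\Gamma M,t)\in\scrC$, and Mahler's compactness criterion gives a uniform lower bound $c>0$ on the length of the shortest non-zero vector of $\ZZ^{d+1}M$ as $\Gamma M$ ranges over the compact projection of $\scrC$. Fix $\vec w\in\scrD^\circ$ and choose $\epsilon,\delta>0$ so that (a)~$\epsilon<\min\{t_\ast,c/2\}$; (b)~$c^2-\epsilon^2-\delta^2>\epsilon^2$; and (c)~$\{\vec v\in\RR^d:\vec v\cdot\vec w>\epsilon,\ |\vec v-(\vec v\cdot\vec w)\vec w|<\delta\}\subset\RR_{>0}\scrD$, which is possible because $\vec w\in\scrD^\circ$. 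Apply Minkowski's convex body theorem to the symmetric convex set $C_h:=\{(u,\vec v)\in\RR^{d+1}:|u|<\epsilon,\ |\vec v-(\vec v\cdot\vec w)\vec w|<\delta,\ |\vec v\cdot\vec w|<h\}$ with $h$ large enough that $\vol(C_h)>2^{d+1}$; this produces a non-zero $(u_0,\vec v_0)\in\ZZ^{d+1}M$ lying in $C_h$. The inequality $u_0^2+|\vec v_0|^2\geq c^2$ combined with (a) and (b) forces $(\vec v_0\cdot\vec w)^2>\epsilon^2$, so after replacing the vector by its negative if necessary $\vec v_0\cdot\vec w>\epsilon$, and (c) yields $\vec v_0\in\RR_{>0}\scrD$. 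Since $|u_0|<\epsilon\leq t_\ast\leq\min\{t,1-t\}$, we also get $(u_0,\vec v_0)\in\scrQ_\scrD(M,t)$, and taking $\kappa(\scrC)$ to be any constant exceeding $\sqrt{h^2+\delta^2}$ proves the uniform bound.

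For (ii), fix $(\Gamma M_0,t_0)\in\scrC$ satisfying the boundary condition, and set $\Lambda_0:=\ZZ^{d+1}M_0$ and $A_0:=(-t_0,1-t_0)\times(0,\kappa(\scrC)]\scrD$. The set $V_0:=(\Lambda_0\setminus\{\bn\})\cap\overline{A_0}$ is finite since $\overline{A_0}$ is bounded and $\Lambda_0$ is discrete, and the boundary hypothesis yields $V_0\subset A_0^\circ$. Choose $\rho>0$ small enough that every element of $V_0$ has Euclidean distance at least $\rho$ from $\partial A_0$, and every non-zero vector of $\Lambda_0$ of length at most $\kappa(\scrC)+1$ that is not in $V_0$ has distance at least $\rho$ from $\overline{A_0}$. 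For $(\Gamma M,t)$ close enough to $(\Gamma M_0,t_0)$, pick a representative $M$ close to $M_0$; then $\|\vec m M-\vec m M_0\|<\rho/2$ for the finitely many $\vec m\in\ZZ^{d+1}$ with $|\vec m M_0|\leq\kappa(\scrC)+1$, while $A_t$ is a translate of $A_0$ by $-(t-t_0)$ in the $u$-coordinate, so the Hausdorff distance between $A_t$ and $A_0$ is less than $\rho/2$. It follows that $\scrQ_\scrD(M,t)\cap\{|\vec v|\leq\kappa(\scrC)\}$ is a continuous perturbation of $V_0$ of the same cardinality, and since (i) gives $F_\scrD(M,t)<\kappa(\scrC)$, this minimum equals $F_\scrD(M,t)$ and varies continuously with $(M,t)$.

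The main obstacle is the bookkeeping in (ii): one must simultaneously rule out that any new lattice vector enters $A_t$ and that any element of $V_0$ leaves it under perturbation. The first is controlled by the $\rho$-separation of the ``exterior'' lattice vectors from $\overline{A_0}$, while the second is precisely the content of the boundary hypothesis, which forces $V_0\subset A_0^\circ$ and hence a positive margin from $\partial A_0$.
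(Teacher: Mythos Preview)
Your proof of (i) is correct and is exactly the uniformisation of the Minkowski argument from Proposition~\ref{WellProp} via Mahler's criterion that the paper sketches; your proof of (ii) supplies the perturbation argument that the paper merely outsources to \cite[Prop.~2]{HaynMar2020}, and the strategy is the right one.

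There is, however, one small gap in (ii). You control the displacement $\vec m M - \vec m M_0$ only for the finitely many $\vec m\in\ZZ^{d+1}$ with $|\vec m M_0|\leq\kappa(\scrC)+1$, and then assert that the perturbed lattice points in $A_t$ are in bijection with $V_0$. But you must also rule out that some $\vec m$ with $|\vec m M_0|>\kappa(\scrC)+1$ produces $\vec m M\in A_t$; for such $\vec m$ the additive bound $\|\vec m M-\vec m M_0\|<\rho/2$ need not hold, since this difference scales with $|\vec m|$. The fix is routine: write $M=M_0 G$ with $G$ close to the identity, so that $|\vec m M|\geq |\vec m M_0|\,\|G^{-1}\|_{\mathrm{op}}^{-1}$. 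Since every point of $\overline{A_t}$ has norm at most $\sqrt{1+\kappa(\scrC)^2}<\kappa(\scrC)+1$, taking $\|G^{-1}\|_{\mathrm{op}}$ sufficiently close to $1$ forces $|\vec m M|>\sqrt{1+\kappa(\scrC)^2}$ whenever $|\vec m M_0|>\kappa(\scrC)+1$, keeping all such vectors outside $\overline{A_t}$. With this addition the argument is complete.
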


We emphasise that in (ii), relation \eqref{exset} needs to be verified only for one specific but arbitrary $\kappa(\scrC)>F_\scrD(M,t)$.

\begin{proof} It follows from the proof of Proposition \ref{WellProp} above that $F(M,t)$ is finite, for any choice of $M$ and $t$. The actual bound obtained by that argument depends on the choice of $\epsilon$ in the proof. By Mahler's compactness criterion, since $\scrC$ is compact, it is possible to choose a single value of $\epsilon>0$ which works for every $(M,t)\in\scrC$. This implies the existence of a constant $\kappa(\scrC)$ satisfying the condition in (i).
	
The proof of (ii) is then analogous to the proof of \cite[Prop.~2]{HaynMar2020}.
\end{proof}

Let us now extend the above uniform upper bound to all $t\in(0,1)$, with $M$ in a compact set.
Given a bounded subset $\scrA\subset\RR^{d+1}$ with non-empty interior, and $M\in G$, we define the {\em covering radius} (also called {\em inhomogeneous minimum})
\begin{equation}
\rho(M,\scrA)=\inf\{ \theta>0 \mid \theta\scrA+\ZZ^{d+1} M = \RR^{d+1} \}.
\end{equation}
Because $\scrA$ has non-empty interior, $\rho(M,\scrA)<\infty$. We will in the following take $\scrA$ to be of the form $\scrA=(0,1)\times (0,r]\scrD$. Then $\scrA\subset \lambda \scrA$ for any $\lambda> 1$, and thus $\theta\scrA+\ZZ^{d+1} M = \RR^{d+1}$ for every $\theta>\rho(M,\scrA)$. Therefore, for such $\theta,$ the set $\theta\scrA + \vec x$ intersects $\ZZ^{d+1} M$ in at least one point, for every $\vec x\in\RR^{d+1}$. 

For a given set $\scrC\subset\GamG$, we define
\begin{equation}
\overline\rho(\scrC,\scrA) = \sup_{\Gamma M\in\scrC} \rho(M,\scrA).
\end{equation}
It is well known that $\overline\rho(\scrC,\scrA)<\infty$ for every compact $\scrC\subset\GamG$. This follows, for example, from the comments about finiteness and continuity of inhomogeneous minima of balls at the tops of pages 231 and 234 of \cite{GrubLekk1987} (an upper bound for $\overline\rho(\scrC,\scrA)$ is obtained from an upper bound with $\scrA$ replaced by a ball contained in $\scrA$). For $\theta>0$, set
\begin{equation}\label{D-def}
D(\theta) =
\begin{pmatrix}
\theta^d & 0 \\ 0 & \theta^{-1} \bm{1}_d
\end{pmatrix}
\in G.
\end{equation}

\begin{prop}\label{prop:twoB}
Let $\scrD\subseteq\SS_1^{d-1}$ with non-empty interior. Assume $\scrC\subset\GamG$ is compact, and $\theta>\overline\rho(\scrC,(0,1)\times(0,1]\scrD)$. Then
\begin{equation}\label{eqn.FUppBd}
F_\scrD(M,t) \leq \theta^{d+1}
\end{equation}
for $\Gamma M\in \scrC D(\theta)^{-1}$ and $t\in(0,1)$.
\end{prop}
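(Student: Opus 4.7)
The strategy is to transfer the problem via right-multiplication by $D(\theta)$ to the compact set $\scrC$, and then to invoke the covering property of $\scrA := (0,1) \times (0,1]\scrD$.

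First, because $\Gamma M \in \scrC D(\theta)^{-1}$, one may choose a representative so that $M' := MD(\theta) \in \scrC$, whence $\rho(M', \scrA) \leq \overline\rho(\scrC, \scrA) < \theta$. The map $(u, \vec v) \mapsto (u, \vec v) D(\theta) = (u\theta^d, \theta^{-1}\vec v)$ is a bijection from $\ZZ^{d+1} M$ to $\ZZ^{d+1} M'$. Under it, exhibiting any $(u, \vec v) \in \ZZ^{d+1} M$ with $u \in (-t, 1-t)$, $\vec v \in \RR_{>0}\scrD$, and $|\vec v| \leq \theta^{d+1}$ --- which would immediately yield $F_\scrD(M, t) \leq \theta^{d+1}$ --- is equivalent to finding $\vec w = (u', \vec v') \in \ZZ^{d+1} M'$ lying in the translated region
\[
R \;:=\; \theta^d \scrA + (-t\theta^d, \vec 0) \;=\; (-t\theta^d, (1-t)\theta^d) \times (0, \theta^d]\scrD .
\]

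To produce such a lattice point, I would apply the covering property: since $\scrA$ is star-shaped from the origin, $\lambda\scrA + \ZZ^{d+1} M' = \RR^{d+1}$ holds for every $\lambda > \rho(M', \scrA)$, and in particular for $\lambda = \theta^d$ (in the relevant regime $\theta \geq 1$ this is automatic from $\theta^d \geq \theta > \overline\rho$). Because $\ZZ^{d+1} M'$ is invariant under negation, the same identity holds with $\scrA$ replaced by $-\scrA$. Applying the covering by $\theta^d(-\scrA)$ at the specific point $\vec y := (-t\theta^d, \vec 0)$, there exist $\vec a \in \theta^d(-\scrA)$ and $\vec w \in \ZZ^{d+1} M'$ with $\vec y = \vec a + \vec w$; hence $\vec w = \vec y - \vec a \in \vec y + \theta^d \scrA = R$, as desired.

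Pulling back, the vector $(u, \vec v) := \vec w D(\theta)^{-1} = (u'\theta^{-d}, \theta \vec v')$ lies in $\ZZ^{d+1} M$ and satisfies $u \in (-t, 1-t)$ together with $\vec v \in (0, \theta^{d+1}]\scrD$. Therefore $(u, \vec v) \in \scrQ_\scrD(M, t)$ and $F_\scrD(M, t) \leq |\vec v| \leq \theta^{d+1}$. The main delicacy is that $\scrA$ is oriented --- the $\scrD$-cone lies only in the positive direction --- so a naive application of the covering by $\scrA$ would place lattice points in the \emph{reflected} translate of $R$ rather than in $R$ itself; the passage to $-\scrA$, permissible because $-\ZZ^{d+1} M' = \ZZ^{d+1} M'$, restores the correct orientation.
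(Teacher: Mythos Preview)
Your argument is correct and follows essentially the same route as the paper's: transfer by $D(\theta)$ to the compact set $\scrC$ and then invoke the covering property of $\scrA=(0,1)\times(0,1]\scrD$ to produce a lattice point in $\theta^d\scrA_{t,1}$. You are in fact more explicit than the paper on two points---the need to pass through $-\scrA$ via the central symmetry of the lattice, and the implicit use of $\theta\ge 1$ to get from scale $\theta$ to scale $\theta^d$---both of which the paper's proof glosses over.
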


\begin{proof}
For each $t\in (0,1)$, the set
\begin{equation}
	\scrA_{t,\theta}=(-t,1-t)\times (0,\theta^{d+1}] \scrD\subset\RR^{d+1}
\end{equation} has non-empty interior. For any $\Gamma M\in\scrC D(\theta)^{-1}$ and $t\in(0,1)$, it follows from the definition of the function $F_\scrD$ that, if $\scrA_{t,\theta}$ intersects $\ZZ^{d+1} M$ in at least one point, then \eqref{eqn.FUppBd} holds. Now $\scrA_{t,\theta}\cap\ZZ^{d+1} M\neq \emptyset$ is equivalent to $\theta^d \scrA_{t,1}\cap\ZZ^{d+1} M D(\theta) \neq \emptyset$. The latter holds because the assumption that $\theta>\overline\rho(\scrC,(0,1)\times(0,1]\scrD)$ implies that $\theta\scrA_{t,1}\cap\ZZ^{d+1} M' \neq \emptyset$ for every $\Gamma M'\in\scrC$. Now choose $M'=M D(\theta)$ and note that $\Gamma M D(\theta) \in\scrC$ by assumption.
\end{proof}

As a consequence of the previous proposition, we have the following result.

\begin{prop}\label{prop:three}
Let $\scrD\subseteq\SS_1^{d-1}$ have non-empty interior. Assume $\scrC\subset\GamG$ is compact, and $\theta>\overline\rho(\scrC,(0,1)\times(0,1]\scrD)$. Then there is a constant $C_{\theta,\scrD}<\infty$ such that
\begin{equation}
\scrG_\scrD(M) \leq C_{\theta,\scrD}
\end{equation}
for $\Gamma M\in\scrC D(\theta)^{-1}$.
\end{prop}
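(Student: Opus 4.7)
The plan is to combine the uniform upper bound from Proposition \ref{prop:twoB} with a standard uniform discreteness argument for lattices in a compact subset of $\GamG$. Concretely, every value attained by $F_\scrD(M,\cdot)$ is realised as $|\vec v|$ for some element $(u,\vec v)\in\scrQ_\scrD(M)$; by Proposition \ref{prop:twoB}, whenever $\Gamma M\in\scrC D(\theta)^{-1}$ every such value lies in $(0,\theta^{d+1}]$. Therefore
\begin{equation}
\scrG_\scrD(M)\le \big|\big\{(u,\vec v)\in\ZZ^{d+1}M\,:\,|u|<1,\;\vec v\in\RR_{>0}\scrD,\;|\vec v|\le \theta^{d+1}\big\}\big|,
\end{equation}
and it suffices to bound the right-hand side uniformly in $M$. (Strictly speaking one should identify distinct lengths with distinct points, but if two lattice points in the counted set have the same length this only makes the count larger.)

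Next I would observe that the set $\scrC D(\theta)^{-1}\subset\GamG$ is compact, being the image of the compact set $\scrC$ under right multiplication by the fixed element $D(\theta)^{-1}\in G$. By Mahler's compactness criterion there exists $r>0$ such that every lattice $\ZZ^{d+1}M$ with $\Gamma M\in \scrC D(\theta)^{-1}$ has shortest non-zero vector of length at least $r$. Consequently, the number of points of $\ZZ^{d+1}M$ in any fixed bounded set $B\subset\RR^{d+1}$ is bounded by a constant depending only on $B$ and $r$: pack disjoint open balls of radius $r/2$ around each lattice point and compare volumes with an $r/2$-neighbourhood of $B$.

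Applying this with $B=\{(u,\vec v)\in\RR^{d+1}:|u|<1,\,|\vec v|\le\theta^{d+1}\}$ produces a finite constant $C_{\theta,\scrD}$ (depending only on $\theta$, $\scrD$ and the compact set $\scrC$) that bounds $\scrG_\scrD(M)$ for every $\Gamma M\in\scrC D(\theta)^{-1}$, which is the desired conclusion. There is no real obstacle here: the whole content of the proposition is that one converts the uniform bound on $F_\scrD$ from Proposition \ref{prop:twoB} into a counting statement, and the uniform lower bound on the systole supplied by Mahler's criterion does the rest.
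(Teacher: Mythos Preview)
Your proof is correct and follows essentially the same route as the paper: reduce $\scrG_\scrD(M)$ to a lattice-point count in the bounded region $(-1,1)\times[0,\theta^{d+1}]\scrD$ via Proposition~\ref{prop:twoB}, then invoke Mahler's criterion on the compact set $\scrC D(\theta)^{-1}$ to bound that count uniformly. The only difference is cosmetic---you spell out the ball-packing argument explicitly, whereas the paper simply asserts uniform boundedness from Mahler's criterion.
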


\begin{proof}
It follows from Proposition \ref{prop:twoB}, together with the definition of the function $\scrG_\scrD$, that for any $\Gamma M\in\scrC D(\theta)^{-1}$, the quantity $\scrG_\scrD(M)$ is bounded above by the number of lattice points in $\ZZ^{d+1} M$ which lie in the region
\begin{equation}
	(-1,1) \times [0,\theta^{d+1}]\scrD\subseteq\R^{d+1}.
\end{equation}
In view of Mahler's criterion, the number of such lattice points is uniformly bounded above for all $\Gamma M$ in the compact subset $\scrC D(\theta)^{-1}$ of $\GamG$.
\end{proof}

The key point is now that, in addition to the above upper bounds, we can find open sets $\scrU\subset\GamG$, on which $\scrG_{\scrD,N}(M)$ can exceed any given value. This requires however that $\scrD$ is contained in a hemisphere.
Let $\scrH\subset\SS_1^{d-1}$ be an (arbitrary) open hemisphere, and $\scrD\subset\SS_1^{d-1}$ with non-empty interior so that $\scrD^\cl\subset\scrH$.
Choose $d+1$ row vectors $\vec e_0,\vec e_1,\ldots,\vec e_d\in\SS_1^{d-1}$ with the properties 
\begin{enumerate}[(i)]
\item $\vec e_1\in\scrD^\circ$ and $\vec e_0\in\scrH\setminus\scrD$ such that $\vec e_0\cdot \vec e_1>0$;
\item $\vec e_2,\ldots,\vec e_d\notin\scrD$ such that $(\vec e_1,\ldots,\vec e_d)$ forms an orthonormal basis of $\RR^d$ with
$$\det\begin{pmatrix} \vec e_1\\ \vdots \\ \vec e_d\end{pmatrix}=1.$$
\end{enumerate}
Given $\epsilon>0$, define the matrix
\begin{equation}
M_\epsilon = \begin{pmatrix} \epsilon & -\epsilon \vec e_0 \\ 0 & \vec e_1 \\ 0 & \epsilon^{-1/(d-1)}\vec e_2 \\ \vdots & \vdots \\ 0 & \epsilon^{-1/(d-1)}\vec e_d\end{pmatrix} .
\end{equation}
Note that $\det M_\epsilon = 1$, and thus the vectors
$$
\vec b_0=(\epsilon,-\epsilon \vec e_0),\; \vec b_1=(0,\vec e_1),\; \vec b_2=(0,\epsilon^{-1/(d-1)}\vec e_2),\; \ldots,\; \vec b_d=(0,\epsilon^{-1/(d-1)}\vec e_d)
$$
form a basis of the unimodular lattice $\scrL_\epsilon=\ZZ^{d+1} M_\epsilon$.

Let $L(x)$ denote the largest integer strictly less than $x$. That is, in terms of the floor function $L(x)=\lfloor x\rfloor$ if $x\notin\ZZ$ and $L(x)=x-1$ if $x\in\ZZ$.

\begin{prop}\label{prop:unbddNEW}
Let $\scrD\subset\scrH$ and $M_\epsilon$ be as above. Then there exist $\lambda\in(0,1)$ and $\epsilon_0>0$ such that, for any $\epsilon\in(0,\epsilon_0]$ and $t\in(\lambda, 1)$, 
\begin{enumerate}[{\rm (i)}]
\item $F_\scrD(M_\epsilon,t) = |\vec e_1-\epsilon L(\epsilon^{-1}(1-t)) \vec e_0|$ and
\item $F_\scrD$ is continuous at $(\Gamma M_\epsilon,t)\in\GamG\times (0,1)$ if $t\notin 1+\epsilon \ZZ$.
\end{enumerate}
\end{prop}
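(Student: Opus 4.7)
The plan for (i) is to compute $F_\scrD(M_\epsilon,t)$ by direct enumeration of short lattice vectors in $\scrL_\epsilon$. A generic point reads
\begin{equation}
(u,\vec v)=\big(n_0\epsilon,\,n_1\vec e_1-n_0\epsilon\vec e_0+\epsilon^{-1/(d-1)}\textstyle\sum_{j=2}^d n_j\vec e_j\big).
\end{equation}
Because $\scrD^\cl$ is compact and disjoint from the directions $\pm\vec e_2,\ldots,\pm\vec e_d$, there is a positive angular gap; for $\epsilon$ below a threshold $\epsilon_1$, any lattice vector with some $n_j\neq 0$ ($j\geq 2$) either has $|\vec v|$ of order $\epsilon^{-1/(d-1)}$ or has direction outside $\scrD^\cl$. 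The minimum defining $F_\scrD$ is therefore realised on the planar sublattice $\ZZ\vec b_0+\ZZ\vec b_1$, on which $\vec v=n_1\vec e_1-n_0\epsilon\vec e_0$.

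Set $c_1:=\vec e_0\cdot\vec e_1\in(0,1)$ and $\vec v^\star:=\vec e_1-k^*\epsilon\vec e_0$ with $k^*:=L(\epsilon^{-1}(1-t))$. I split by $n_1$. The case $n_1=0$ is excluded because $\pm\vec e_0\notin\scrD$ ($\vec e_0\notin\scrD$ by hypothesis, and $-\vec e_0\notin\scrH\supseteq\scrD$ since $\scrH$ is a hemisphere). For $|n_1|\ge 2$, the projection $|\vec v|\ge|n_1-n_0\epsilon c_1|>1$ combined with $|\vec v^\star|^2=(k^*\epsilon-c_1)^2+(1-c_1^2)<1$ (which holds once $\lambda>1-2c_1$, forcing $k^*\epsilon<2c_1$) rules it out. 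For $n_1=1$, the parabola $|\vec v|^2=(n_0\epsilon-c_1)^2+(1-c_1^2)$ is decreasing on $(-\infty,c_1]$; choosing $\lambda>1-c_1$ forces $n_0\epsilon<1-t<c_1$, so the constrained minimum over integer $n_0$ with $n_0\epsilon<1-t$ is attained at $n_0=k^*$ and equals $|\vec v^\star|$. For $\lambda$ close to $1$ and $\epsilon_0$ small, $\vec v^\star$ lies in the open cone $\RR_{>0}\scrD$ by continuity at $\vec e_1\in\scrD^\circ$.

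The delicate case $n_1=-1$ is handled by a careful selection of $\vec e_0$. Let $\vec n$ denote the pole of $\scrH$; compactness yields $\delta>0$ with $\scrD^\cl\subseteq\{\vec x\cdot\vec n\geq\delta\}$. I take $\vec e_0:=c'\vec n+\sqrt{1-c'^2}\,\vec w\in\SS_1^{d-1}$ with $c'\in(0,\delta)$ and $\vec w\in\vec n^\perp\cap\SS_1^{d-1}$ chosen so that $\vec w\cdot\vec e_1>0$. Then $\vec e_0\in\scrH\setminus\scrD^\cl$ (since $\vec e_0\cdot\vec n=c'<\delta$), $\vec e_0\cdot\vec e_1>0$, and $\vec e_0\cdot\vec n<\vec e_1\cdot\vec n$ (because $\vec e_1\cdot\vec n\geq\delta>c'$). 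For any $n_0\epsilon\in(-t,1-t)\subseteq(-1,1)$,
\begin{equation}
(-\vec e_1-n_0\epsilon\vec e_0)\cdot\vec n=-\vec e_1\cdot\vec n-n_0\epsilon(\vec e_0\cdot\vec n)<-\vec e_1\cdot\vec n+\vec e_0\cdot\vec n<0,
\end{equation}
so the direction of $-\vec e_1-n_0\epsilon\vec e_0$ lies in $-\scrH$, hence outside $\scrD\subseteq\scrH$. This rules out $n_1=-1$ and completes (i).

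For (ii) I invoke Proposition~\ref{prop:cont}(ii). Fix a small compact neighbourhood $\scrC$ of $(\Gamma M_\epsilon,t)$ with $\kappa(\scrC)$ chosen strictly between $|\vec v^\star|$ and $\sqrt{1+2\lambda c_1+\lambda^2}$ (the norm of the shortest $n_1=1$, $u=-t$ candidate $\vec e_1+t\vec e_0$), and generically enough that no lattice point of $\scrL_\epsilon$ lies on the shell $|\vec v|=\kappa(\scrC)$ or on $\RR_{>0}\partial\scrD$. Then $\partial\big((-t,1-t)\times(0,\kappa(\scrC)]\scrD\big)$ avoids $\ZZ^{d+1}M_\epsilon\setminus\{0\}$: the hyperplane $u=1-t$ requires $\epsilon^{-1}(1-t)\in\ZZ$, excluded by $t\notin 1+\epsilon\ZZ$; the hyperplane $u=-t$ is outside the $\vec v$-ball of radius $\kappa(\scrC)$ by the norm choice (using paragraphs 1--3 to bound the shortest lattice vector with $u=-t$ and direction in $\scrD$); the remaining faces by genericity of $\kappa(\scrC)$. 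Continuity at $(\Gamma M_\epsilon,t)$ follows. The main obstacle is the geometric construction in paragraph three---tuning $\vec e_0$ so the $\vec n$-projection eliminates the $n_1=-1$ case; once that is arranged, the remaining cases reduce to convex analysis of a one-dimensional parabola and routine Minkowski-style bounds.
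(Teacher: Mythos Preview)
Your overall scheme---enumerate lattice points of $\scrL_\epsilon$, discard those with $n_j\neq 0$ for $j\geq 2$ by size, and reduce to a one-parameter family $\vec v=n_1\vec e_1-n_0\epsilon\vec e_0$---is exactly the paper's approach. The case split $n_1=0$, $|n_1|\geq 2$, $n_1=1$ is handled in the same spirit (the paper treats $m_1\geq 1$ together via $|m_1\vec e_1-\epsilon m_0\vec e_0|=m_1|\vec e_1-(\epsilon m_0/m_1)\vec e_0|$ and monotonicity, but this is equivalent to your parabola argument). The difference, and the gap, is in the case $n_1\leq -1$.

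\textbf{Main gap in (i).} The proposition is stated for $M_\epsilon$ ``as above'', meaning $\vec e_0,\ldots,\vec e_d$ are already fixed subject only to conditions (i)--(ii) preceding the proposition. You are not free to redefine $\vec e_0$ inside the proof. Your paragraph three imposes the extra condition $\vec e_0\cdot\vec n<\delta\leq\vec e_1\cdot\vec n$, which is not implied by (i)--(ii) (for instance one could have taken $\vec e_0$ near the pole $\vec n$, so that $\vec e_0\cdot\vec n>\vec e_1\cdot\vec n$). Thus you prove only the weaker statement that \emph{some} admissible choice of $\vec e_0$ yields the conclusion. That weaker statement happens to suffice for the downstream application (Proposition~\ref{lowgapprop}), but it is not what the proposition asserts. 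The paper instead argues that $m_1\vec e_1-\epsilon m_0\vec e_0\notin\RR_{>0}\scrD$ for all $m_1\leq 0$ directly from $\vec e_0\in\scrH\setminus\scrD$ and $\vec e_1\in\scrD^\circ$, without further restricting $\vec e_0$.

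\textbf{Gap in (ii).} Your treatment of the boundary piece $(0,\kappa]\partial\scrD$ is not correct: whether a lattice point lies on $\RR_{>0}\partial\scrD$ is a property of the lattice and $\scrD$, not of $\kappa$, so ``genericity of $\kappa(\scrC)$'' does not help there. The paper handles this face by showing that the only relevant lattice vectors have $m_0,m_1\geq 1$ and $\epsilon m_0/m_1\in(0,1-t)\subset(s_-,s_+)$, whence the direction lies in $\scrD^\circ$ (strictly), not on $\partial\scrD$. You would need an analogous argument. Your handling of the face $u=-t$ via the lower bound $|\vec e_1+t\vec e_0|$ is reasonable, but note it again relies on your special choice of $\vec e_0$ to dispose of the $n_1=-1$ contribution; the paper instead rules out $u=-t$ (for $m_0\geq 1$) simply because $\epsilon m_0>0>-t$.
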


\begin{proof} Fix $s_0\in (0,\vec e_0\cdot \vec e_1)$, and denote by $s_-\leq 0\leq s_+$ the infimum and supremum of all $s$ such that $\vec e_1-s \vec e_0\in\RR_{>0}\scrD$; since $e_1\in\scrD^\circ$ we have $s_-< 0 < s_+$. Note also that
	\begin{equation}\label{elan}
		\frac{d}{ds} |\vec e_1-s \vec e_0| = \frac{s -  \vec e_0\cdot \vec e_1}{|\vec e_1-s \vec e_0|}< 0
	\end{equation}
	if $s\leq s_0$.
	
Proof of (i): We are interested in the lattice points from $\ZZ^{d+1} M_\epsilon$ contributing to \eqref{QMt}, i.e.,
\begin{equation} \label{QMteps}
\scrQ_\scrD(M_\epsilon,t) = \big\{ (u,\vec v)\in\ZZ^{d+1} M_\epsilon\;\big| \; -t< u < 1-t, \;\vec v \in \RR_{>0}\scrD \big\} ,
\end{equation}
and in particular those with minimal $|\vec v|$. We begin with those elements of the form 
\begin{equation}\label{tform}
(u,\vec v) = m_0 \vec b_0 + m_1\vec b_1=(\epsilon m_0 ,m_1 \vec e_1-\epsilon m_0 \vec e_0),\qquad m_0\in\ZZ,\; m_1\in\ZZ_{\geq 1}.
\end{equation}
By construction, we have $m_1 \vec e_1-\epsilon m_0 \vec e_0\in\RR_{>0}\scrD^\circ$ if and only if $s_-<\epsilon m_0/m_1 < s_+$; and $m_1 \vec e_1-\epsilon m_0 \vec e_0\in\RR_{>0}\scrD^\cl$ if and only if $s_-\le\epsilon m_0/m_1 \leq s_+$.
In view of \eqref{elan}, the length $|\vec e_1-\epsilon m_0/m_1 \vec e_0|$ is strictly decreasing (as a function of $m_0/m_1$) if $\epsilon m_0/m_1\leq s_0$. Let us restrict our attention to those $t$ for which $1-t< \min\{s_0,s_+\}$. Then the condition $\epsilon m_0< 1-t$ implies $\epsilon m_0/m_1 < s_+$ and $\epsilon m_0/m_1\leq s_0$ for all $m_1\geq 1$. 
Hence the smallest value of $|m_1 \vec e_1-\epsilon m_0 \vec e_0|$ is obtained for $m_0=L(\epsilon^{-1}(1-t))$ and $m_1=1$.
In summary, we have shown thus far that for $t> 1- \min\{s_0,s_+\}$,
\begin{equation}
F_\scrD(M_\epsilon,t)\leq |\vec e_1-\epsilon L(\epsilon^{-1}(1-t)) \vec e_0| .
\end{equation}
Note that 
\begin{equation}
|\vec e_1-s \vec e_0| = \sqrt{ 1 - 2s\, \vec e_0\cdot \vec e_1 +s^2} < 1
\end{equation}
for $0<s\leq s_0$ and $\vec e_0\cdot \vec e_1>s_0$, and hence $F_\scrD(M_\epsilon,t)<1$.

What we need to establish now is that all other elements in $\scrQ_\scrD(M_\epsilon,t)$ that are not of the form \eqref{tform} have larger $|\vec v|$.
Consider first the set of vectors
\begin{equation}\label{tform1}
(u,\vec v) = m_0 \vec b_0 + m_1\vec b_1=(\epsilon m_0 ,m_1\vec e_1-\epsilon m_0 \vec e_0),\qquad m_0\in\ZZ,\; m_1\in\ZZ_{\leq 0},
\end{equation}
Since $\vec e_0\in\scrH\setminus\scrD$ and $\vec e_1\in\scrD^\circ$ we have $m_1\vec e_1-\epsilon m_0 \vec e_0\notin\RR_{>0}\scrD$ for all $m_1\leq 0$, and hence the corresponding vectors are not in $\scrQ_\scrD(M_\epsilon,t)$. 

Next consider the remaining cases
\begin{equation}\label{tform2}
(u,\vec v) = m_0 \vec b_0 + \cdots + m_d\vec b_d,\qquad m_0,m_1\in\ZZ,\; (m_2,\ldots,m_d)\in\ZZ^{d-1}\setminus\{0\} .
\end{equation}
We need to understand whether any of these vectors can lie in $\scrQ_\scrD(M_\epsilon,t)$ and satisfy $|\vec v|< 1$.
We make the following observations:
\begin{itemize}
	\item[(a)] The domain $(-1,1)\times (0,1)\scrD$ is bounded.
	\item[(b)] The vector $m_0 \vec b_0$ has bounded length $|m_0 \vec b_0|=\sqrt2\,  |\epsilon m_0|<\sqrt 2$ (since $-1<\epsilon m_0<1$).
	\item[(c)] The vector $m_1 \vec b_1 + \cdots + m_d\vec b_d$ has length at least $\epsilon^{-1/(d-1)}$ since $(m_2,\ldots,m_d)\in\ZZ^{d-1}\setminus\{0\}$.
\end{itemize}
Therefore, as long as $\epsilon$ is sufficiently small, $(-1,1)\times (0,1)\scrD$ will not contain any vectors of the form \eqref{tform2}. Since we have now considered all vectors for our restricted values of $t$, this establishes claim (i) with $\lambda=1-\min\{s_0,s_+\}$.

Proof of (ii): We need to establish that, for $\lambda<t<1$, $t\notin 1+\epsilon \ZZ$, the function $F_\scrD$ is continuous at $(\Gamma M_\epsilon,t)$. 
By Proposition \ref{prop:cont} (with $\scrC=\{(\Gamma M_\epsilon,t)\}$), it is sufficient to check that
\begin{equation}\label{exset2}
(\ZZ^{d+1} M_\epsilon\setminus\{0\})
\cap \partial((-t,1-t) \times (0,\kappa]\scrD) = \emptyset ,
\end{equation}
for any fixed choice of $\kappa>F_\scrD(\Gamma M_\epsilon,t)$. We fix $\kappa$ so that $F_\scrD(\Gamma M_\epsilon,t)<\kappa<1$ and
\begin{equation}
\kappa\notin \{ |m_1\vec e_1-\epsilon m_0 \vec e_0| \mid (m_0,m_1)\in\ZZ_{\geq 1}^2 \}.
\end{equation}

Since vectors with $(m_2,\ldots,m_d)\in\ZZ^{d-1}\setminus\{0\}$ are outside the bounded domain $((-t,1-t) \times (0,\kappa]\scrD)^\cl$ for $\epsilon$ sufficiently small (by the argument in the proof of fact (i) above), what we are aiming to show is equivalent to
\begin{equation}\label{libel0}
\{ (\epsilon m_0 ,m_1\vec e_1-\epsilon m_0 \vec e_0) \mid (m_0,m_1)\in(\ZZ^2\setminus\{0\}) \cap \partial((-t,1-t) \times (0,\kappa]\scrD)\} = \emptyset .
\end{equation}
By the same argument as above, for $m_1\leq 0$ we have $m_1\vec e_1-\epsilon m_0 \vec e_0\notin \RR_{\geq 0}\scrD^\cl$ (unless $(m_0,m_1)=0$, which is excluded). So we can assume $m_1\geq 1$ from now on. Then, for $m_0\leq 0$, we have by the monotonicity \eqref{elan} that
$|m_1\vec e_1-\epsilon m_0 \vec e_0|\geq m_1 |\vec e_1|\geq |\vec e_1| = 1$.
Therefore $m_1\vec e_1-\epsilon m_0 \vec e_0\notin [0,\kappa]\scrD^\cl$.
What remains is to check that
\begin{equation}\label{libel01}
\{ (\epsilon m_0 ,m_1\vec e_1-\epsilon m_0 \vec e_0) \mid (m_0,m_1)\in\ZZ_{\geq 1}^2 \cap \partial((-t,1-t) \times (0,\kappa]\scrD)\} = \emptyset .
\end{equation}
The truth of relation \eqref{libel01} is equivalent to the truth of both
\begin{equation}\label{libel1}
\{ (\epsilon m_0 ,m_1\vec e_1-\epsilon m_0 \vec e_0) \mid (m_0,m_1)\in\ZZ_{\geq 1}^2 \cap (\{-t,1-t\} \times [0,\kappa]\scrD^\cl)\} = \emptyset 
\end{equation}
and
\begin{equation}\label{libel2}
\{ (\epsilon m_0 ,m_1\vec e_1-\epsilon m_0 \vec e_0) \mid (m_0,m_1)\in\ZZ_{\geq 1}^2 \cap ([-t,1-t] \times \partial((0,\kappa]\scrD)))\}= \emptyset .
\end{equation}
The first relation \eqref{libel1} is automatically satisfied since (a) by assumption $\epsilon m_0\neq 1-t$ for any integer $m_0$, and (b) $\epsilon m_0\neq -t$ because $t>0$ and $m_0>0$. 

As to the second relation \eqref{libel2}, the statement $m_1\vec e_1-\epsilon m_0 \vec e_0\in \partial((0,\kappa]\scrD)$  
implies $s_-= \epsilon m_0/m_1$ or  $s_+= \epsilon m_0/m_1$ or $|m_1\vec e_1-\epsilon m_0 \vec e_0|=\kappa$. The first option is not possible since $s_-<0$, and third is excluded by assumption. If the second option holds, then, since $\epsilon m_0\in[-t,1-t]$, we have $m_1 s_+\leq 1-t$ and thus $s_+\leq 1-t$. But this contradicts our assumption $1-t<s_+$. Hence \eqref{libel2} holds and the proof is complete.
\end{proof}

The following lower bound on the number of distinct values of $t\mapsto F_\scrD(M,t)$ is a corollary of the previous proposition.

\begin{prop}\label{lowgapprop}
Let $\scrH\subset\SS_1^{d-1}$ be an arbitrary open hemisphere, and suppose that $\scrD\subset\SS_1^{d-1}$ has non-empty interior and satisfies $\scrD^\cl\subset\scrH$. Then there is a constant $c_\scrD>0$ such that, for any $\epsilon>0$, there exists an open subset $\scrU_\epsilon\subset\GamG$ and integer $N_\epsilon$ with the property that, for all $\Gamma M\in\scrU_\epsilon$ and $N\geq N_\epsilon$,
\begin{equation}
\scrG_{\scrD,N}(M) \geq c_\scrD \epsilon^{-1}.
\end{equation}
\end{prop}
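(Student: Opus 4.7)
The plan is to exploit Proposition~\ref{prop:unbddNEW}: at the specific lattice $\scrL_\epsilon=\ZZ^{d+1}M_\epsilon$, the function $t\mapsto F_\scrD(M_\epsilon,t)$ already takes of order $\epsilon^{-1}$ distinct values on $(\lambda,1)$. I will first extend this count to a neighborhood $\scrU_\epsilon$ of $\Gamma M_\epsilon$ using the continuity statement in part (ii), and then transfer it to the discretized quantity $\scrG_{\scrD,N}(M)$ by choosing $N$ large enough to sample every constancy plateau.

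Fix $\epsilon\in(0,\epsilon_0]$. By part (i), $L(\epsilon^{-1}(1-t))$ equals the integer $m_0$ precisely on $J_{m_0}:=[1-\epsilon(m_0+1),\,1-\epsilon m_0)$, and with $K_\epsilon:=\lfloor(1-\lambda)/\epsilon\rfloor-1$ the plateaus $J_0,\ldots,J_{K_\epsilon-1}$ all lie inside $(\lambda,1)$. Since the proof of Proposition~\ref{prop:unbddNEW} takes $\lambda=1-\min\{s_0,s_+\}$ with $s_0<\vec e_0\cdot\vec e_1$, the strict monotonicity \eqref{elan} shows that the values $|\vec e_1-\epsilon m_0\vec e_0|$ for $m_0=0,\ldots,K_\epsilon-1$ are pairwise distinct; let $2\delta_\epsilon>0$ denote the smallest gap between consecutive such values.

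For each $m_0$ pick an interior point $t^{(m_0)}\in J_{m_0}\setminus(1+\epsilon\ZZ)$ (say the midpoint of $J_{m_0}$ when it avoids $1+\epsilon\ZZ$). By part (ii) of Proposition~\ref{prop:unbddNEW}, $F_\scrD$ is continuous at $(\Gamma M_\epsilon,t^{(m_0)})$, so there is an open product neighborhood $\scrV_{m_0}\times I_{m_0}\subset\GamG\times(0,1)$ of that point on which $\bigl|F_\scrD(M,t)-|\vec e_1-\epsilon m_0\vec e_0|\bigr|<\delta_\epsilon$; we may shrink each $I_{m_0}$ to lie in the middle third of $J_{m_0}$. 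Set $\scrU_\epsilon:=\bigcap_{m_0=0}^{K_\epsilon-1}\scrV_{m_0}$, an open neighborhood of $\Gamma M_\epsilon$. For any $\Gamma M\in\scrU_\epsilon$, the $K_\epsilon$ sets $\{F_\scrD(M,t):t\in I_{m_0}\}$ lie in disjoint $\delta_\epsilon$-intervals around the distinct points $|\vec e_1-\epsilon m_0\vec e_0|$.

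Finally, choose $N_\epsilon$ so that $1/(N_\epsilon+\tfrac12)$ is smaller than both the minimum length of the $I_{m_0}$ and the distance separating every $I_{m_0}$ from $\{0,1\}$. For every $N\geq N_\epsilon$ and every $m_0$, the interval $I_{m_0}$ contains at least one $n/N_+$ with $1\leq n\leq N$, so the values $F_\scrD(M,n/N_+)$ for $\Gamma M\in\scrU_\epsilon$ land in $K_\epsilon$ disjoint intervals, yielding $\scrG_{\scrD,N}(M)\geq K_\epsilon\geq (1-\lambda)/(2\epsilon)$ whenever $\epsilon\leq(1-\lambda)/4$. Setting $c_\scrD:=\tfrac14\min\{1-\lambda,\,\epsilon_0\}$, the estimate $\scrG_{\scrD,N}(M)\geq c_\scrD\epsilon^{-1}$ follows from this construction for small $\epsilon$, and is trivial (take $\scrU_\epsilon=\GamG$, $N_\epsilon=1$) for larger $\epsilon$ since then $c_\scrD\epsilon^{-1}<1\leq \scrG_{\scrD,N}(M)$. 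The main obstacle is that the gaps $\delta_\epsilon$ between plateau values as well as the plateau lengths shrink linearly with $\epsilon$, forcing both $\scrU_\epsilon$ and $N_\epsilon$ to depend on $\epsilon$ — but the statement of the proposition accommodates this dependence.
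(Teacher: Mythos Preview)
Your proof is correct and follows essentially the same strategy as the paper: use Proposition~\ref{prop:unbddNEW}(i) to produce order $\epsilon^{-1}$ distinct plateau values at $M_\epsilon$, invoke the continuity from Proposition~\ref{prop:unbddNEW}(ii) at one representative point per plateau, intersect the finitely many resulting neighborhoods to obtain $\scrU_\epsilon$, and then take $N$ large enough that every plateau contains a sample point $n/N_+$. In fact you are more careful than the paper on a couple of points---you explicitly take the finite intersection of neighborhoods (the paper's phrasing suggests a uniform $\eta_\epsilon$ for all $t\notin1+\epsilon\ZZ$, which is more than pointwise continuity directly gives), you use the minimum gap between plateau values (the paper writes $\max$, which appears to be a slip), and you handle the large-$\epsilon$ regime explicitly.
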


\begin{proof}
Proposition \ref{prop:unbddNEW} (i) shows that $\scrG_{\scrD}(M_\epsilon) \geq c_\scrD \epsilon^{-1}$ for a sufficiently small $c_\scrD>0$. 
Denote the distinct elements of the set $\{ F_\scrD(M_\epsilon,t) \mid t\in (\lambda,1)\}$ by $0<\varphi_1<\cdots<\varphi_{\scrG_\scrD(M_\epsilon)}<1$. Let $\delta=\max_i (\varphi_{i+1}-\varphi_i)$. 
Then by the continuity established in Proposition \ref{prop:unbddNEW} (ii), there exists a neighbourhood $\scrU_\epsilon\subset\GamG$ of $\Gamma M_\epsilon$ and an $\eta_\epsilon>0$ such that for $t\notin 1+\epsilon\ZZ$, we have that
\begin{equation}
|F_\scrD(M,t')-F_\scrD(M_\epsilon,t)|<\delta/2,
\end{equation} whenever $\Gamma M\in\scrU_\epsilon$ and $t'\in (t-\eta_\epsilon,t+\eta_\epsilon)$. For $N\geq \eta_\epsilon^{-1}$ we can find an integer $n$ so that $\frac{n}{N_+} \in (t-\eta_\epsilon,t+\eta_\epsilon)$. This implies $\scrG_{\scrD,N}(M)\geq \scrG_{\scrD}(M_\epsilon) \geq c_\scrD \epsilon^{-1}$ for all $\Gamma M\in\scrU_\epsilon$ and $N\geq \eta_\epsilon^{-1}$. 
\end{proof}

\begin{proof}[Proof of Theorem \ref{thm:one}]
The proof of Theorem \ref{thm:one} now follows from the same argument as the proof of \cite[Theorem 1]{HaynMar2020}. For $\vec\alpha\in P$, we have that the set $\{ \Gamma A_{N_i+\frac12} (\vec\alpha) \mid i \in\NN\}$ is dense in $\GamG$; see Section 2 of \cite{HaynMar2020} for details. The claim on the limit inferior then follows from Proposition \ref{prop:twoB}, since by density we have infinite returns to a given compact subset. The claim on the limit superior follows from Proposition \ref{lowgapprop}, since by density the above set intersects any given open neighbourhood $\scrU_\epsilon$.
\end{proof}

\begin{proof}[Proof of Theorem \ref{thm:two}]
This is analogous to the proof of to the proof of Theorem 2 in \cite{HaynMar2020}. Let
\begin{equation}\label{phi-def}
	\Phi^s =
	\begin{pmatrix}
		\e^{-s} & 0 \\ 0 & \e^{s/d}\bm{1}_d
	\end{pmatrix}
	\in G.
\end{equation}
A special case of Dani's correspondence \cite[Theorem 2.20]{Dani1985} states that, for any $\vec{\alpha}\in\R^d$, the orbit
\begin{equation}\label{orbit}
	\bigg\{ \Gamma \begin{pmatrix} 1 & \vec{\alpha} \\ 0 & \bm{1}_d \end{pmatrix} \Phi^s \;\bigg|\; s\in\RR_{\geq 0}\bigg\}
\end{equation}
is bounded in $\GamG$ if and only if $\vec{\alpha}$ is badly approximable by $\Q^d$.

Note that for $\ZZ^d M_0=\ZZ^d M_0$ and $\vec\alpha=\vec\alpha_0 M_0$
\begin{equation}
A_{N}(\vec\alpha,\scrL) = \begin{pmatrix} 1 & \vec{\alpha}_0 \\ 0 & \bm{1}_d  \end{pmatrix} \begin{pmatrix} N^{-1} & 0 \\ 0 & N^{1/d}\bm{1}_d\end{pmatrix} \begin{pmatrix} 1 & 0 \\ 0 & M_0  \end{pmatrix} .
\end{equation}
By assumption $\vec{\alpha}$ is badly approximable by $\QQ\scrL$, i.e., $\vec{\alpha}_0$ is badly approximable by $\QQ^d$. It follows from Dani's correspondence that the set $\{ \Gamma A_{N_+}(\vec\alpha_0 M_0,\ZZ^d M_0) \mid N\in\NN\}$
is contained in a compact subset of $\GamG$. The claim then follows from Proposition \ref{prop:twoB}. 
\end{proof}

We now return to the special case $\scrD=\SS_1^{d-1}$, and provide the remaining ingredients for the proof of Theorem \ref{prop:BestPoss}.

\begin{prop}\label{prop:fiver}
Let $\scrD=\SS_1^{d-1}$, and fix a matrix $A_{N_+}(\vec\alpha)$ as in \eqref{eqn.A_NDef} with $N_+:=N+\tfrac12$, $N\in\NN$. Then the following hold.
\begin{enumerate}[{\rm (i)}]
\item For given $n=1,\ldots,N$, the function $t\mapsto F_\scrD(A_{N_+}(\vec\alpha),t)$ is constant on the interval $I_n=(N_+^{-1}(n-\frac12),N_+^{-1} n]$.
\item $F_\scrD$ is continuous at $(\Gamma A_{N_+}(\vec\alpha),t)\in\GamG\times (0,1)$ if $t,1-t\notin N_+^{-1}\ZZ$.
\end{enumerate}
\end{prop}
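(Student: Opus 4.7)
The plan is to prove (i) and (ii) separately, in both cases by exploiting the explicit arithmetic structure of the lattice $\ZZ^{d+1}A_{N_+}(\vec\alpha)$. A direct computation from \eqref{eqn.A_NDef} yields
\begin{equation}
\ZZ^{d+1}A_{N_+}(\vec\alpha)=\{(k/N_+,\,N_+^{1/d}(k\vec\alpha+\vec\ell'))\mid k\in\ZZ,\,\vec\ell'\in\scrL\},
\end{equation}
so in particular every first coordinate lies in $N_+^{-1}\ZZ$. This arithmetic fact drives both claims.

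For (i), the condition $-t<u<1-t$ in the definition of $\scrQ_\scrD(A_{N_+}(\vec\alpha),t)$ selects integers $k$ satisfying $-tN_+<k<(1-t)N_+$. For $t\in I_n$ we have $tN_+\in(n-\tfrac12,n]$, and the integer solutions are precisely $k\in\{-n+1,\ldots,N-n\}$, a set of $N$ consecutive integers independent of the specific $t\in I_n$. Hence $\scrQ_\scrD(A_{N_+}(\vec\alpha),t)$ is constant as a subset of $\RR^{d+1}$ on $I_n$, and so is its minimum $F_\scrD(A_{N_+}(\vec\alpha),t)$.

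For (ii), I will invoke Proposition \ref{prop:cont}(ii) with $\scrC=\{(\Gamma A_{N_+}(\vec\alpha),t)\}$, reducing the task to exhibiting one $\kappa>F_\scrD(A_{N_+}(\vec\alpha),t)$ for which no non-zero point of $\ZZ^{d+1}A_{N_+}(\vec\alpha)$ meets the boundary of $(-t,1-t)\times(0,\kappa]\SS_1^{d-1}$. This boundary splits into the lateral faces $\{u=-t\}\cup\{u=1-t\}$, the outer sphere $\{|\vec v|=\kappa\}$, and the central axis $\{\vec v=\vec 0\}$. The lateral faces contain no lattice point, since every $u$-coordinate lies in $N_+^{-1}\ZZ$ while by hypothesis $t,1-t\notin N_+^{-1}\ZZ$. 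The outer sphere is avoided by choosing $\kappa$ slightly larger than $F_\scrD(A_{N_+}(\vec\alpha),t)$ but off the discrete set of lengths $|\vec v|$ realised by lattice vectors. Finally, a non-zero lattice point on the central axis would require $k\vec\alpha\in\scrL$ for some non-zero integer $k$; in the generic case $\vec\alpha\notin\QQ\scrL$ this is impossible, leaving only the excluded origin.

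The main obstacle is the central axis in the exceptional case $\vec\alpha\in\QQ\scrL$, when genuine non-zero lattice points $(k_0/N_+,\vec 0)$ can sit on the boundary, so that a small perturbation of $M$ in $\GamG$ could move them to have small non-zero $\vec v$ and violate the minimum. Here I would argue directly: the Kronecker orbit $\{n\vec\alpha+\scrL\}$ is finite, the set $\{F_\scrD(A_{N_+}(\vec\alpha),t)\mid t\in(0,1)\}$ is finite by (i), and one can reduce to the non-degenerate case by a sublattice splitting that separates the rational-axis lattice points from those contributing to the minimum, after which Proposition \ref{prop:cont}(ii) applies to the latter.
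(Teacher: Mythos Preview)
Your argument for (i) is correct and identical to the paper's. For (ii) your strategy also matches the paper's: invoke Proposition~\ref{prop:cont}(ii), decompose the boundary of $(-t,1-t)\times(0,\kappa]\SS_1^{d-1}$, and dispose of each piece. The paper in fact treats only the outer sphere and the lateral faces and never mentions the central axis; you have correctly noticed that this piece needs separate attention, and your treatment of the generic case $\vec\alpha\notin\QQ\scrL$ is fine.

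The genuine gap is your handling of the rational case. The proposed ``sublattice splitting'' is too vague to constitute an argument, and in fact the statement of (ii) \emph{fails} whenever some integer $k$ with $0<|k|\le N$ satisfies $k\vec\alpha\in\scrL$. For a concrete counterexample take $d=1$, $\scrL=\ZZ$, $\vec\alpha=0$, $N=2$, $N_+=\tfrac52$, and $t=\tfrac12$ (note $t,1-t\notin N_+^{-1}\ZZ$). Then $F_\scrD(A_{N_+}(0),\tfrac12)=N_+$, yet for the nearby matrix $M'=\bigl(\begin{smallmatrix}N_+^{-1}&\epsilon\\0&N_+\end{smallmatrix}\bigr)\in\SL(2,\RR)$ the lattice point $(N_+^{-1},\epsilon)$ lies in $\scrQ_\scrD(M',\tfrac12)$ and forces $F_\scrD(M',\tfrac12)\le\epsilon\to0$. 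The axis point $(N_+^{-1},0)$ has become a near-minimiser under perturbation of $M$, which is precisely what no ``splitting'' of the \emph{fixed} lattice can prevent; the same construction works for any $d\ge1$ with $\vec\alpha=\vec 0$.

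The correct resolution is therefore not to argue harder but to add the hypothesis that no integer $0<|k|\le N$ satisfies $k\vec\alpha\in\scrL$. This holds for all $\vec\alpha\notin\QQ\scrL$, and it also holds for every specific rational $\vec\alpha_0$ actually used in the proof of Theorem~\ref{prop:BestPoss} (for instance $\vec\alpha_0=(\tfrac{19}{50},\tfrac{33}{250})$ with $N=9$ has minimal period $250\gg N$). Under this hypothesis the central axis carries no non-zero lattice point with $|u|<1$, your verification of \eqref{exset} is then complete, and the downstream applications are unaffected.
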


\begin{proof}
Throughout this proof set $\scrD=\SS_1^{d-1}$. Then
\begin{multline}
F_\scrD(A_{N_+}(\vec\alpha),t)
= N_+^{1/d} \min\big\{| k\vec\alpha + \vec\ell |>0 \\
\;\big|\;  -N_+ t< k< N_+(1-t),\; k\in\ZZ,\; \vec\ell\in\ZZ^d M_0 \big\} .
\end{multline}
The set $(-N_+ t, N_+(1-t))\cap\ZZ$ is independent of the choice of $t\in I_n$; this proves (i). In view of Proposition \ref{prop:cont}, claim (ii) holds if
\begin{equation}\label{exset000777}
	(\ZZ^{d+1} A_{N_+}(\vec\alpha)\setminus\{0\})
	\cap \partial((-t,1-t) \times (0,\kappa]\scrD) = \emptyset ,
\end{equation}
for a fixed choice of $\kappa>\sup_{t\in(0,1)} F_\scrD(A_{N_+}(\vec\alpha),t)$. 
The set
\begin{equation}\label{MD11}
\tilde\scrM(M)= \big\{| \vec v |>0 \;\big|\;  (u,\vec v)\in\ZZ^{d+1} M,\;  |u|\leq 1 \big\}
\end{equation}
is discrete for every fixed $M\in G$ (cf.~\eqref{MD10}), so clearly we can choose $\kappa\notin\tilde\scrM(A_{N_+}(\vec\alpha))$. This means that the lattice $\ZZ^{d+1} A_{N_+}(\vec\alpha)$ does not intersect the set $[-1,1]\times \kappa\scrD$. Furthermore, by the assumption $t,1-t\notin N_+^{-1}\ZZ$, we have 
\begin{equation}\label{exset000777111}
	(\ZZ^{d+1} A_{N_+}(\vec\alpha)\setminus\{0\})
	\cap (\{-t,1-t\} \times (0,\kappa]\scrD) = \emptyset ,
\end{equation}
which establishes \eqref{exset000777}, and hence completes the proof of claim (ii).
\end{proof}

\begin{proof}[Proof of Theorem \ref{prop:BestPoss}.]
In the following $\scrD=\SS_1^{d-1}$. We fix $N\in\NN$ until the last step of the proof. By \eqref{NN1102} and Proposition \ref{prop:fiver} (i) we have
\begin{equation}\label{NN11023}
\delta_{n,N}(\scrD) = N_+^{-1/d} F_\scrD\big(A_{N_+}(\vec\alpha_0,\scrL_0),t_n \big) ,
\end{equation}
for any $t_n\in I_n$, and hence
\begin{equation}
g_{N}(\vec\alpha_0,\scrL_0)= |\{F_\scrD(A_{N_+}(\vec\alpha_0,\scrL_0),t_n)\mid n=1,\ldots,N\}|  .
\end{equation}
Choose $\delta>0$ sufficiently small so that the elements of the set $\{F_\scrD(A_{N_+}(\vec\alpha_0,\scrL_0),t_n)\mid n=1,\ldots,N\}$ are separated by at least $\delta$. Fix any $t_n\in I_n$ such that $t_n,1-t_n \notin N_+^{-1}\ZZ$.
Proposition \ref{prop:fiver} (ii) implies that $F_\scrD$ is continuous at $(\Gamma A_{N_+}(\vec\alpha_0,\scrL_0),t_n)$, for $n=1,\ldots,N$. 
That is, there exists a neighbourhood $\scrU\subset\GamG$ of the point $\Gamma A_{N_+}(\vec\alpha_0,\scrL_0)$ and an $\eta>0$ such that
\begin{equation}
|F_\scrD(M,t')-F_\scrD(A_{N_+}(\vec\alpha_0,\scrL_0),t_n)|<\delta/2,
\end{equation} 
whenever $\Gamma M\in\scrU$ and $t'\in (t_n-\eta,t_n+\eta)$. For every integer $\tilde N\geq \eta^{-1}$ and $n=1,\ldots,N$ we can find a positive integer $m_n\leq \tilde N$ so that $\frac{m_n}{\tilde N_+} \in (t_n-\eta,t_n+\eta)$. This implies that $\scrG_{\scrD,\tilde N}(M)\geq \scrG_{\scrD}(A_{N_+}(\vec\alpha_0,\scrL_0)) =g_{N}(\vec\alpha_0,\scrL_0)$ for all $\Gamma M\in\scrU$ and $\tilde N\geq \eta^{-1}$. 

We can now conclude the proof as for Theorem \ref{thm:one}. Let $\vec\alpha\in P$. The density of the orbit $\{ \Gamma A_{N_i+\frac12} (\vec\alpha,\scrL) \mid i \in\NN\}$ implies
\begin{equation}
\limsup_{i\rar\infty} g_{N_i}(\vec\alpha,\scrL) \geq g_{N}(\vec\alpha_0,\scrL_0) .
\end{equation}
Theorem \ref{prop:BestPoss} now follows by taking the supremum over $N\in\NN$.
\end{proof}

\vspace{.15in}

{\footnotesize
\noindent
AH: Department of Mathematics, University of Houston,\\
Houston, TX, United States.\\
haynes@math.uh.edu\\

\noindent
JM: School of Mathematics, University of Bristol,\\
Bristol, United Kingdom.\\
j.marklof@bristol.ac.uk
}

\end{document}